\DeclareFontFamily{U}{mathc}{}
\DeclareFontShape{U}{mathc}{m}{it}%
{<->s*[1.03] mathc10}{}
\DeclareMathAlphabet{\customcal}{U}{mathc}{m}{it}
\DeclareMathAlphabet{\mathscr}{LS1}{stixscr}{m}{n}
\theoremstyle{plain}
\newtheorem{thm}{Theorem}[section]
\newtheorem{prop}[thm]{Proposition}
\newtheorem{conj}[thm]{Conjecture}
\newtheorem{lemma}[thm]{Lemma}
\newtheorem{coroll}[thm]{Corollary}
\newtheorem*{thm*}{Theorem}
\theoremstyle{definition}
\newtheorem{defn}[thm]{Definition}
\newtheorem*{defn*}{Definition}
\theoremstyle{remark}
\newtheorem{remark}[thm]{\underline{Remark}}
\DeclareMathOperator{\rk}{rk}
\DeclareMathOperator{\st}{\; | \;}
\newcommand{\ST}{\; \middle| \;}
\newcommand{\abs}[1]{\left\lvert #1 \right\rvert}
\newcommand{\ddl}[1]{\frac{\partial #1}{\partial \lambda}}
\newcommand{\hatcalC}{\widehat{\mathcal{C}}}
\newcommand{\legsch}{\mathcal{L}}
\newcommand{\m}{\mathfrak{m}}
\newcommand{\N}{\mathbb{N}}
\renewcommand{\P}{\mathbb{P}}
\newcommand{\Z}{\mathbb{Z}}
\newcommand{\Q}{\mathbb{Q}}
\newcommand{\R}{\mathbb{R}}
\newcommand{\C}{\mathbb{C}}
\renewcommand{\O}[1][]{\mathscr{O}_#1}
\newcommand\restr[2]{#1 \raisebox{-.5ex}{$|$}_{#2}}
\newcommand{\quotes}[1]
{``#1''}
\numberwithin{equation}{section}
\title{Singular intersections in families of abelian varieties}
\author{Nicola Ottolini}
\address{Dipartimento di Matematica, Università degli Studi di Roma “Tor Vergata”, Rome (Italy)}
\email{ottolini@mat.uniroma2.it }
\email{nicolaotto@outlook.it}
\keywords{Unlikely Intersections, tangency, abelian varieties}
\subjclass{11G10, 11G05, 11G50, 14C17}
\newcounter{TMPenumnbr}
\begin{document}
\maketitle
\vspace{ -4em}
\hspace{6em}
\begin{abstract}
Let $S$ be a smooth irreducible curve defined over $\overline{\Q}$, let $\mathcal{A}$ be an abelian scheme over $S$ and $\mathcal{C}$ a curve inside $\mathcal{A}$, both defined over $\overline{\Q}$.
In this paper we prove that the set of points in which $\mathcal{C}$ intersects proper flat subgroup schemes of $\mathcal{A}$ tangentially is finite.
The crucial case of elliptic curves already follows from a result by Corvaja, Demeio, Masser and Zannier: in this case we provide an alternative proof using the Pila-Zannier method. 
Such a proof may lead to an effective result using an effective point-counting theorem.

This fits in the framework of the so-called problems of unlikely intersections, and can be seen as a variation of the relative Pink conjecture for abelian varieties.
\end{abstract}

\section{Introduction}
\label{Section10}
Let $\lambda:\legsch \to \C \setminus \{0, 1\}$ be the Legendre elliptic scheme,
where for every $\lambda_0 \in \C \setminus \{ 0, 1\}$ the fiber $\legsch_{\lambda_0}$ is an elliptic curve defined by
\begin{equation}
\label{legendre_form_eqn}
Y^2 = X (X - 1)(X - \lambda_0).
\end{equation}
Masser and Zannier \cite{MZ08, MZ10} showed that there are at most finitely
many complex numbers $\lambda_0 \neq 0, 1$ such that the two points
\begin{align*}
(2, \sqrt{2(2-\lambda_0)}),\quad & (3, \sqrt{6(3-\lambda_0)})
\end{align*}
both have finite order on the elliptic curve $\legsch_{\lambda_0}$.
Later, the same authors proved in \cite{MZ12} that one can replace 2 and 3 by any two distinct complex numbers $(\neq 0, 1)$ or even choose
distinct $X$-coordinates $(\neq \lambda)$ defined over an algebraic closure of $\C(\lambda)$, provided that the corresponding points $P_1, P_2$ are not 
identically linearly related, i.e. there are no integers $a_1, a_2 \in \Z$ not both zero such that $a_1 P_1 + a_2 P_2 = O$ identically on $\mathcal{C}$.
This was further generalized by Barroero and Capuano \cite{BarroCapuano16}, who proved 
that, taking an arbitrary number of points defined over $\overline{\Q(\lambda)}$ such that they are not 
identically linearly related,
the set of complex numbers $\lambda_0$ such that they satisfy two independent linear condition with integer coefficient is again finite.

The same authors further generalized this result to 
product of powers of two non-isogenous elliptic schemes \cite{BarroCapuano17} and then to 
general abelian schemes \cite{BarroCapuano20}, also generalizing a result in \cite{MZ20}, where the authors consider only the intersection with the torsion;
the main result is the following. 
\begin{thm}[\cite{BarroCapuano20}, Thm 1.1]
\label{BC16_thm}
Let $S$ be a smooth irreducible curve defined over $\overline{\Q}$.
Consider $\mathcal{A} \to S$ an abelian scheme and $\mathcal{C}$ an
irreducible curve in $\mathcal{A}$ not contained in a proper subgroup scheme of $\mathcal{A}$, even after a finite base change.
Let 
\[
\mathcal{C}^{\{2\} } = \bigcup_{\overset{G \text{ flat}}{\text{codim }G \geq 2} } \mathcal{C} \cap G,
\]
where $G$ runs through all the \emph{flat} subgroup schemes of codimension at least 2, that is, all the subgroup schemes of codimension at least 2 whose irreducible components dominate the base $S$;
then, $\mathcal{C}^{\{2\} }$ is a finite set.
\end{thm}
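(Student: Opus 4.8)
The plan is to run the Pila--Zannier strategy in the relative setting. I begin with standard harmless reductions: after a finite base change and shrinking $S$ --- operations that preserve the hypothesis that $\mathcal C$ lies in no proper subgroup scheme --- I may assume that $\mathcal C$ is smooth and finite over $S$, and that $S(\C)$ is covered by finitely many simply connected definable open sets; it then suffices to prove finiteness of the part of $\mathcal C^{\{2\}}$ lying over a fixed such $B\subseteq S(\C)$. Over $B$ the abelian scheme is analytically a trivial real torus bundle, so one has the \emph{Betti map} $\beta\colon\mathcal A|_B\to\mathbb R^{2g}/\mathbb Z^{2g}$ (with $g$ the relative dimension of $\mathcal A/S$) and a lift of $\mathcal C|_B$ to $\mathbb R^{2g}$. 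By the Peterzil--Starchenko theorem on the definability of the abelian uniformization, the restriction $\beta_{\mathcal C}$ of $\beta$ to $\mathcal C|_B$, together with the associated period data, is definable in the o-minimal structure $\mathbb R_{\mathrm{an,exp}}$.

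Next I translate membership in a flat subgroup scheme of codimension $\geq 2$ into a rational-point condition. If $P\in\mathcal C^{\{2\}}$ lies over $s\in B$ on such a $G$, write $\mathcal B=G^{\circ}$ (possibly after a further base change); passing to the abelian scheme $\mathcal A/\mathcal B$, of relative dimension $\geq 2$, the image of $P$ is torsion, say of order $N$. In Betti coordinates this reads: there is a primitive sublattice $M\subseteq\mathbb Z^{2g}$ of corank $\geq 4$ with $N\beta(P)\in(M\otimes\mathbb R)+\mathbb Z^{2g}$, and projecting modulo $M\otimes\mathbb R$ this says precisely that the image of $\beta(P)$ in $\mathbb R^{2g}/(M\otimes\mathbb R)$ is a rational point of denominator dividing $N$. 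Bundling the algebraic parameter space of primitive integer matrices of corank $\geq 4$ with the definable set $\beta_{\mathcal C}(\mathcal C|_B)$ produces a definable family $\{Z_\mu\}$ with the property that each $P\in\mathcal C^{\{2\}}$ yields a rational point on some $Z_\mu$ of height at most the \emph{complexity} of the minimal codimension-$\geq 2$ flat subgroup scheme through $P$ (a quantity built from $N$ and the size of $\mathcal B$).

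The core of the argument then opposes two estimates. On one hand, the Pila--Wilkie theorem, in its uniform version for definable families, bounds by $O_{\varepsilon}(T^{\varepsilon})$ the number of rational points of height $\leq T$ on $Z_\mu$ lying off any connected positive-dimensional semialgebraic subset, uniformly in $\mu$. On the other hand I need a Galois--height lower bound: there is $\delta>0$ such that, writing $T$ for the minimal complexity of a codimension-$\geq 2$ flat subgroup scheme through $P$, the point $P$ has $\gg T^{\delta}$ conjugates over a fixed number field, all lying in $\mathcal C^{\{2\}}$ with complexity $\ll T$. This combines Masser-type lower bounds for the degree of torsion points on the quotient abelian varieties (uniform along $S$ because the Faltings height of the fibres is bounded on the curve) with the Masser--Wüstholz isogeny theorem, which ensures that only polynomially many subgroup schemes of complexity $\leq T$ occur --- so that an infinite $\mathcal C^{\{2\}}$ cannot be absorbed into finitely many $G$'s without forcing $\mathcal C$ into one of them. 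If $\mathcal C^{\{2\}}$ were infinite, comparing the two estimates (with $\varepsilon$ small relative to $\delta$) would force, for arbitrarily large $T$, infinitely many of the associated rational points onto positive-dimensional semialgebraic subsets of the $Z_\mu$.

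Finally I would close the loop by functional transcendence: a positive-dimensional semialgebraic block inside some $Z_\mu$ pulls back, in the universal cover, to a positive-dimensional algebraic set contained in $\widetilde{\mathcal C}$ intersected with a coset of a rational subspace, and the Ax--Lindemann theorem for abelian schemes (Pila--Zannier, Gao) then forces the corresponding portion of $\mathcal C$ to lie in a flat subgroup scheme, hence $\mathcal C$ itself to be contained in a proper subgroup scheme even after the base changes already performed --- contradicting the hypothesis. Therefore $\mathcal C^{\{2\}}$ is finite. I expect the main obstacles to be, first, setting up the notion of complexity so that the torsion order and the size of the abelian part $\mathcal B$ are simultaneously controlled and correctly matched to the height in the Pila--Wilkie count; and second, the Galois--height lower bound, which needs uniform estimates for points on proper subgroup schemes in a family together with the generic non-degeneracy of $\beta_{\mathcal C}$ --- the latter itself being a consequence of the hypothesis that $\mathcal C$ lies in no subgroup scheme. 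The Ax--Lindemann input is a substantial black box, but it is available in the literature in the generality required here.
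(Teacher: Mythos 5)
You should first be aware that the paper you are looking at does not prove this statement at all: Theorem \ref{BC16_thm} is quoted verbatim from Barroero--Capuano \cite{BarroCapuano20} and is used as a black box (indeed, the author's own main theorem reuses it to dispose of the codimension~$\geq 2$ intersections). So the only meaningful comparison is with the proof in the cited reference, whose skeleton your sketch does resemble: isogeny/base-change reductions, Betti coordinates or abelian logarithms restricted to $\mathcal{C}$, o-minimal point counting, arithmetic lower bounds for Galois orbits, and a functional transcendence input (there André's theorem; in the elliptic analogue of the present paper, Bertrand's) to kill the algebraic part. Note, however, that the reference does not count rational points via Pila--Wilkie blocks plus Ax--Lindemann as you propose, but counts points of a definable surface lying on linear subvarieties with integer coefficients via Habegger--Pila \cite{HabeggerPila}, and it needs the reduction of $\mathcal{A}$ up to isogeny to a product of powers of simple abelian schemes in order to write every flat subgroup scheme inside the kernel of an endomorphism matrix (the analogue of Lemma \ref{SubgroupsAreKer_lemma}); your sketch skips this reduction, which is also what makes the ``complexity'' of a subgroup scheme a manageable integer vector.

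The genuine gap is your ``Galois--height lower bound'' paragraph. The claim that Masser-type degree lower bounds for torsion points on the quotients are ``uniform along $S$ because the Faltings height of the fibres is bounded on the curve'' is false: for a non-isotrivial family the Faltings height of $\mathcal{A}_s$ is unbounded as $s$ runs over the algebraic points of $S$, and the David/Masser-type estimates degrade with the height of $s$ and with the N\'eron--Tate heights of the coordinates of the point. This is exactly why every proof in this circle of ideas must first establish a \emph{bounded height theorem} for the points of $\mathcal{C}^{\{2\}}$ (Silverman's specialization theorem in the torsion case, dedicated bounded-height results in general), and only then invoke Masser's ``small generators of the relation lattice'' to produce, for each of the $\gg d$ conjugates, defining relations with coefficients polynomial in the degree $d$; in your notation, without this step there is no bound on $N$ and on the size of $M$ in terms of $[\Q(P):\Q]$, hence no $\gg T^{\delta}$ conjugate count to play against $O_\varepsilon(T^{\varepsilon})$. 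Two smaller inaccuracies point the same way: the Masser--W\"ustholz isogeny theorem is not what controls the number of flat subgroup schemes of bounded complexity (that count is elementary once subgroups are realized as kernels of endomorphism matrices), and the Ax--Lindemann endgame only puts (a component of) $\mathcal{C}$ inside a \emph{coset} of a proper subgroup scheme, so you still must use the rationality/torsion structure of that coset to contradict the hypothesis, which forbids containment in a subgroup scheme, not in an arbitrary translate. The missing bounded-height input is the essential idea your proposal needs before the rest can be made rigorous.
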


All these results fit in the framework of very general conjectures, formulated independently by Zilber \cite{Zilber2002} and Bombieri, Masser and Zannier \cite{BMZ99} in the toric case
and by Pink \cite{Pink2005} in the more general setting of mixed Shimura varieties, including classical conjectures such as Manin-Mumford, André-Oort, known as the Zilber–Pink conjectures.
Shimura varieties are a class of algebraic varieties 
including abelian varieties, families of abelian varieties, modular curves and moduli spaces of abelian varieties.
Their structure allows us to define a certain (countable) subset of their subvarieties, called “special subvarieties”.
A general description is quite complicated in general, and can be found in \cite{DeligneShimura}. 
In the case of tori and abelian varieties, the special subvarieties are precisely the irreducible components of algebraic subgroups. 
For families of abelian varieties, the situation is more complicated. 
Flat subgroup schemes, that is, the subgroup schemes that are dominant on the base, are special subvarieties, and actually they are the only ones that are dominant on the base. 
However, there are in general also special subvarieties that are not dominant.
For example, in the case of non-isotrivial elliptic schemes, they are the subgroups of the CM fibers.

Zilber-Pink conjectures involve the intersection of a fixed subvariety $Z$ of a Shimura variety $\mathcal{X}$ with the special subvarieties of $\mathcal{X}$ of codimension bigger than the dimension of $Z$.
The prediction is that, under some natural geometric conditions, such intersection is not Zariski dense in $Z$.

The precise statement for curves in an abelian scheme is the following.
\begin{conj}
Let $S$ be a smooth irreducible curve defined over $\C$.
Consider $\mathcal{A} \to S$ an abelian scheme and $\mathcal{C}$ an
irreducible curve in $\mathcal{A}$, defined over $\C$ and not contained in a proper special subvariety of $\mathcal{A}$.
Consider the set
\[
\mathcal{C}^{[ 2 ] } = \bigcup_{\overset{H \subseteq \mathcal{A}}{\text{codim }H \geq 2} } \mathcal{C} \cap H ,
\]
where $H$ runs through all the special subvarieties of $\mathcal{A}$ of codimension at least 2.
Then $\mathcal{C}^{[2]}$ is a finite set.
\end{conj}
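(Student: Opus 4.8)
The plan is to follow the Pila--Zannier strategy, combining o-minimality and point counting with Diophantine height inequalities and lower bounds for the size of Galois orbits, after first splitting the special subvarieties $H$ of codimension $\geq 2$ according to whether or not they dominate the base $S$.

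If $H$ dominates $S$, then by the discussion above $H$ is a flat subgroup scheme; since the hypothesis that $\mathcal{C}$ is not contained in any proper special subvariety of $\mathcal{A}$ — even after a finite base change — in particular forbids $\mathcal{C}$ from lying in a proper subgroup scheme, Theorem \ref{BC16_thm} shows that the union of the sets $\mathcal{C}\cap H$ over all such $H$ is finite. It remains to handle the special subvarieties $H$ whose image in $S$ is a single point $s_0$. By Pink's description such an $H$ lies in the fibre $\mathcal{A}_{s_0}$ and is a torsion coset $t+B$ of a special abelian subvariety $B\subseteq\mathcal{A}_{s_0}$; moreover the projection of $H$ to the moduli space $\mathcal{A}_g$ is a single special point, so $\mathcal{A}_{s_0}$ has complex multiplication. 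Since $\mathcal{C}$ is not contained in a fibre, $\mathcal{C}\cap\mathcal{A}_{s_0}$ is finite for every such $s_0$, and the real issue is uniformity: one must bound the total number of points of $\mathcal{C}$ lying on vertical special subvarieties independently of the a priori infinitely many CM fibres.

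For this vertical contribution I would separate two cases. If $\dim B=0$, then $t+B$ is a single torsion point of a CM fibre, hence a special point of $\mathcal{A}$, and finiteness of the set of special points on the curve $\mathcal{C}$ is the André--Oort conjecture for the mixed Shimura variety $\mathcal{A}$, available in the curve case. If $\dim B\geq 1$, one passes to the universal cover: using the Betti (period) coordinates one represents the locus of points of $\mathcal{C}$ lying on some vertical special subvariety as a piece (close to one) definable in an o-minimal structure such as $\R_{\mathrm{an},\exp}$, in such a way that the arithmetic data $(s_0,B,t)$ correspond to rational points of controlled height. A Silverman--Tate inequality along $\mathcal{C}$ together with a Rémond-type height inequality in the fibres bounds the height of such points by a fixed power of the discriminant of the CM order of $\mathcal{A}_{s_0}$; comparing this upper bound with the Masser--Wüstholz and Tsimerman lower bounds for the Galois orbit of a CM point (and the Gaudron--Rémond bounds for torsion) and invoking the Pila--Wilkie counting theorem forces only finitely many such points to exist.

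The main obstacle is exactly this last step. One has to build a single definable family that simultaneously carries the fibral coordinate, the modulus $s_0$ and the subgroup $B$, so that one application of the counting theorem controls all vertical contributions at once; and one must show that the height inequalities degenerate in a controlled way as $s_0$ approaches the boundary of $S$, so that essential intersection points cannot accumulate along a sequence of CM fibres with discriminant tending to infinity. If instead one grants the relevant case of the Zilber--Pink conjecture for $\mathcal{A}_g$ (which governs how the non-special curve $S\to\mathcal{A}_g$ meets the CM locus and its special subvarieties), or if $\mathcal{A}\to S$ has no CM fibres at all, so that the vertical family is empty, then the statement follows from Theorem \ref{BC16_thm} together with the André--Oort input above.
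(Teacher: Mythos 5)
The statement you are trying to prove is stated in the paper as a \emph{conjecture} (the Zilber--Pink conjecture for curves in abelian schemes over a curve), and the paper offers no proof of it: it explicitly records that it is known only when $\mathcal{A}$ is a fibered power of a single elliptic scheme, and in some cases of products of powers of two non-isogenous elliptic schemes, and that ``the general case, however, is still open.'' So there is no proof in the paper to compare yours against, and your proposal does not close the gap that makes this a conjecture. Your first reduction is fine: the special subvarieties dominating $S$ are components of flat subgroup schemes, and for those Theorem \ref{BC16_thm} of Barroero--Capuano gives finiteness of the codimension $\geq 2$ intersections (modulo the small point that their hypothesis is ``not contained in a proper subgroup scheme even after finite base change,'' which you should check follows from, or be added to, the hypothesis on special subvarieties).

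The genuine gap is exactly the part you flag yourself: the special subvarieties contained in fibres, i.e.\ torsion cosets $t+B$ inside the (a priori infinitely many) CM fibres $\mathcal{A}_{s_0}$. Your treatment of this case is not an argument but a wish list: you need a single definable family carrying simultaneously the base point $s_0$, the abelian subvariety $B$ and the torsion translate $t$, a height upper bound for the corresponding rational/integer parameters that is uniform as $s_0$ ranges over all CM points of $S$ (including near the boundary), and a Galois-orbit lower bound of polynomial strength in the CM discriminant and the torsion order that beats it. None of these ingredients is supplied, and in the generality of an arbitrary abelian scheme they are not available in the literature; indeed your final paragraph concedes the argument only works if one \emph{grants} the relevant case of Zilber--Pink for $\mathcal{A}_g$ or assumes there are no CM fibres at all. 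That conditional step is precisely the open problem, so the proposal is an outline of a known strategy (the one used in the cases cited in the paper, e.g.\ by Barroero and Barroero--Dill for powers of an elliptic scheme) rather than a proof of the conjecture. Even the $\dim B=0$ subcase, which you reduce to Andr\'e--Oort for the mixed Shimura variety $\mathcal{A}$, needs a citation and a verification that $\mathcal{A}$ is of the type covered by the known results, rather than being asserted as ``available in the curve case.''
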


This conjecture is known to hold when $\mathcal{A}$ is a fibered power of an elliptic scheme $\mathcal{E}$, by work of Masser and Zannier \cite{MZ12}, Barroero and Capuano \cite{BarroCapuano16}, Barroero \cite{BarroeroCM} and Barroero and Dill \cite{BarroeroDill}, and in some cases of product of (fibered) powers of two non-isogenous elliptic schemes defined over $\overline{\Q}$, due to Masser and Zannier \cite{MZ14}, Barroero and Capuano \cite{BarroCapuano17} and Ferrigno \cite{Ferrigno24}. The general case, however, is still open.

More details on the Zilber-Pink conjectures on families of abelian varieties may be found in a survey by Capuano \cite{Capuano2023}, and a more in-depth treatment of the topic can be found in books by Zannier \cite{Zannier2012} and Pila \cite{Pila2022}.\\

The geometric idea behind these results is that the intersections we are considering are, for dimensional reasons, expected to be empty.
When this is not the case, we say that we have an \quotes{unlikely intersection}.
The statement of the conjecture can be interpreted as saying that this is indeed a rare phenomenon.

On the other hand, if in the same setting as above one intersects the curve $\mathcal{C}$ with codimension 1 subgroups, then the union of such intersections will be infinite in general.
However, if we restrict ourselves to consider only the points of singular intersection, $\mathcal{C} \cap_{sing} G$, that is the points of $\mathcal{C} \cap G$ where the multiplicity of intersection between $\mathcal{C}$ and a codimension 1 subgroup $G$ is strictly greater then 1, then we may hope to gain back finiteness.
This is because the intersection of a curve with another subvariety of codimension 1 is in general expected to be transverse, that is, not singular.
Geometrically, this corresponds to considering the points where for some $a_1, \dots, a_n$ in the generic endomorphism ring, the locally defined section 
$$a_1 P_1 + \dots + a_n P_n$$
intersects the zero section \emph{tangentially}.

This kind of question has already been investigated, and answered positively, in some cases.
More specifically, the case of curves which are images of sections intersecting tangentially the torsion cosets in a power of an elliptic scheme follows from works of Corvaja, Demejo, Masser and Zannier \cite{CDMZ21}.
Ulmer and Urz{\'u}a \cite{UU20, UU21} give an independent proof of the case of curves that are image of sections in a single power of an elliptic scheme. In the same papers they also show an application to elliptic divisibility sequences.
In a different setting, namely for curves in $\mathbb{G}_m^2$, the question was studied by Marché and Maurin \cite{Maurin23}.
Similar questions have also been studied in the modular setting, where theorems in the same direction have been proven by Spence \cite{Spence2019}, Aslanyan \cite{Aslanyan202} and Ballini \cite[Thm. 6.0.2]{BalliniTesi}, using an Ax-Schanuel type result by Pila and Tsimerman \cite{Pila2016}.

In this paper we extend such result to curves in more general one dimensional families $\mathcal{A} \to S$ of abelian varieties.
To obtain a result analogue to the Zilber-Pink conjectures we want to prove the finiteness of
\[
\mathcal{C}^{[1, sing] } = \bigcup_{\overset{H \text{ special}}{\text{codim }H \geq 1} } \mathcal{C} \cap_{sing} H,
\]
where $H$ runs through the special subvarieties of $\mathcal{A}$, for curves $\mathcal{C}$ that are not contained in any proper special subvariety.
Since by assumption $S$ is a curve, the special subvarieties of $\mathcal{A}$ come in two types: they are either dominant on $S$, in which case they are irreducible components of flat subgroup schemes, or they are contained in some special fibers.
The finiteness of the singular intersections with special fibers is implied by the following statement.
\begin{thm}
\label{singular_intersection_with_fibers}
Let $S$ be a smooth irreducible curve defined over $\C$.
Consider $\mathcal{A} \to S$ be an abelian scheme and let $\mathcal{C}$ be an
irreducible curve in $\mathcal{A}$, not contained in any fiber.
Let 
\[
\mathcal{C}^{\{1, ram\} } = \bigcup_{s \in S } \mathcal{C} \cap_{sing} \mathcal{A}_s.
\]
Then, $\mathcal{C}^{\{1, ram\} }$ is a finite set.
\end{thm}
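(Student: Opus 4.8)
The plan is to reduce the statement to the elementary fact that a separable finite morphism of smooth curves is ramified at only finitely many points. Write $p\colon\mathcal{A}\to S$ for the structure morphism and set $\pi=p|_{\mathcal{C}}\colon\mathcal{C}\to S$. Since $\mathcal{C}$ is irreducible and is not contained in any fiber, $\pi$ is dominant; an abelian scheme being proper over its base, $\pi$ is proper, and as a one-dimensional irreducible closed subscheme not contained in a fiber has finite fibers over $S$, the morphism $\pi$ is finite and surjective. Because $S$ is a smooth curve, each closed point $s\in S$ is an effective Cartier divisor, so $\mathcal{A}_s=p^{*}(s)$ is an effective Cartier divisor on $\mathcal{A}$, with local equation $f$ the pullback of a uniformizer $t$ of $S$ at $s$. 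For $P\in\mathcal{C}\cap\mathcal{A}_s$ the local intersection multiplicity is $i_P(\mathcal{C},\mathcal{A}_s)=\operatorname{length}_{\mathcal{O}_{\mathcal{C},P}}\bigl(\mathcal{O}_{\mathcal{C},P}/(f)\bigr)$, which is finite since $\mathcal{C}$ is integral and $f\neq 0$ on $\mathcal{C}$; by definition $P\in\mathcal{C}^{\{1, ram\}}$ exactly when this length is at least $2$.

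I would first dispose of the singular locus of $\mathcal{C}$: it is finite, so it is enough to control the \emph{smooth} points of $\mathcal{C}$ lying in $\mathcal{C}^{\{1, ram\}}$. Let $\nu\colon\mathcal{C}'\to\mathcal{C}$ be the normalization and put $\tilde\pi=\pi\circ\nu\colon\mathcal{C}'\to S$, a finite morphism of smooth irreducible curves which, everything being defined over $\C$, is separable. If $P\in\mathcal{C}$ is smooth then $\nu^{-1}(P)$ is a single point $P'$, $\mathcal{O}_{\mathcal{C},P}=\mathcal{O}_{\mathcal{C}',P'}$ is a discrete valuation ring, and $i_P(\mathcal{C},\mathcal{A}_{\pi(P)})=\operatorname{ord}_{P'}(\tilde\pi^{*}t)=e_{\tilde\pi}(P')$, the ramification index of $\tilde\pi$ at $P'$. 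Hence a smooth point $P$ of $\mathcal{C}$ lies in $\mathcal{C}^{\{1, ram\}}$ if and only if $\tilde\pi$ ramifies at $P'$.

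It then remains to invoke that the ramification locus of the separable finite morphism $\tilde\pi\colon\mathcal{C}'\to S$ — the support of the ramification divisor, equivalently the support of the torsion sheaf $\Omega_{\mathcal{C}'/S}$ — is a proper closed, hence finite, subset of $\mathcal{C}'$. Therefore
\[
\mathcal{C}^{\{1, ram\}}\subseteq\operatorname{Sing}(\mathcal{C})\;\cup\;\nu\bigl(\operatorname{Supp}\Omega_{\mathcal{C}'/S}\bigr),
\]
a union of two finite sets, which proves the theorem.

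There is no genuine obstacle here: the whole content is the translation of \quotes{singular (tangential) intersection of $\mathcal{C}$ with a fiber} into \quotes{ramification of $\pi$}, after which finiteness is automatic. The only care required is (i) to define the intersection multiplicity without assuming $\mathcal{C}$ smooth — handled by passing to the normalization and treating the finitely many singular points separately — and (ii) to observe that $\mathcal{A}_s$ is a Cartier divisor, so that the intersection number is the expected order of vanishing of $f$. The hypothesis that $\mathcal{C}$ is not contained in a fiber is precisely what makes $\pi$ non-constant, and separability over $\C$ is what prevents the ramification locus from being all of $\mathcal{C}'$.
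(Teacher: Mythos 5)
Your proof is correct, and its overall strategy is the same as the paper's: identify $\mathcal{C}^{\{1,ram\}}$, up to a finite set, with the ramification locus of $\pi|_{\mathcal{C}}\colon\mathcal{C}\to S$, and then use the finiteness of the ramification locus of a finite separable morphism of curves. Where you differ is in how this identification is established and in the treatment of singular points. The paper removes finitely many points so as to assume $\mathcal{C}$ smooth and argues sheaf-theoretically: comparing the conormal sequences of $\mathcal{C}$, of the point $p$, and of the fiber $\mathcal{A}_s$ inside $\Omega_{\mathcal{A}/S}\otimes k(p)$, it shows that the kernel of $\m_p/\m_p^2\otimes k(p)\to\Omega_{\mathcal{A}/S}\otimes k(p)$ is exactly the conormal space of $\mathcal{A}_s$, so that failure of surjectivity of $d_{\mathcal{C},p}$ (equivalently, by Nakayama, ramification of $\pi|_{\mathcal{C}}$ at $p$) is precisely tangency of $\mathcal{C}$ to the fiber. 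You instead realize $\mathcal{A}_s$ as the Cartier divisor pulled back from the point $s$, compute the local intersection multiplicity at a smooth point of $\mathcal{C}$ as $\operatorname{ord}_{P'}(\tilde\pi^{*}t)$ on the normalization, and read off that multiplicity at least $2$ means ramification; the finitely many singular points of $\mathcal{C}$ are absorbed into the exceptional finite set rather than removed. Your route is more elementary and local, and it avoids modifying $\mathcal{C}$; the paper's route makes the tangent-space characterization of the singular intersection explicit, which is the form in which the condition is used elsewhere. The only point worth spelling out in your version is the compatibility of the two notions of singular intersection at a smooth point $P$: the length condition $\operatorname{length}\bigl(\mathcal{O}_{\mathcal{C},P}/(f)\bigr)\geq 2$ is equivalent to $T_P\mathcal{C}\subseteq T_P\mathcal{A}_s$, since $\operatorname{ord}_P(f)\geq 2$ exactly when $df$ annihilates $T_P\mathcal{C}$; this is immediate, so it does not affect the correctness of your argument.
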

The proof of this result is the content of Section \ref{SectionFibers}.
The rest of this paper is then dedicated to prove the finiteness of the singular intersections with flat subgroups, with some requirements on the abelian scheme $\mathcal{A}$.
\begin{thm}
\label{abelian_main_thm}
Let $S$ be a smooth irreducible curve defined over $\overline{\Q}$.
Let us consider $\mathcal{A} \to S$ an abelian scheme 
that has no isotrivial elliptic component, that is we cannot write $\mathcal{A} = E \times \mathcal{A}'$ where $E$ is a constant elliptic curve, even after base change.
Let $\mathcal{C}$ be an
irreducible curve in $\mathcal{A}$, dominant over $S$ and not contained in a proper subgroup scheme of $\mathcal{A}$, even after a finite base change.
Let 
\[
\mathcal{C}^{\{1, sing\} } = \bigcup_{\overset{G \text{ flat}}{\text{codim }G \geq 1} } \mathcal{C} \cap_{sing} G,
\]
where $G$ runs through all the flat subgroup schemes of codimension at least 1.\\
Then, $\mathcal{C}^{\{1, sing\} }$ is a finite set.
\end{thm}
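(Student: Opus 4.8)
\emph{Reductions.}
The plan is to reduce to a Zilber--Pink statement for a curve inside a family of extensions of an elliptic scheme, and then to run the Pila--Zannier method. First, by Theorem~\ref{BC16_thm} applied to $\mathcal{C}$ (which satisfies its hypotheses), the points of $\mathcal{C}$ lying on some flat subgroup scheme of codimension $\ge 2$ form a finite set, so I may restrict to $\mathcal{C}\cap_{sing}G$ with $G$ of codimension exactly $1$; likewise Theorem~\ref{singular_intersection_with_fibers} lets me discard the finitely many points where $\mathcal{C}$ is tangent to a fibre, so that $\mathcal{C}\to S$ is étale at every point in play. A codimension-$1$ flat $G$ is, up to finite base change (allowed) and isogeny, the kernel of $\iota\circ\phi$ with $\phi\colon\mathcal{A}\to\mathcal{E}$ a surjection onto an elliptic scheme in one of the finitely many isogeny classes appearing in $\mathcal{A}$ and $\iota$ an isogeny of $\mathcal{E}$. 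If every elliptic isogeny factor of $\mathcal{A}$ has multiplicity one, there is, for each $\mathcal{E}$, a single such $\phi$ up to $\mathrm{End}^0(\mathcal{E})$; then $\phi|_{\mathcal{C}}$ is a non-torsion section of $\mathcal{E}\times_S\mathcal{C}$ (as $\mathcal{C}$ lies in no proper subgroup scheme), and Silverman's specialization theorem shows it takes torsion values only finitely often, so already the union of \emph{all} codimension-$1$ flat intersections with $\mathcal{C}$ is finite. Hence I may assume that some elliptic factor $E$ occurs with multiplicity $m\ge 2$; projecting $\mathcal{C}$ to the $E$-isotypic part and removing finitely many ramification points, the problem reduces to $\mathcal{A}=E^m$, $E/S$ a \emph{non-isotrivial} elliptic scheme, $\mathcal{C}=(P_1,\dots,P_m)\subset E^m$ dominant over $S$ and in no proper subgroup scheme, and $G=\ker\big(\vec x\mapsto\textstyle\sum_i\alpha_i x_i\big)$ with $\alpha_i\in\mathrm{End}(E)$. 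The hypothesis that $\mathcal{A}$ has no isotrivial elliptic component is used exactly here: it keeps $E$, hence $\mathcal{E}\times_S\mathcal{C}$, non-isotrivial, which is what makes the Betti map non-degenerate below.

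\emph{Reformulation.}
Over a simply connected, relatively compact $U\subset S(\C)$, fix a period frame and the associated real-analytic Betti coordinates $\beta_i\colon\mathcal{C}_U\to\R^2/\Z^2$ of the $P_i$, lifted to $\R^2$, and let $\rho\colon\mathrm{End}(E)\to M_2(\Z)$ be the action on $H_1$. Then $\mathcal{C}$ meets $\ker(\iota\circ\phi_{\vec\alpha})$ singularly at a point $P$ over $t_0$ precisely when
\[
\textstyle\sum_i\rho(\alpha_i)\,\beta_i(t_0)\in\Q^2
\qquad\text{and}\qquad
\textstyle\sum_i\rho(\alpha_i)\,\beta_i'(t_0)=0 ,
\]
where $'$ is $d/dt$ for a local coordinate on $S$: the first, inhomogeneous (``torsion'') condition records $P\in\ker(\iota\circ\phi_{\vec\alpha})$ and the second, homogeneous (``tangency'') condition the coincidence of tangent directions. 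Equivalently, the first jet $j^1(P)$ of $\mathcal{C}$ at $P$ lies on $\ker J^1(\iota\circ\phi_{\vec\alpha})$, a codimension-$2$ subgroup scheme of the jet group $J^1(E^m)$ --- an extension of $E^m$ by $\mathrm{Lie}(E^m/S)$ --- so the statement becomes an unlikely-intersection statement for the curve $j^1(\mathcal{C})$ inside this generalized abelian scheme, of the same formal shape (curve versus codimension $2$) as Theorem~\ref{BC16_thm}.

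\emph{Pila--Zannier inputs.}
For a tangency point $P$ let $T(P)$ be the least value of $\max\{\lVert\vec\alpha\rVert,\deg\iota\}$ realizing it; if $\mathcal{C}^{\{1, sing\}}$ were infinite there would be infinitely many $P$ with $T(P)\to\infty$. \emph{Bounded height:} since $\psi_{\vec\alpha}:=\sum_i\alpha_i P_i$ has Néron--Tate height $\asymp\lVert\vec\alpha\rVert^2$ over $\Q(\mathcal{C})$ while $\iota\circ\psi_{\vec\alpha}$ vanishes at $P$, Tate's uniform comparison of fibral and generic heights forces the Weil height of $s_0=\pi(P)$ to be bounded independently of $P$. \emph{Counting:} through the Betti/jet description each tangency point is a rational point of height $\ll T(P)^{c}$ on a set definable in $\R_{\mathrm{an}}$ --- the image of $U$ under $t\mapsto(\beta_i(t),\beta_i'(t))_i$ together with the finitely many lattice parameters --- so Pila--Wilkie bounds the number of those lying off any positive-dimensional semialgebraic block by $\ll_\varepsilon T^\varepsilon$. \emph{Galois:} the Galois conjugates of $P$ over $\Q(s_0)$ are again tangency points for the same $\vec\alpha,\iota$, and one shows there are $\gg T(P)^{\delta}$ of them --- when $\deg\iota$ dominates, by David--Masser type lower bounds for torsion fields, uniform over base points of bounded height; when $\lVert\vec\alpha\rVert$ dominates, by the arithmetic of the jet group $J^1(E^m)$, the datum $j^1(P)\in\ker J^1(\phi_{\vec\alpha})$ having ``denominator'' of size $\asymp\lVert\vec\alpha\rVert$.

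\emph{Conclusion and main obstacle.}
Comparing these three estimates, for $T(P)$ large the Galois conjugates of $P$ cannot all be absorbed by the Pila--Wilkie bound unless they accumulate on a positive-dimensional semialgebraic block of the definable set; an Ax--Lindemann / Ax--Schanuel analysis of such blocks --- for $J^1(E^m)$, equivalently for the Betti map of $E^m$ together with its first derivative --- shows this can happen only if $\mathcal{C}$ lies in a proper subgroup scheme, contrary to hypothesis. Hence $\mathcal{C}^{\{1, sing\}}$ is finite. The main obstacle is the Galois lower bound in the regime where $\deg\iota$ stays bounded while $\lVert\vec\alpha\rVert\to\infty$: there the usual torsion-field estimates give nothing, and one must extract a Galois bound from the linear (jet) part of the data, uniform over the bounded-height base points, and then pair it with a matching Ax--Lindemann statement for the vectorial extension $J^1(E^m)$; making these two ingredients interlock --- and in particular excluding, via the hypothesis on $\mathcal{C}$, the semialgebraic blocks that Pila--Wilkie leaves behind --- is where the real work lies.
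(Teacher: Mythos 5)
Your reduction contains a false step. In the ``multiplicity one'' case you claim that $\phi|_{\mathcal{C}}$ is a non-torsion section and that ``Silverman's specialization theorem shows it takes torsion values only finitely often, so already the union of all codimension-$1$ flat intersections with $\mathcal{C}$ is finite.'' This is not true: Silverman's theorem gives \emph{bounded height} of such base points, not finiteness, and a non-torsion section of a non-isotrivial elliptic scheme takes torsion values at infinitely many points (of bounded height but unbounded degree) --- this is exactly why the paper counts only \emph{tangential} intersections with codimension-$1$ subgroups, and why the case $n=1$ of Theorem \ref{Main_Thm} (the Corvaja--Demeio--Masser--Zannier / Ulmer--Urz\'ua situation) is already a nontrivial theorem. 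So you cannot discard elliptic factors of multiplicity one; the tangency analysis is needed for every $m\ge 1$ (your later machinery would in principle cover $m=1$ as well, so this particular error is repairable, but the shortcut as stated is wrong).

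More importantly, the two ingredients you yourself flag as ``the real work'' are left unproved, and they are precisely where the paper's proof lives --- and it closes them by a different route. You ask for a large-Galois-orbit lower bound $\gg T(P)^{\delta}$ and admit you cannot obtain it when $\deg\iota$ stays bounded and $\lVert\vec\alpha\rVert\to\infty$. The paper never needs such a bound: instead it bounds the size of the relation from above in terms of the degree $d_0$ of the point, via Masser's small-generators lemma for the full relation lattice $L(\bm{c})$ (Lemma \ref{SmallGenerators_lemma}), the saturation statement $\Q L^{sing}\cap L=L^{sing}$ (Lemma \ref{RelationsNoGap_lemma}), and the codimension-$\ge 2$ theorem of Barroero--Capuano to discard the finitely many points with $\rk L\ge 2$, so that $\rk L^{sing}=\rk L=1$ and the small generator is automatically a tangential relation; then the $\gg d_0$ conjugates all lie in $D(\bm{a})$ with $\abs{\bm{a}}\ll d_0^{\gamma}$, against the o-minimal bound $\abs{D(\bm{a})}\ll_\varepsilon\abs{\bm{a}}^{\varepsilon}$, which bounds $d_0$. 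Likewise, the Ax--Lindemann/Ax--Schanuel statement for the jet group $J^1(E^m)$ that you invoke to exclude semialgebraic blocks is asserted, not supplied; the paper's substitute is concrete: the Habegger--Pila semi-rational counting result (Proposition \ref{ominHPprop}) combined with Bertrand's theorem giving algebraic independence of $z_1,\dots,z_n,\partial z_1/\partial\lambda,\dots,\partial z_n/\partial\lambda$ over $\C(\lambda,f,g,\partial f/\partial\lambda,\partial g/\partial\lambda)$ (Lemma \ref{funcTrascLemma_fixed_point}, including the Wronskian argument handling $\partial g/\partial\lambda$). So while your overall Pila--Zannier skeleton (Betti coordinates together with their $\lambda$-derivatives, definability, bounded height via specialization, point counting) matches the paper's, the proposal as written has one false reduction and leaves open exactly the arithmetic (small generators of the tangential relation lattice) and functional-transcendence inputs that make the proof work.
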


We remark that the statement is expected to be true also without the condition of $\mathcal{A}$ not having isotrivial elliptic component, although our techniques require to exclude that case.
Work is in progress with Ballini and Capuano to prove the same finiteness result for tori and product of (constant) elliptic curves. Such a result, combined with the rest of the proof of Theorem \ref{abelian_main_thm}, would allow us to drop such assumption.

To prove Theorem \ref{abelian_main_thm},
similarly to \cite{BarroCapuano20}, we start by showing that we can substitute $\mathcal{A}$ with an isogenous abelian scheme $\mathcal{A}'$.
Therefore we can assume, up to isogeny, that $\mathcal{A}$ is the product $\prod_i \mathcal{A}_i^{n_i}$ of fibered powers of pairwise non-isogenous simple abelian schemes.
The finiteness of the intersection of $\mathcal{C}$ with the flat subgroups of codimension $\geq2$ is proven in \cite{BarroCapuano20}, so we can restrict ourselves to consider only codimension 1 subgroups.
Those will all be of the shape
\[
H = \mathcal{A}_1^{n_1} \times_S \dots \times_S \mathcal{A}_{i-1}^{n_{i-1}} \times_S H_i \times_S \mathcal{A}_{i+1}^{n_{i+1}} \dots \times_S \mathcal{A}_r^{n_r},
\]
where $H_i < \mathcal{A}_i$ is a codimension 1 subgroup of $\mathcal{A}_i$, and $\mathcal{A}_i$ is a power of an elliptic scheme.
Therefore, it is enough to consider the case where $\mathcal{A}$ is itself a power of an elliptic scheme.
By assumption, $\mathcal{A}$ will be non-isotrivial, and adapting some work of Habegger \cite{Habegger2013} we can reduce to the case where $\mathcal{A}$ is a power of the Legendre scheme.
The full reduction is performed in the third section.\\
We are then left to prove the following statement.
\begin{thm}
\label{Main_Thm}
Let $\lambda: \legsch \to S = \P \setminus \{0, 1, \infty \}$ be the Legendre elliptic scheme with fibers $\legsch_\lambda$, and let $\legsch^n$ be its $n$-th fibered power, with coordinates $P_1, \dots, P_n$. 
For $\bm{a} \in \Z^n$ let $\varphi_{\bm{a}}: \legsch^n \to \legsch$ be the fiberwise linear map given by 
\[
\varphi_{\bm{a}}(P_1, \dots, P_n) = a_1 P_1 + \dots + a_n P_n .
\]
Let $\mathcal{C} \subseteq \legsch^n$ be an irreducible curve defined over $\overline{\Q}$ and not contained in a fiber over $S$.
Assume that, for
every $j = 1, \dots , n$,
there is no 
$\bm{a} = (a_1 , \dots , a_n )\in \Z^n\setminus \{0\}$ such that
\begin{equation}
\varphi_{\bm{a}} =  O\circ\lambda,
\end{equation}
identically on $\mathcal{C}$, where $O$ is the zero section of $\legsch$. 
Then, there are at most finitely many $\bm{c} \in \mathcal{C}$ such that there exists $\bm{a} \in \Z^n\setminus \{0\}$ with
\begin{equation}
\label{linear_cond_eqn}
\varphi_{\bm{a}}(\bm{c}) = O(\lambda(\bm{c}))
\end{equation}
and such that $\mathcal{C}$ and $\ker (\varphi_{\bm{a}})$ intersect tangentially at $\bm{c}$.
\end{thm}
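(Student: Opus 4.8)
The plan is to run a Pila--Zannier argument in which the tangency condition is recorded through the first jet of the Betti map. Assume for contradiction that there are infinitely many $\bm{c}\in\mathcal C$ satisfying \eqref{linear_cond_eqn} with a tangential intersection. By the hypothesis, $P_1,\dots,P_n$ are $\Z$-linearly independent in $\legsch(\overline{\Q}(\mathcal C))$, so their N\'eron--Tate height pairing matrix $R$ over the function field is positive definite; then, by Silverman's theorem on the variation of canonical heights in a family, $\widehat h_{\lambda(\bm c)}(\varphi_{\bm a}(\bm c))\ge\bigl(\mu\,h(\lambda(\bm c))-C\sqrt{h(\lambda(\bm c))+1}\bigr)\lVert\bm a\rVert^{2}$ with $\mu>0$ whenever $\varphi_{\bm a}(\bm c)=O(\lambda(\bm c))$; since the left-hand side is $0$, this forces $h(\lambda(\bm c))$, hence the height of $\bm c$, to be bounded, exactly as in \cite{BarroCapuano20}. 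Passing to a finite \'etale cover of an open part of $\mathcal C$ trivializing the relative period torsor, I would fix a bounded real-analytic set $\Theta\subseteq\mathcal C^{\mathrm{an}}$ containing infinitely many of the bad points (covering $\mathcal C^{\mathrm{an}}$ by finitely many such), on which the elliptic logarithms $\ell_1,\dots,\ell_n$ of $P_1,\dots,P_n$, the periods $\omega_1,\omega_2$ and the period ratio $\tau$ are single-valued holomorphic functions; these, the associated Betti map $\beta=(\beta_1,\dots,\beta_{2n})\colon\Theta\to\R^{2n}$ defined by $\ell_i=\beta_{2i-1}\omega_1+\beta_{2i}\omega_2$, and their first derivatives, are definable in $\R_{\mathrm{an},\exp}$.

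The decisive computation translates tangency into Betti coordinates. Differentiating the relation $\sum_i a_i\ell_i = m_1\omega_1+m_2\omega_2$ at a bad point (with $m_1,m_2$ locally constant integers) and using the $\R$-linear independence of $\omega_1,\omega_2$, one finds that $\bm c\in\Theta$ is a point of tangential intersection with data $\bm a\in\Z^n\setminus\{0\}$ exactly when there are $m_1,m_2\in\Z$ with
\[
\sum_i a_i\beta_{2i-1}(\bm c)=m_1,\qquad \sum_i a_i\beta_{2i}(\bm c)=m_2,\qquad \sum_i a_i\bigl(\partial_{\bm c}\beta_{2i-1}(\bm c)+\tau(\bm c)\,\partial_{\bm c}\beta_{2i}(\bm c)\bigr)=0,
\]
$\partial_{\bm c}$ being the Wirtinger derivative. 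Following the method of Habegger and Pila, from these relations one assembles a set $Z$, definable in $\R_{\mathrm{an},\exp}$, such that every bad $\bm c\in\Theta$ yields a point of $Z$ whose arithmetic part $(\bm a,m_1,m_2)$ is rational of complexity comparable to $\lVert\bm a\rVert$; since $\varphi_{\bm a}$ and the tangency condition are defined over a field of definition $L$ of $\mathcal C$, the entire $\mathrm{Gal}(\overline{\Q}/L)$-orbit of $\bm c$ contributes $\gg[L(\bm c):L]$ such rational points of the same complexity.

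It remains to produce, and then to destroy, a positive-dimensional semialgebraic piece of $Z$. On the one hand, for a fixed $\bm a$ the section $\sigma_{\bm a}:=\sum_i a_i\widetilde P_i\in\legsch(\overline{\Q}(\mathcal C))$ has a zero of order $\ge2$ at every tangential $\bm c$, so the number of such $\bm c$ is at most $\tfrac12(\sigma_{\bm a}\cdot O)+\mathrm{const}\ll\lVert\bm a\rVert^{2}$ by intersection theory on the attached elliptic surface; combining this with a Galois lower bound $[L(\bm c):L]\gg\lVert\bm a\rVert^{\delta}$ for some $\delta>0$ (an arithmetic input of the kind used throughout the Zilber--Pink literature) and applying the Pila--Wilkie counting theorem to $Z$, one obtains a connected semialgebraic curve $Y\subseteq Z$ of positive dimension, whose projection is a real-analytic arc in $\Theta$ along which the jet $(\beta,\partial_{\bm c}\beta)$ satisfies a fixed $\Q$-linear relation as above. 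On the other hand, a functional transcendence statement for the Betti map of $\legsch^n$ \emph{together with its first jet} --- an enhancement of the Ax--Schanuel theorem for the universal family of elliptic curves --- shows that such a $Y$ can only occur when $\mathcal C$ is contained in a translate of a proper subgroup scheme of $\legsch^n$, which (using that $\mathcal C$ is not contained in a fibre, to exclude a vertical $Y$, and the independence hypothesis) contradicts the assumptions. I expect this last step to be the main obstacle: it requires a form of functional transcendence controlling not only the periods and elliptic logarithms but also their first derivatives along $\mathcal C$, and a verification that the arc produced by Pila--Wilkie genuinely forces the subgroup relation. The definability bookkeeping for the jet-Betti map and the Galois lower bound are the remaining technical points.
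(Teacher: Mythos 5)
Your blueprint---bounded height via Silverman specialization, Betti coordinates together with their first $\lambda$-derivatives, o-minimal counting against Galois orbits, and a functional transcendence input---is the same strategy the paper follows, but the sketch has genuine gaps at precisely the two places where the real work happens. First, the arithmetic input. You posit a lower bound $[L(\bm{c}):L]\gg\abs{\bm{a}}^{\delta}$ as a standard ingredient. As stated it fails (if $\bm{a}$ witnesses tangency so does $2\bm{a}$), and even for a minimal witness it is not off-the-shelf: what is really needed is that the lattice of \emph{tangential} relations at $\bm{c}$ has a generator of norm polynomially bounded in the degree. The paper obtains this by combining the height bound and the N\'eron--Tate bound with Masser's small-generators theorem for the full relation lattice (Lemma \ref{SmallGenerators_lemma}), and then transferring it to $L^{sing}(\bm{c})$; this transfer is not formal --- it requires first discarding the finitely many points admitting two independent relations (Theorem 1.1 of \cite{BarroCapuano16}), so that $\rk L=\rk L^{sing}=1$, and then the saturation statement $\Q L^{sing}\cap L=L^{sing}$ (Lemma \ref{RelationsNoGap_lemma}). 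None of this appears in your proposal. Relatedly, your counting step as written does not work: the Galois conjugates of a bad point satisfy relations with the \emph{same} integer vector $\bm{a}$ (and with auxiliary integers that need not vary), so the orbit produces many points of your definable set $Z$ lying over essentially one rational tuple, not ``$\gg[L(\bm{c}):L]$ rational points''. Plain Pila--Wilkie applied to the rational points of $Z$ therefore gives nothing; one must use the semi-rational counting theorem of Habegger and Pila (Proposition \ref{ominHPprop}), which bounds the size of the projection of the set of points of $\Sigma$ admitting a bounded-height rational fibre coordinate, and this is how the paper gets $\abs{D(\bm{a})}\ll_\varepsilon\abs{\bm{a}}^{\varepsilon}$ (Proposition \ref{MainEstProp}); your intersection-theoretic bound $\ll\abs{\bm{a}}^{2}$ is far too weak to close the degree comparison, since the exponent in the small-generator bound is large.

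Second, the functional transcendence. You leave the decisive step as an open ``enhancement of Ax--Schanuel with first jets'' and identify it as the main obstacle, but no new Ax--Schanuel statement is needed: the algebraic independence of $z_1,\dots,z_n,\partial z_1/\partial\lambda,\dots,\partial z_n/\partial\lambda$ over $\C\bigl(\lambda,f,g,\partial f/\partial\lambda,\partial g/\partial\lambda\bigr)$ is Bertrand's theorem \cite{Bertrand1990}, applied after using the hypothesis of the theorem to rule out an identical integer relation on $\mathcal{C}$, together with a Wronskian computation showing $\partial g/\partial\lambda\in\C\bigl(\lambda,f,g,\partial f/\partial\lambda\bigr)$ (Lemma \ref{funcTrascLemma_fixed_point}). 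With this lemma the definable arc produced by the counting theorem is excluded by a direct transcendence-degree count; in particular one never needs to argue that the arc forces $\mathcal{C}$ into a \emph{translate} of a proper subgroup scheme --- which, as your own hypotheses only exclude subgroups through the origin, would not by itself yield the contradiction you claim.
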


This result follows from \cite[Thm. 2.10]{CDMZ21} and the invariance of the problem under base change, proven in Lemma \ref{ab_main_basechange_invariant_lemma}.
However, we provide an independent proof of it following Pila-Zannier method, a strategy introduced by Pila and Zannier in
\cite{PZ08}
to give an alternative proof of the Manin-Mumford conjecture (already proved by Raynaud in \cite{Raynaud83}),
and used in several works in the field.
This method may lead to an effective result for the number of points of singular intersection, using an effective version of Proposition \ref{ominHPprop}, such as \cite[Cor. 4.6]{Binyamini2023}.

To prove finiteness, we prove that $\mathcal{C}^{\{1, sing\}}$ is a set of bounded height and degree, and then conclude by Northcott property.
In particular, the height bound comes from Silverman Specialization Theorem \cite{SilvermanSpec}, while for the degree we compare two estimates for the growth of 
the size of the set of singular intersections of $\mathcal{C}$ with subgroups defined by linear conditions with bounded coefficients.
To get an upper bound, we consider the
elliptic logarithms $z_1, \dots, z_n$ of $P_1,\dots , P_n$ restricted to the curve $\mathcal{C}$, their derivatives with respect to $\lambda$ and the equations
\begin{equation}
\begin{cases}
z_j = u_j f + v_j g ,\\
\ddl{z_j} = \ddl{u_j} f + u_j \ddl{f} +\ddl{v_j} g + v_j \ddl{g} ,
\end{cases}
\end{equation}
for $j = 1, \dots , n$, where $f$ and $g$ are suitably chosen basis elements of the period
lattice of $\legsch_\lambda$. If we consider the coefficients $u_j , v_j, \ddl{u_j},\ddl{v_j} $ as functions of $\lambda$ and restrict
them to a compact set, we obtain a subanalytic surface $\Sigma$ in $\R^{4n}$.
The points where $\mathcal{C}$ has singular intersection with a flat subgroup scheme give rise to two linear relations with integer coefficients, one for the $z_j$s and another one (actually the same) for the derivatives.
Consequently, the points will correspond to points
of $\Sigma$ lying on linear varieties defined by equations whose coefficients are the same as those of the linear (on the elliptic curve) relations that the points satisfy.
Combining an o-minimality result by Habegger and Pila \cite{HabeggerPila} with transcendence of the elliptic logarithms and their derivatives proven by Bertrand \cite{Bertrand1990}, we get that the number of points lying on $\Sigma$ and these linear varieties with coefficients bounded by $T$ grows slower than any power of $T$.\\
On the other side, if a point of degree $d$ lies in $\mathcal{C}^{\{1, sing\}}$, it brings all of its Galois conjugates, and we can prove that all of them need to satisfy a linear equation with coefficients bounded by a constant times a certain power of $d$. Combining this with the upper bound forces the degree to be bounded, concluding the argument.\\

We conclude this introduction with a final remark on Theorem \ref{abelian_main_thm}.
In a similar way to \cite{Maurin23}, the result can be reformulated in an unlikely intersections statement.
Let $\mathcal{X}$ be the dual projectivized tangent bundle of $\mathcal{A}$, that is $\mathcal{X} = \mathbb{P}^*(T\mathcal{A})$. 
This comes with a natural map $\tau$ towards $\mathcal{A}$ whose fibers parametrize lines in the corresponding tangent space.
We will therefore denote by $(\bm{p}, [\Delta])$ the point of $\mathcal{X}_{\bm{p}}$ corresponding to the line $\Delta \subseteq T_{\bm{p}} \mathcal{A}$.
The restriction to the smooth locus of the inclusion $\mathcal{C} \to \mathcal{A}$ lifts to $\tilde{\iota}:\mathcal{C}^{sm} \to \mathcal{X}$ in the following way:
for $\bm{c} \in \mathcal{C}^{sm}$ we have by definition that $T_{\bm{c}}\mathcal{C} \subseteq T_{\bm{c}}\mathcal{A}$ is a line. 
Therefore we define $\tilde{\iota}(\bm{c})= (\bm{c}, [T_{\bm{c}}\mathcal{C}])$, and consider $\tilde{\mathcal{C}}$ as the closure of the image of $\mathcal{C}^{sm}$.
In a similar way, for $H<\mathcal{A}$ a (flat) subgroup scheme we can define $\tilde{H} \subseteq \mathcal{X}$ as 
\[
\tilde{H} = \{(\bm{p}, [\Delta])\st \bm{p} \in H, \Delta \subseteq T_{\bm{p}}H\}.
\]
By this construction, we get that $\tilde{\mathcal{C}}$ is again a curve, while if $H$ has codimension 1 in $\mathcal{A}$, then $\tilde{H}$ has codimension 2 in $\mathcal{X}$.
Theorem \ref{abelian_main_thm} can be restated in this language as an unlikely intersections statement.
\begin{thm}
Let $\mathcal{A} \to S$ be an abelian scheme with no isotrivial elliptic component, that is we cannot write $\mathcal{A} = E \times \mathcal{A}'$ where $E$ is a constant elliptic curve, even after base change. 
Let $\mathcal{C}$ be an irreducible curve in $\mathcal{A}$, dominant over $S$ and not contained in a proper subgroup scheme of $\mathcal{A}$, even after a finite base change, and let $\tilde{\mathcal{C}}$ be as above.
Let 
\[
\tilde{\mathcal{C}}^{[2]} = \bigcup_{\overset{H \text{ flat}}{\text{codim }H \geq 1} } \tilde{\mathcal{C}} \cap \tilde{H},
\]
where $H$ runs through all the flat subgroup schemes of codimension at least 1, and $\tilde{H}$ is constructed from $H$ as above.
Then, $\tilde{\mathcal{C}}^{[2]}$ is a finite set.
\end{thm}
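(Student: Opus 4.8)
The plan is to deduce this reformulation directly from Theorem~\ref{abelian_main_thm}, together with Theorem~\ref{BC16_thm} for the higher-codimension part, by translating the intersection in $\mathcal{X}$ back into a tangency statement in $\mathcal{A}$. As a preliminary I would record two elementary facts. Since everything lives in characteristic zero, every flat subgroup scheme $H<\mathcal{A}$ is smooth over $S$, so $T_{\bm{p}}H$ has the expected dimension at every $\bm{p}\in H$ and $\tilde{H}$ is a genuine subvariety of $\mathcal{X}$: when $\operatorname{codim}_{\mathcal{A}}H=1$ the projection $\tilde{H}\to H$ is a projective bundle with fibres $\mathbb{P}(T_{\bm{p}}H)\cong\mathbb{P}^{\dim\mathcal{A}-2}$, so $\operatorname{codim}_{\mathcal{X}}\tilde{H}=2$, while for $\operatorname{codim}_{\mathcal{A}}H\ge 2$ one still gets $\operatorname{codim}_{\mathcal{X}}\tilde{H}\ge 2$. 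Secondly, the assignment $\bm{c}\mapsto(\bm{c},[T_{\bm{c}}\mathcal{C}])$ is a section of $\tau$ over the smooth locus $\mathcal{C}^{sm}$, so $\tilde{\mathcal{C}}$ is an irreducible curve on which $\tau$ restricts to a morphism that is bijective over $\mathcal{C}^{sm}$ and has finite fibres over the finitely many points of the singular locus $\mathcal{C}^{sing}$; in particular $\tau|_{\tilde{\mathcal{C}}}$ is finite-to-one, hence the preimage of any finite set is finite.

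The main step is the translation. Take a point $(\bm{c},[\Delta])\in\tilde{\mathcal{C}}\cap\tilde{H}$ with $\bm{c}\in\mathcal{C}^{sm}$. Since over $\mathcal{C}^{sm}$ the curve $\tilde{\mathcal{C}}$ is exactly the graph of the section above, we must have $[\Delta]=[T_{\bm{c}}\mathcal{C}]$; and $(\bm{c},[\Delta])\in\tilde{H}$ forces $\bm{c}\in H$ and $T_{\bm{c}}\mathcal{C}\subseteq T_{\bm{c}}H$. If $\operatorname{codim}_{\mathcal{A}}H=1$, then $H$ is a smooth divisor, $T_{\bm{c}}H$ is a hyperplane of $T_{\bm{c}}\mathcal{A}$, and the containment of the tangent line of $\mathcal{C}$ in this hyperplane is exactly the statement that the intersection multiplicity of $\mathcal{C}$ and $H$ at $\bm{c}$ is strictly larger than $1$; thus $\bm{c}\in\mathcal{C}\cap_{sing}H\subseteq\mathcal{C}^{\{1, sing\}}$. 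If instead $\operatorname{codim}_{\mathcal{A}}H\ge 2$, then at least $\bm{c}\in\mathcal{C}\cap H\subseteq\mathcal{C}^{\{2\}}$. The hypotheses here are precisely those of Theorem~\ref{abelian_main_thm}, and imply those of Theorem~\ref{BC16_thm}, so $\mathcal{C}^{\{1, sing\}}$ and $\mathcal{C}^{\{2\}}$ are both finite.

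It remains to put the pieces together. Writing $\tilde{\mathcal{C}}^{[2]}=A\cup B$, where $A$ consists of its points lying over $\mathcal{C}^{sm}$ and $B$ of those lying over $\mathcal{C}^{sing}$, the previous step shows $\tau(A)\subseteq\mathcal{C}^{\{1, sing\}}\cup\mathcal{C}^{\{2\}}$, a finite set, and $B\subseteq(\tau|_{\tilde{\mathcal{C}}})^{-1}(\mathcal{C}^{sing})$, also finite since $\mathcal{C}^{sing}$ is finite; as $\tau|_{\tilde{\mathcal{C}}}$ is finite-to-one, $A$ is finite too, and therefore $\tilde{\mathcal{C}}^{[2]}=A\cup B$ is finite. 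All of the arithmetic content is carried by Theorems~\ref{abelian_main_thm} and~\ref{BC16_thm}, so there is no real obstacle; the only point that deserves some care is checking that the three incarnations of \quotes{non-transverse meeting} --- tangency of $\mathcal{C}$ with $\ker\varphi_{\bm{a}}$ as in Theorem~\ref{Main_Thm}, intersection multiplicity greater than $1$ as in Theorem~\ref{abelian_main_thm}, and $T_{\bm{c}}\mathcal{C}\subseteq T_{\bm{c}}H$ as in the definition of $\tilde{H}$ --- agree at a point where $\mathcal{C}$ is smooth and $H$ has codimension one, which reduces to a local computation with a local equation for $H$ and a local parametrisation of $\mathcal{C}$.
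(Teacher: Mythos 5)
Your proposal is correct and takes essentially the route the paper intends: the paper presents this theorem as a direct reformulation of Theorem~\ref{abelian_main_thm}, and your translation (containment $T_{\bm{c}}\mathcal{C}\subseteq T_{\bm{c}}H$ at a smooth point of $\mathcal{C}$ being equivalent to intersection multiplicity $>1$ for codimension-one $H$, with Theorem~\ref{BC16_thm} absorbing the codimension $\geq 2$ subgroups and finiteness of $\tau|_{\tilde{\mathcal{C}}}$ handling the points over $\mathcal{C}^{sing}$) is exactly the argument the paper leaves implicit.
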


\section{Proof of Theorem \ref{singular_intersection_with_fibers} }
\label{SectionFibers}
We recall the setting. 
Let $S$ be a smooth irreducible curve defined over $\C$;
consider $\pi: \mathcal{A} \to S$ an abelian scheme of relative dimension $g$ and $\mathcal{C}$ an
irreducible curve in $\mathcal{A}$, not contained in any fiber.
Recall that we defined
\[
\mathcal{C}^{\{1, ram\} } = \bigcup_{s \in S } \mathcal{C} \cap_{sing} \mathcal{A}_s.
\]
We want to prove that $\mathcal{C}^{\{1, ram\} }$ is a finite set.
Up to removing finitely many points on $S$ and $\mathcal{C}$ we may assume (and we will) that $\mathcal{C}$ is a smooth curve.

We claim that $\mathcal{C}^{\{1, ram\} }$ is 
the ramification locus of $\restr{\pi}{\mathcal{C}} : \mathcal{C} \to S$.
If this is the case, then the finiteness follows from the finiteness of the ramification locus, \cite[Prop. IV.2.2]{Hartshorne1977}.
Notice that $\restr{\pi}{\mathcal{C}}$ is a finite, separable and surjective map between algebraic curves, so by the same proposition the ramification locus is well defined, and it is the support of the relative differential sheaf $\Omega_{\mathcal{C}/S}$.

Let $\mathcal{I}$ be the sheaf of ideals associated with $\mathcal{C}$ in $\mathcal{A}$.
The immersion $\mathcal{C} \subseteq \mathcal{A}$ induces, by \cite[Prop. II.8.12]{Hartshorne1977}, an exact sequence of sheaves
\[
\frac{\mathcal{I}}{\mathcal{I}^2} \xrightarrow{d_{\mathcal{C}}} \left. \Omega_{\mathcal{A}/S} \right| _\mathcal{C} \to \Omega_{\mathcal{C}/S} \to 0.
\]
By Nakayama's Lemma, the support of $\Omega_{\mathcal{C}/S}$ is given by the closed points $p$ such that $\Omega_{\mathcal{C}/S}\otimes k(p) \neq \{0\}$, where $k(p)$ is the residue field at the point $p$.
Hence, these are the points where the map
\[
d_{\mathcal{C}, p}: \frac{\mathcal{I}}{\mathcal{I}^2}\otimes k(p) \to \left. \Omega_{\mathcal{A}/S} \right| _\mathcal{C} \otimes k(p) = \Omega_{\mathcal{A}/S} \otimes k(p)
\] 
is not surjective.
Since $\mathcal{C}$ is smooth, the dimension of $\frac{\mathcal{I}}{\mathcal{I}^2}\otimes k(p)$, the conormal space of $\mathcal{C}$, as a vector space over $k(p)$ is exactly $\dim(\mathcal{A})-\dim(\mathcal{C}) = g$, by \cite[Thm II.8.17]{Hartshorne1977}.
Moreover, since the $\pi$ is smooth, the dimension of the relative cotangent space $\left. \Omega_{\mathcal{A}/S} \right| _\mathcal{C} \otimes k(p)$ is also $g$, so $d_{\mathcal{C}, p}$ is surjective if and only if it is injective.

The map $d_\mathcal{C}$ is the map induced by $d_{\mathcal{A}/S}:\O{\mathcal{A}} \to \Omega_{\mathcal{A}/S}$, so it quotients via $d_{\m_p}: \m_p / \m_p^2 \to \Omega_{\mathcal{A}/S}$, where $\m_p$ is the (maximal) ideal associated with the closed subvariety $p$ of $\mathcal{A}$.

Let $s = \pi(p)$. We have, by \cite[Prop. II.8.11]{Hartshorne1977}, the exact sequence
\[
\pi^*\Omega_{s/S} \to \Omega_{p/S} \to \Omega_{p/s} \to 0.
\]
Since $s$ is a closed subvariety of $S$, the sheaf $\Omega_{s/S}$ is trivial. Moreover, $\Omega_{p/s}$ is also trivial, since both $p$ and $s$ are $\C$-points.
Hence $\Omega_{p/S}$ is trivial.
Since $p$ is also a closed subvariety of $\mathcal{A}$, we also have the exact sequence
\[
\frac{\m_p}{\m_p^2} \xrightarrow{d_{\m_p}} \left. \Omega_{\mathcal{A}/S} \right| _p \to \Omega_{p/S} \to 0.
\]
Since $\Omega_{p/s}$ is trivial, tensoring with $k(p)$ we have that
$d_{\m_p, p}:\m_p/\m_p^2\otimes k(p) \to \Omega_{\mathcal{A}/S}\otimes k(p)$ is surjective.
Moreover, $\dim_{k(p)} (\m_p/\m_p^2\otimes k(p)) = \dim \mathcal{A} = g+1$, so the kernel of $d_{\m_p, p}$ is one-dimensional.
Therefore, $p\in \mathcal{C}$ is a ramification point for $\restr{\pi}{\mathcal{C}}$ if and only if $\mathcal{I}/\mathcal{I}^2 \otimes k(p) \subseteq \m_p/\m_p^2 \otimes k(p)$ contains the kernel of $d_{\m_p, p}$.

Consider now the fiber $\mathcal{A}_s$ of $\pi$ over $s=\pi(p)$ as a closed subvariety of $\mathcal{A}$ and let $\mathcal{J}$ be the associated sheaf of ideals. We have the exact sequence
\[
\frac{\mathcal{J}}{\mathcal{J}^2} \xrightarrow{d_{s}} \left. \Omega_{\mathcal{A}/S} \right| _{\mathcal{A}_s} \to \Omega_{\mathcal{A}_s/S} \to 0.
\]
Again we tensor by $k(p)$, getting
\[
\frac{\mathcal{J}}{\mathcal{J}^2} \otimes k(p) \xrightarrow{d_{s, p}} \left. \Omega_{\mathcal{A}/S} \right| _{\mathcal{A}_s} \otimes k(p) \to \Omega_{\mathcal{A}_s/S} \otimes k(p) \to 0.
\]
Since $\mathcal{A}_s$ is a smooth fiber, $\Omega_{\mathcal{A}_s/S} \otimes k(p)$ has dimension $g$, so by exactness $d_{s, p}$ is the trivial map, sending everything to zero.
Moreover, we have again that $d_s$ quotients through $\m_p / \m_p^2$. 
We claim that the kernel of $d_{\m_p, p}$ is exactly $\mathcal{J}/\mathcal{J}^2 \otimes k(p) \subseteq \m_p / \m_p^2 \otimes k(p)$.
This is the case, since $\mathcal{J}/\mathcal{J}^2 \otimes k(p)$ is positively dimensional and contained into the kernel of $d_{\m_p, p}$, and we have proven that this kernel is one-dimensional.

Now, recall that $\mathcal{C}$ being tangent to $\mathcal{A}_s$ at $p$ is equivalent to the tangent space of $\mathcal{C}$ being contained in the tangent space of $\mathcal{A}_s$, or, equivalently, that $\mathcal{J}/\mathcal{J}^2 \otimes k(p)$, the conormal space of $\mathcal{A}_s$, is contained in $\mathcal{I}/\mathcal{I}^2 \otimes k(p)$, the conormal space of $\mathcal{C}$.
But, by the previous argument, this is exactly the condition of $d_{\mathcal{C}, p}$ not being injective, hence not surjective, and so $p$ being a ramification point of $\restr{\pi}{\mathcal{C}}$. 

\qed

\section{Reduction to powers of the Legendre scheme}
In this section we will reduce the proof of Theorem \ref{abelian_main_thm} to the case of powers of the Legendre scheme.
Similarly to \cite{BarroCapuano20}, we prove that our problem is invariant under finite base changes and isogenies.
Combining this with Poincaré Reducibility Theorem we reduce to two cases: the first one is the case of subgroups of codimension at least two, which is covered by a theorem of Habegger and Pila \cite{HabeggerPila} in the case of isotrivial components and by the aforementioned result by Barroero and Capuano \cite{BarroCapuano20} for the non-isotrivial components.
The second case is that of codimension one subgroups in powers of an elliptic scheme, which is not isotrivial by assumption.

In the second part of this section we further reduce ourselves to consider powers of the Legendre scheme and flat subgroups defined by a single linear condition with integer coefficients.

This section is inspired by the second sections of \cite{BarroCapuano16} and \cite{BarroCapuano20}, and by \cite{Habegger2013}.\\

For $V, W$ subvarieties of a variety $X$, and $P \in V \cap W$ we denote by $i(P; V, W; X)$ the intersection multiplicity of $V$ and $W$ in $P$.
The main tool we will use in our reduction steps is the following.
\begin{prop}[\cite{Nowak1998}, p.173]
\label{intersection_invariant_etale_morph_prop}
Let $f : X' \to X$ be a dominant morphism between smooth
varieties defined over an algebraically closed field $k$, let $V'$ and $W'$ be two subvarieties
of $X'$ that intersect properly at a closed point $P'$, and let $V := f (V'), W :=
f (W')$ and $ P := f (P')$.
If the morphism $f$ is étale at $P'$, then
\[
i(P'; V', W'; X') = i(P; V, W ; X).
\]
In other words, the intersection multiplicity is invariant under étale morphisms.
\end{prop}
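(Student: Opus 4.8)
The plan is to compute both sides by Serre's $\operatorname{Tor}$-formula and to show that, once the relevant local rings are completed, the two computations become literally the same. Since $X$ is smooth, for subvarieties $V,W$ meeting properly at $P$ one has
\[
i(P;V,W;X)=\sum_{i\ge 0}(-1)^i\,\operatorname{length}_{\mathcal{O}_{X,P}}\operatorname{Tor}_i^{\mathcal{O}_{X,P}}\!\bigl(\mathcal{O}_{X,P}/I_{V,P},\ \mathcal{O}_{X,P}/I_{W,P}\bigr),
\]
and likewise for $i(P';V',W';X')$ with the primed data; each $\operatorname{Tor}$ has finite length precisely because the intersection is proper and the ambient local ring is regular. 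The first step is to observe that this number depends only on the completed local ring $\widehat{\mathcal{O}}_{X,P}$ together with the images of the two ideals inside it: completion is faithfully flat over a Noetherian local ring, so it commutes with $\operatorname{Tor}$, and a module of finite length is unchanged by completion and keeps the same length there. Thus it suffices to identify the triples $(\widehat{\mathcal{O}}_{X,P},\,I_{V,P}\widehat{\mathcal{O}}_{X,P},\,I_{W,P}\widehat{\mathcal{O}}_{X,P})$ and $(\widehat{\mathcal{O}}_{X',P'},\,I_{V',P'}\widehat{\mathcal{O}}_{X',P'},\,I_{W',P'}\widehat{\mathcal{O}}_{X',P'})$.

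The second step uses two standard facts about étale morphisms. First, since $f$ is étale at $P'$ and $k=\overline{k}$, so that $k(P)=k(P')=k$, the induced local homomorphism $\mathcal{O}_{X,P}\to\mathcal{O}_{X',P'}$ becomes an isomorphism on completions $\widehat{\mathcal{O}}_{X,P}\xrightarrow{\ \sim\ }\widehat{\mathcal{O}}_{X',P'}$. Second, étale morphisms are stable under base change and preserve dimension and geometric reducedness; combining this with $f(V')=V$ one checks that, after shrinking $X'$ to a small neighbourhood of $P'$, one has $f^{-1}(V)=V'$ as schemes: the base change $V\times_X X'$ is reduced near $P'$, has the same dimension as the irreducible $V'$ that it contains, hence equals $V'$ once the finitely many other components, which avoid $P'$, are discarded. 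Therefore $\mathcal{O}_{V',P'}=\mathcal{O}_{V,P}\otimes_{\mathcal{O}_{X,P}}\mathcal{O}_{X',P'}$, and upon completing, the isomorphism above carries $I_{V,P}\widehat{\mathcal{O}}_{X,P}$ onto $I_{V',P'}\widehat{\mathcal{O}}_{X',P'}$; the same argument applies to $W$.

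Putting the two steps together, the two triples are identified, hence so are the $\operatorname{Tor}$-lengths and the intersection multiplicities; note also that the intersection is proper at $P'$ if and only if it is proper at $P$, because $f$ preserves the dimensions of all the schemes in sight. The step I expect to be the main obstacle is the scheme-theoretic identification $f^{-1}(V)=V'$ near $P'$ and the resulting transfer of the ideals to the completion: one has to be careful that $V=f(V')$ is genuinely a subvariety (which is automatic when $f$ is finite, the case in all our applications, where $f$ is an isogeny or a finite base change) and that $P'$ is an isolated point of the locus where the spurious components of $V\times_X X'$ could interfere, so that passing to an étale neighbourhood of $P'$ is harmless. Once the local rings are completed, everything else is formal homological algebra.
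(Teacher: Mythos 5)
Your overall route (Serre's $\operatorname{Tor}$-formula, flatness of completion, and the isomorphism $\widehat{\mathcal{O}}_{X,P}\xrightarrow{\ \sim\ }\widehat{\mathcal{O}}_{X',P'}$ induced by a morphism that is étale at a $k$-rational point) is the natural one; the paper itself gives no proof and simply quotes Nowak, so there is nothing to compare beyond correctness, and those steps of yours are sound. The genuine gap sits exactly where you predicted: the scheme-theoretic identification $f^{-1}(V)=V'$ near $P'$. It is not true that the components of $V\times_X X'$ other than $V'$ avoid $P'$: nothing in the hypotheses prevents $V'$ from containing a second point $Q'\neq P'$ with $f(Q')=P$, and then $f^{-1}(V)$ has a second branch through $P'$, namely the one lying over the branch of $V$ at $P$ which is the image of the branch of $V'$ at $Q'$. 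What your completion argument actually proves is the preimage statement $i\bigl(P';f^{-1}(V),f^{-1}(W);X'\bigr)=i(P;V,W;X)$, with the preimages taken locally at $P'$ (they are reduced because $f$ is étale); by additivity of proper intersection multiplicities in each argument this yields in general only the inequality $i(P';V',W';X')\le i(P;V,W;X)$, with equality precisely when $V'$ and $W'$ are the unique components of $f^{-1}(V)$ and $f^{-1}(W)$ passing through $P'$.

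This is not merely a presentational defect: in the image formulation the asserted equality can fail. Take $X=X'=\mathbb{G}_m\times\mathbb{A}^1$ over $k=\overline{k}$ of characteristic different from $2$, let $f(x,y)=(x^2,y)$, which is finite and étale, and set $P'=P=(1,0)$, $W'=\{y=0\}$, $V'=\{y=(x^2-1)(x-2)\}$. Then $i(P';V',W';X')=1$, while $V=f(V')$ is the irreducible curve $(Y+2(X-1))^2=X(X-1)^2$ and $W=f(W')=\{Y=0\}$; restricting to $Y=0$ gives $(X-1)^2(4-X)=0$, so $i(P;V,W;X)=2$. The discrepancy is exactly the second component $\{y=(x^2-1)(-x-2)\}$ of $f^{-1}(V)$, which passes through $P'$ because $(-1,0)\in V'$. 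In particular finiteness of $f$ (your parenthetical remark) only guarantees that $V$ is closed; it does not exclude this phenomenon. The statement should therefore be formulated, proved and applied in the preimage form, i.e. with $V'$ and $W'$ the only components of $f^{-1}(V)$ and $f^{-1}(W)$ through $P'$ (equivalently, as the equality for the full local preimages); with that hypothesis added, your completion argument does close the proof, and it is worth checking Nowak's precise hypotheses and that the invocations of the proposition in the paper, which concern components of preimages under finite étale maps, are made in this corrected form.
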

This will help us show that our problem is preserved by moving back and forward through étale morphisms.\\
We also state a corollary, that will be required in Section \ref{Section:proof_of_main}.
\begin{coroll}
\label{inters_invariant_iso_cor}
Let $f: X \to X$ be an isomorphism between smooth varieties over an algebraically closed field $k$, and let $V$ and $W$ be two subvarieties of $X$ invariant under $f$, that intersect properly at a closed point $P$. Then,
\[
i(P; V, W; X) = i(f(P); V, W; X).
\]
In particular, this holds when $k=\overline{\Q}$, the varieties $X$, $V$ and $W$ are defined over a number field and $f$ is induced by a Galois action.
\end{coroll}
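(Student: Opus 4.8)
The plan is to derive the corollary directly from Proposition \ref{intersection_invariant_etale_morph_prop}, and to handle the Galois statement by a small separate argument.

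For the main assertion I would apply Proposition \ref{intersection_invariant_etale_morph_prop} with $X' = X$, with the morphism being $f$ itself, and with $V' = V$, $W' = W$ and $P' = P$. An isomorphism between smooth varieties is in particular a dominant morphism which is étale at every point, so the hypotheses on $f$ are met; by assumption $V$ and $W$ intersect properly at $P$; and since $V$ and $W$ are invariant under $f$ we have $f(V') = f(V) = V$ and $f(W') = f(W) = W$. The proposition then gives $i(P; V, W; X) = i(f(P); V, W; X)$, which is exactly the claim. Note that one does not need to assume properness of the intersection at $f(P)$ separately: it is implicit in the conclusion, and in any case follows because $f$ restricts to an isomorphism $V \cap W \to V \cap W$, hence preserves local dimensions along the intersection.

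For the last sentence of the statement there is one point requiring care: a number field is not algebraically closed, so one should first pass to $X_{\overline{\Q}}$, $V_{\overline{\Q}}$, $W_{\overline{\Q}}$, and, more importantly, a Galois automorphism $\sigma$ of $\overline{\Q}$ fixing the field of definition does not induce a morphism of $X_{\overline{\Q}}$ over $\overline{\Q}$ but only a semilinear automorphism, so Proposition \ref{intersection_invariant_etale_morph_prop} does not apply verbatim. I would instead argue directly: for $\sigma \in \mathrm{Gal}(\overline{\Q}/F)$, where $F$ is a number field over which $X$, $V$, $W$ are defined, $\sigma$ induces for every closed point $P$ a ring isomorphism $\mathscr{O}_{X,P} \to \mathscr{O}_{X,\sigma(P)}$, and since $V$ and $W$ are defined over $F$ this isomorphism carries the ideal of $V$ at $P$ to the ideal of $V$ at $\sigma(P)$, and likewise for $W$. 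The intersection multiplicity at a point of a proper intersection is a length depending only on these local rings and ideals — for instance via Serre's Tor formula, or via Fulton's construction — hence it is unchanged, giving $i(P; V, W; X) = i(\sigma(P); V, W; X)$, with $V, W$ invariant because they are defined over $F$.

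The argument is essentially formal, and I do not expect a genuine obstacle; the only subtlety, as noted, is that in the Galois case one must invoke the purely local, algebraic nature of the intersection multiplicity rather than Proposition \ref{intersection_invariant_etale_morph_prop} itself, precisely because Galois conjugation is a semilinear and not a $k$-linear automorphism.
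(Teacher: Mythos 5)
Your proof is correct and follows the route the paper intends: the corollary is given as an immediate application of Proposition \ref{intersection_invariant_etale_morph_prop} with $X'=X$, $V'=V$, $W'=W$, $P'=P$, using the invariance of $V$ and $W$ to identify $f(V)=V$ and $f(W)=W$, exactly as you do. Your additional care in the Galois case --- noting that a Galois automorphism is only a semilinear automorphism of $X_{\overline{\Q}}$, hence not literally a morphism of varieties over $\overline{\Q}$, and then falling back on the purely local description of the intersection multiplicity via the induced isomorphisms $\mathscr{O}_{X,P}\to\mathscr{O}_{X,\sigma(P)}$ carrying the ideals of $V$ and $W$ to themselves --- is a legitimate refinement of a point the paper leaves implicit, not a different approach.
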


We also note that, since we are proving a finiteness result, we are always allowed to replace the base curve by a
non-empty open subset, and we will tacitly do so. 
This allows us to pass from an abelian variety
defined over a function field of a curve to the corresponding abelian scheme over (a non-empty open subset of) the curve.\\

We recall our setting and some definitions. Let $\mathcal{A}$ be an abelian scheme  over a smooth irreducible curve $S$,
where everything is defined over $\overline{\Q}$. We call $\pi : \mathcal{A} \to S$ the structural morphism.
A subgroup scheme $G$ of $\mathcal{A}$ is a closed subvariety, possibly reducible, which contains the
image of the zero section $O: S \to \mathcal{A}$, is mapped to itself by the inversion morphism and such that
the image of $G \times_S G$ under the addition morphism is in $G$. A subgroup scheme $G$ is called \emph{flat}
if $\restr{\pi}{G} : G \to S$ is flat. By \cite[Proposition III.9.7]{Hartshorne1977}, since $S$ has dimension 1, this is equivalent to
require that all irreducible components of $G$ dominate the base curve $S$.

\subsection{Reduction to powers of simple abelian schemes}

We want to show that, in order to prove Theorem \ref{abelian_main_thm}, we can perform finite base
changes and isogenies.

\begin{lemma}
\label{ab_main_basechange_invariant_lemma}
Let $\mathcal{C}$ be as in Theorem \ref{abelian_main_thm}. Let $\mathcal{A}' = \mathcal{A} \times_S S'$ for some finite cover $S' \to S$ and
let $f$ be the projection $\mathcal{A}' \to \mathcal{A}$. Then, if the conclusion of Theorem \ref{abelian_main_thm} holds for all irreducible
components of $f^{ -1} (\mathcal{C})$, it holds for $\mathcal{C}$.
\end{lemma}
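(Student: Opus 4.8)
The plan is to reduce singular intersections on $\mathcal{C}$ inside $\mathcal{A}$ to singular intersections on the components of $f^{-1}(\mathcal{C})$ inside $\mathcal{A}'$, by showing that the base-change map $f\colon \mathcal{A}'\to\mathcal{A}$ is étale on a dense open set and that flat subgroup schemes, together with their tangential intersections with $\mathcal{C}$, pull back correctly under $f$. First I would observe that since $\mathcal{C}$ satisfies the hypotheses of Theorem \ref{abelian_main_thm} (dominant over $S$, not contained in any proper subgroup scheme even after finite base change), each irreducible component $\mathcal{C}'_k$ of $f^{-1}(\mathcal{C})$ again satisfies those hypotheses: dominance over $S'$ is clear because $f^{-1}(\mathcal{C})\to\mathcal{C}\to S$ factors through $S'$ and $\mathcal{C}$ dominates $S$; the "not contained in a proper subgroup scheme even after finite base change" condition is preserved precisely because that condition on $\mathcal{C}$ is itself stable under finite base change, so any violation for some $\mathcal{C}'_k$ after a further base change $S''\to S'$ would give a violation for $\mathcal{C}$ after the composite base change $S''\to S$. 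I would also note that $\mathcal{A}'=\mathcal{A}\times_S S'$ has no isotrivial elliptic component whenever $\mathcal{A}$ does not, since the "even after base change" clause in that hypothesis makes it invariant under $S'\to S$. Thus Theorem \ref{abelian_main_thm} is applicable to each $\mathcal{C}'_k$.

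Next I would set up the étaleness. The map $f\colon\mathcal{A}'\to\mathcal{A}$ is the base change along $S'\to S$, which is finite; after removing a finite subset of $S$ (permissible since we prove a finiteness statement, as noted in the excerpt) we may assume $S'\to S$ is finite étale, hence $f$ is finite étale, and in particular étale at every closed point. Now fix a flat subgroup scheme $G\subseteq\mathcal{A}$ of codimension $\geq 1$ and a point $\bm{c}\in\mathcal{C}\cap_{sing}G$. Let $G'=f^{-1}(G)=G\times_S S'$, which is again a flat subgroup scheme of $\mathcal{A}'$ of the same codimension (flatness and the subgroup axioms are preserved by base change, and $\restr{\pi'}{G'}$ is flat because $\restr{\pi}{G}$ is), and pick any $\bm{c}'\in f^{-1}(\bm{c})$. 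Then $\bm{c}'$ lies on some component $\mathcal{C}'_k$ of $f^{-1}(\mathcal{C})$ and on $G'$. Since $\mathcal{C}$ and $G$ intersect properly at $\bm{c}$ (both are proper intersections in the sense that $\operatorname{codim}\mathcal{C}+\operatorname{codim}G$ equals the ambient codimension, as $G$ has codimension $\geq 1$ in the abelian scheme and $\mathcal{C}$ is a curve — one checks the local intersection is zero-dimensional, which holds because $\mathcal{C}$ is not contained in $G$), Proposition \ref{intersection_invariant_etale_morph_prop} applies with $X'=\mathcal{A}'$, $X=\mathcal{A}$, $V'=\mathcal{C}'_k$, $W'=G'$, giving
\[
i(\bm{c}';\mathcal{C}'_k,G';\mathcal{A}')=i(\bm{c};\mathcal{C},G;\mathcal{A})>1.
\]
Hence $\bm{c}'\in(\mathcal{C}'_k)^{\{1,sing\}}$. (Here one should take a moment to check that "tangential intersection" in the statement of Theorem \ref{abelian_main_thm} is equivalent to intersection multiplicity $>1$ at the point, so that the multiplicity comparison is exactly what is needed; this is the definition in play throughout, and the same equivalence appears in the discussion of $\mathcal{C}\cap_{sing}G$ in the introduction.)

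Finally I would assemble the conclusion. The above shows $f^{-1}\bigl(\mathcal{C}^{\{1,sing\}}\bigr)\subseteq\bigcup_k (\mathcal{C}'_k)^{\{1,sing\}}$. By hypothesis each $(\mathcal{C}'_k)^{\{1,sing\}}$ is finite, and there are finitely many components $\mathcal{C}'_k$, so the right-hand side is finite; since $f$ is finite, $f^{-1}\bigl(\mathcal{C}^{\{1,sing\}}\bigr)$ being finite forces $\mathcal{C}^{\{1,sing\}}$ itself to be finite, as $f$ is surjective onto its image and $f^{-1}(\bm{c})\neq\varnothing$ for each $\bm{c}\in\mathcal{C}$ (because $S'\to S$ is surjective and $\mathcal{A}'=\mathcal{A}\times_S S'$). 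This completes the proof. The main obstacle I anticipate is not any single deep step but the bookkeeping: verifying carefully that every hypothesis of Theorem \ref{abelian_main_thm} — dominance, the "no proper subgroup even after base change" condition, and the no-isotrivial-elliptic-component condition — transfers to the components $\mathcal{C}'_k$ over $S'$, and confirming the intersection at $\bm{c}$ is genuinely proper so that Proposition \ref{intersection_invariant_etale_morph_prop} is applicable; the étaleness of $f$ after shrinking $S$ and the preservation of flatness of subgroup schemes under base change are routine.
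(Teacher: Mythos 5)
Your proposal follows essentially the same route as the paper: shrink $S$ so that $S'\to S$ (hence $f$) is finite étale, note that the hypotheses of Theorem \ref{abelian_main_thm} pass to the components of $f^{-1}(\mathcal{C})$, pull back a flat subgroup scheme $G$ to the flat subgroup scheme $f^{-1}(G)$ of the same codimension, and transfer the tangency via Proposition \ref{intersection_invariant_etale_morph_prop} before concluding by finiteness of fibers of $f$. The argument is correct and matches the paper's proof in all essential steps, with only slightly more bookkeeping on the transfer of hypotheses.
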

\begin{proof}
First, we see that $f$ is flat because it is a fibered product of two flat morphisms $\mathcal{A} \to \mathcal{A}$
and $S' \to S$.
Moreover, since $\mathcal{A}$ and $\mathcal{A}'$ have the same dimension, by \cite[Corollary III.9.6]{Hartshorne1977},
it follows that $f$ is quasi-finite, and finite, since it is also proper. 
Up to substituting $S'$ and $S$ with an open dense subset, we can also assume that $S' \to S$ is unramified, and therefore $\mathcal{A}' \to \mathcal{A}$ as well, as it is a fibered product of unramified maps.
Hence, the projection $\mathcal{A}' \to \mathcal{A}$ is étale.
By \cite[Corollary III.9.6]{Hartshorne1977},
we have that if $X \subseteq \mathcal{A}$ is an irreducible variety dominating $S$, as $f$ is finite and flat, each
component of $f^{ -1} (X)$ is a variety of the same dimension dominating $S' $. 
It is clear now that, if
the assumptions of Theorem \ref{abelian_main_thm} hold for $\mathcal{C}$, then they must hold for all components of $f^{ -1} (\mathcal{C})$.
Finally, the preimage of any point of $\mathcal{C}$ lying in a flat subgroup scheme of $\mathcal{A}$ of codimension at
least 1 must lie in a flat subgroup scheme of the same codimension, namely the preimage. Moreover, by Proposition \ref{intersection_invariant_etale_morph_prop}, the tangency condition is also preserved.
\end{proof}

\begin{lemma}
\label{ab_main_isogeny_invariant_lemma}
Let $\mathcal{A}$ and $\mathcal{A}'$ be abelian schemes over the same irreducible curve $S$, let $\eta$ be the generic point of $S$, and let $f_\eta : \mathcal{A}'_\eta \to \mathcal{A}_\eta$
be an isogeny between the generic fibers defined over $k(S)$. Moreover, let $\mathcal{C} \subseteq \mathcal{A}$ be a curve
satisfying the assumptions of Theorem \ref{abelian_main_thm}. Then, if the claim of Theorem \ref{abelian_main_thm} holds for all
irreducible components of $f^{-1} (\mathcal{C})$, it holds for $\mathcal{C}$.
\end{lemma}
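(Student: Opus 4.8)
The plan is to reproduce the argument of Lemma \ref{ab_main_basechange_invariant_lemma}, with the isogeny playing the role of the étale base-change projection. First I would spread out $f_\eta$: since we are allowed to shrink $S$, the isogeny of generic fibres extends to an isogeny $f : \mathcal{A}' \to \mathcal{A}$ of abelian schemes over $S$, which is the map denoted $f$ in the statement. As we are in characteristic zero, $f$ is finite, flat, surjective and étale, so it enjoys all the properties of the map used in Lemma \ref{ab_main_basechange_invariant_lemma}; moreover $\dim \mathcal{A}' = \dim \mathcal{A}$, and by \cite[Corollary III.9.6]{Hartshorne1977} every irreducible component $\mathcal{C}'$ of $f^{-1}(\mathcal{C})$ is a curve dominating $S$ and satisfying $f(\mathcal{C}') = \mathcal{C}$ (its image under $f$ being closed, irreducible and $1$-dimensional).

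Next I would check that the hypotheses of Theorem \ref{abelian_main_thm} survive for each such $\mathcal{C}'$. Dominance over $S$ was just observed. If $\mathcal{C}'$ were contained in a proper subgroup scheme $G'$ of $\mathcal{A}'$, possibly after a finite base change (which only replaces $f$ by its base change, still an isogeny), then $\mathcal{C} = f(\mathcal{C}') \subseteq f(G')$ and $f(G')$ is a subgroup scheme of $\mathcal{A}$ of dimension $\dim G' < \dim \mathcal{A}' = \dim \mathcal{A}$, hence proper, contradicting the hypothesis on $\mathcal{C}$. Lastly, $\mathcal{A}'$ has no isotrivial elliptic component either: this property of $\mathcal{A}$ is invariant under isogeny, being visible on the isogeny decomposition of the generic fibre (equivalently, $\mathcal{A}$ and $\mathcal{A}'$ are isogenous to the same product $\prod_i \mathcal{A}_i^{n_i}$ of powers of pairwise non-isogenous simple abelian schemes, none of which is a constant elliptic curve).

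The substance of the proof is the transfer of the singular-intersection condition, which I would carry out via Proposition \ref{intersection_invariant_etale_morph_prop} exactly as in Lemma \ref{ab_main_basechange_invariant_lemma}. Let $\bm{c} \in \mathcal{C}^{\{1, sing\}}$, witnessed by a flat subgroup scheme $G \subseteq \mathcal{A}$ of codimension $\geq 1$ with $\bm{c} \in \mathcal{C} \cap_{sing} G$. For $G$ of codimension $\geq 2$ there is nothing to do, since $\mathcal{C} \cap_{sing} G \subseteq \mathcal{C} \cap G \subseteq \mathcal{C}^{\{2\}}$ and the latter is finite by Theorem \ref{BC16_thm}; so assume $\operatorname{codim} G = 1$. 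Set $G' := f^{-1}(G)$: as $f$ is a finite flat homomorphism, $G'$ is a flat subgroup scheme of $\mathcal{A}'$ with $\dim G' = \dim G$, hence of codimension $1$, and $f(G') = G$. Choosing any $\bm{c}' \in f^{-1}(\bm{c})$, we get $\bm{c}' \in G' \cap f^{-1}(\mathcal{C})$, so $\bm{c}'$ lies on some component $\mathcal{C}'$ of $f^{-1}(\mathcal{C})$, with $f(\bm{c}') = \bm{c}$ and $f(\mathcal{C}') = \mathcal{C}$. Since $\mathcal{C}' \not\subseteq G'$ by the previous paragraph, $\mathcal{C}'$ and $G'$ intersect properly at $\bm{c}'$, and $f$ is étale at $\bm{c}'$, so Proposition \ref{intersection_invariant_etale_morph_prop} yields
\[
i(\bm{c}'; \mathcal{C}', G'; \mathcal{A}') = i(\bm{c}; \mathcal{C}, G; \mathcal{A}) > 1 ,
\]
whence $\bm{c}' \in (\mathcal{C}')^{\{1, sing\}}$. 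Consequently $\mathcal{C}^{\{1, sing\}} \subseteq \bigcup_{\mathcal{C}'} f\bigl((\mathcal{C}')^{\{1, sing\}}\bigr)$, a finite union of images of finite sets, hence finite. I do not expect a serious obstacle here: the only steps requiring care are that the preimage of a codimension-one flat subgroup scheme under an isogeny is again one, and the isogeny-invariance of the "no isotrivial elliptic component" assumption, after which the transfer of tangency is immediate from the étale invariance of intersection multiplicity, just as in Lemma \ref{ab_main_basechange_invariant_lemma}.
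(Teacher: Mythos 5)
Your proposal is correct and follows essentially the same route as the paper: spread the isogeny out to a finite, flat, fiberwise-isogeny $f$, observe it is étale, and then transfer the hypotheses, the flat subgroup schemes (via preimages) and the tangency condition through Proposition \ref{intersection_invariant_etale_morph_prop}, exactly as in Lemma \ref{ab_main_basechange_invariant_lemma}. The only differences are minor: the paper devotes its proof to justifying étaleness (checking unramifiedness fiberwise via a criterion of Liu together with Mumford's result on isogenies) where you invoke characteristic zero, and your separate treatment of codimension $\geq 2$ subgroups via Theorem \ref{BC16_thm} is an optional extra rather than a necessary step.
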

\begin{proof}
By the proof of \cite[Lemma 2.2]{BarroCapuano20}, we have that $f_\eta$ extends to a map $f: \mathcal{A}' \to \mathcal{A}$ that is finite, flat, and is an isogeny on every fiber.
So, to conclude in the same way as the previous lemma, we only need to show that $f$ is also unramified, and thus étale.\\
Since $f$ is finite and $\mathcal{A}$ and $\mathcal{A}'$ are locally noetherian, by \cite[Chapter 4, Lemma 3.20]{Liu2002} $f$ is unramified if and only if, for every $p \in \mathcal{A}$, the fibers $\mathcal{A}'_p$ are finite.
By the same argument, for any $s \in S$, the map $f_s: \mathcal{A}'_s \to \mathcal{A}_s$ is unramified if and only if for any $p \in \mathcal{A}_s$ the fiber $(\mathcal{A}'_s)_p$ is finite.
By properties of the base change, $(\mathcal{A}'_s)_p = \mathcal{A}'_p$, so it is enough to show that $f_s$ is unramified for every $s \in S$.
This follows from \cite[Section II.7, Corollary 1 of Thm. 4, p.74]{Mumford1988} and the fact that $f$ is fiberwise an isogeny.
\end{proof}

Now we can proceed as in \cite{BarroCapuano20} to show that it is enough to prove Theorem \ref{abelian_main_thm} for products of simple abelian schemes.
Let us consider the generic fiber $\mathcal{A}_\eta$ of $\mathcal{A}$ as an abelian variety defined over $k(S)$. 
Since every abelian variety is isogenous to a product of simple abelian varieties (see
for instance \cite[Corollary A.5.1.8]{Hindry2000}),
there are geometrically simple, pairwise non-isogenous 
abelian varieties $B_1 , \dots , B_m$, such that $\mathcal{A}_\eta$ is isogenous to $A' := \prod _i B_i ^{n_i} $. 
Note that the abelian variety $A'$ and the isogeny might not be defined over $k(S)$, but over a finite extension of it, which is $k(S')$
for some irreducible, non-singular curve $S'$ covering $S$. 
However, by Lemma \ref{ab_main_basechange_invariant_lemma}, we can (and do) assume $S' = S$.
For the same reason, we can assume that the endomorphism ring of $A'$ is defined over $k(S)$.
Up to shrinking $S$, the abelian varieties $B_i$ extend to abelian schemes $\mathcal{B}_i \to S$.
We define $\mathcal{A}'$ as $\prod _i \mathcal{B}_i ^{n_i}$, where the product and the powers are intended as fibered product and fibered powers over $S$. 
Lemma \ref{ab_main_isogeny_invariant_lemma} then allows us to reduce the proof of Theorem \ref{abelian_main_thm} to the case of products of simple abelian schemes.\\

We are now going to describe flat subgroup schemes of $\mathcal{A}$, which is a fibered product $\mathcal{A}_1 \times_S \dots \times_S \mathcal{A}_m$, where $\mathcal{A}_i$ is the $n_i $-th fibered power of $\mathcal{B}_i$ and $\mathcal{B}_1 , \dots , \mathcal{B}_m$ are abelian schemes whose
generic fibers $B_i$ are pairwise non-isogenous geometrically simple abelian varieties.
Moreover, we
let $R_i$ be the endomorphism ring of $B_i$, which we can suppose to be defined over $k(S)$.\\
Fix $i_0 $, with $1 \leq i_0 \leq m$. 
For every $\bm{a} = (a_1 , \dots , a_{n_{i_{0}}} ) \in R_{i_0}^{n_{i_0}} $, we have a morphism $\varphi_{\bm{a}} : \mathcal{A}_{i_0} \to \mathcal{B}_{i_0}$ defined by
\[
\varphi_{\bm{a}} (P_1 , \dots , P_{n_{i_{0}}} ) = a_1 P_1 + \dots + a_{n_{i_0}} P_{n_{i_0}} .
\]
In a similar way, square matrices of size $n_{i_{0}}$ with entries in $R_{i_0}$ will encode endomorphisms of $\mathcal{A}_{i_0}$, 
and $m$-tuples in $\prod_i Mat_{n_i} (R_i)$ will encode endomorphisms of $\mathcal{A}$. These endomorphisms form a ring, which we call $R$.\\
If $\alpha \in R$, the kernel of $\alpha$ is the fibered product of $\alpha: \mathcal{A} \to \mathcal{A}$ and the zero section $S \to \mathcal{A}$. We consider it as a closed subscheme of $\mathcal{A}$. It is quite easy to see that $\ker (\alpha)$ is a subgroup scheme.

Let $g_i$ be the relative dimension of $\mathcal{B}_i$ over $S$.
If $\alpha = (\alpha_1 , \dots , \alpha_m ) \in R$, we define the rank $r(\alpha)$ of $\alpha$ to be
$\sum_i \rk(\alpha_i) g_i$, where $\rk(\alpha_i)$ is the matrix rank of $\alpha_i \in Mat_{n_i} (R_i)$.
In practice, $\ker \alpha$ is a closed subscheme of $\mathcal{A}$ obtained by imposing $\rk(\alpha_i)$ independent equations
on each factor $\mathcal{A}_i$.

\begin{lemma}[\cite{BarroCapuano20}, Lemma 2.3]
\label{SubgroupsAreKer_lemma}
Let $G$ be a flat subgroup scheme of $\mathcal{A}$ of codimension $d$ with $1 \leq d \leq n$.
Then, there exists an $\alpha \in R$ of rank $d$ such that $G \subseteq \ker(\alpha)$.
Moreover, $\ker(\alpha)$ is a flat subgroup scheme of $\mathcal{A}$ of codimension $r(\alpha)$.
\end{lemma}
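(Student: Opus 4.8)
The plan is to reduce everything to the generic fibre $\eta$ of $S$ and there realise $\alpha$ as (a multiple of) a projector onto an isogeny complement of the identity component of $G$. A flat subgroup scheme is the Zariski closure of its generic fibre, and --- after possibly shrinking $S$ --- every endomorphism of $\mathcal{A}_\eta$ defined over $k(S)$ comes from an element of $R$ acting on $\mathcal{A}$; so it suffices to produce the required $\alpha$ over $\eta$. Write $A := \mathcal{A}_\eta = \prod_i B_i^{n_i}$. Since the $B_i$ are geometrically simple, pairwise non-isogenous, with endomorphism rings defined over $k(S)$, one has $\mathrm{Hom}_{k(S)}(B_i^{n_i}, B_j^{n_j}) = 0$ for $i \neq j$, hence $\mathrm{End}_{k(S)}(A) = \prod_i \mathrm{Mat}_{n_i}(R_i) = R$. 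Let $G_\eta \subseteq A$ be the generic fibre of $G$, of codimension $d$, and let $X := G_\eta^0$ be its identity component, an abelian subvariety of $A$ of codimension $d$.

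Next I would invoke Poincaré's complete reducibility theorem over $k(S)$ --- a polarisation of $A$ is available over $k(S)$ --- to obtain an abelian subvariety $Y \subseteq A$, defined over $k(S)$, with $X + Y = A$ and $X \cap Y$ finite. Let $p \colon A \to Q := A/X$ be the quotient, so $\ker p = X$ and $\dim Q = d$, and let $\psi \colon Q \to Y$ be an isogeny quasi-inverse to the isogeny $u \colon Y \hookrightarrow A \to Q$. Set $\alpha_0 := \iota_Y \circ \psi \circ p \in \mathrm{End}_{k(S)}(A) = R$. Then $\ker \alpha_0 = p^{-1}(\ker \psi)$ with $\ker\psi$ finite, so $(\ker\alpha_0)^0 = X$, while $\mathrm{im}(\alpha_0) = Y$ is isogenous to $Q$ and hence has dimension $d$; thus $\ker\alpha_0$ has codimension $d$. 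To capture also the (possibly disconnected) group $G_\eta$, I would replace $\alpha_0$ by $\alpha := \alpha_0 \circ [N]$, where $N$ is the exponent of the finite group $\Phi := G_\eta / X$: then $\ker\alpha = p^{-1}\!\bigl([N]_Q^{-1}(\ker\psi)\bigr) \supseteq p^{-1}(Q[N]) \supseteq G_\eta$, because $p(G_\eta) = \Phi \subseteq Q[N]$. Since $[N] = N \cdot \mathrm{Id}$ is invertible in $R \otimes \Q$, the matrix rank of each component of $\alpha$ equals that of $\alpha_0$, and $\ker\alpha = [N]^{-1}(\ker\alpha_0)$ again has codimension $d$.

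It then remains to identify $r(\alpha)$ with $d$. For any $\beta = (\beta_i) \in R$ one has $\ker\beta = \prod_i \ker\bigl(\beta_i \colon B_i^{n_i} \to B_i^{n_i}\bigr)$, and since $B_i^{n_i}$ is isotypic of type $B_i$, the image of $\beta_i$ is isogenous to $B_i^{\rk(\beta_i)}$, where $\rk(\beta_i)$ is the matrix rank over the division algebra $R_i \otimes \Q$; hence $\dim \ker\beta_i = (n_i - \rk(\beta_i)) g_i$ and $\mathrm{codim}_A \ker\beta = \sum_i \rk(\beta_i) g_i = r(\beta)$. Applying this to $\beta = \alpha$ and comparing with the codimension $d$ found above gives $r(\alpha) = d$. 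Finally, $\ker\alpha \subseteq \mathcal{A}$ is a subgroup scheme; removing from $S$ the finitely many points over which the rank of $\alpha$ drops --- any irreducible component not dominating $S$ would be contained in such a fibre --- makes $\ker\alpha$ flat over $S$ of relative codimension $r(\alpha) = d$, i.e.\ of codimension $d$ in $\mathcal{A}$, which completes the argument.

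The step I expect to require the most care is the last identification $r(\alpha) = d$: it rests on the isotypic decomposition $A = \prod_i B_i^{n_i}$ and on controlling the matrix rank of the components $\alpha_i$ over the division algebras $R_i \otimes \Q$, so that the geometric codimension of $G$ is matched by the algebraic quantity $\sum_i \rk(\alpha_i) g_i$. The other technical point is keeping $Y$, $\psi$, and the endomorphism ring defined over $k(S)$ itself rather than over an algebraic closure.
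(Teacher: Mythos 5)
Your construction of $\alpha$ is correct and is essentially the argument of the cited source (the paper itself gives no proof, quoting \cite{BarroCapuano20}, Lemma 2.3): passing to the generic fibre, identifying $\mathrm{End}_{k(S)}(\mathcal{A}_\eta)$ with $R$, taking $X=G_\eta^0$, a Poincar\'e complement $Y$ over $k(S)$, the endomorphism $\alpha_0=\iota_Y\circ\psi\circ p$, composing with $[N]$ for $N$ the exponent of $G_\eta/X$ so that $G_\eta\subseteq\ker\alpha$, and the isotypic dimension count $\dim\ker\beta_i=(n_i-\rk(\beta_i))g_i$ identifying $r(\alpha)=d$ are all sound, including the care you take to keep $Y$, $\psi$ and the endomorphisms defined over $k(S)$.

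The genuine gap is in the final clause, the flatness of $\ker\alpha$ over $S$, which in this paper means that no irreducible component of $\ker\alpha$ lies in a fibre $\mathcal{A}_{s_0}$. Your justification conflates two different things: a vertical component $Z$ over $s_0$ is a component of $\ker(\alpha_{s_0})$, hence a coset of $\ker(\alpha_{s_0})^0$, and it can perfectly well occur over a point where the fibre kernel has the expected dimension $g-r(\alpha)$, so ``any component not dominating $S$ is contained in a fibre where the rank drops'' is unjustified; deleting rank-drop fibres does not address it. What actually has to be proved is, first, that $\dim\ker(\alpha_s)=g-r(\alpha)$ for \emph{every} $s$ (this uses that nonzero elements of $R_i$ extend to $\mathcal{B}_i$ over $S$ and remain isogenies on every fibre, via a factorisation $\beta\delta=[N]$, so the reduction over $R_i\otimes\Q$ works fibrewise), and second, an argument excluding vertical components altogether, e.g.\ factoring $\alpha$ through its image $\mathcal{A}''=\alpha(\mathcal{A})$, which is flat over $S$ with fibres of dimension $r(\alpha)$, so that the zero section is locally cut out in $\mathcal{A}''$ by $r(\alpha)$ equations and every component of $\ker\alpha$ has dimension at least $g+1-r(\alpha)>g-r(\alpha)$ (alternatively, constancy of the component group of $\ker(\alpha_s)$ via the local system $H_1$). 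Moreover, your fallback of removing finitely many points of $S$ is not available here: the deleted set depends on $\alpha$, and the lemma must hold simultaneously for the infinitely many $\alpha$ arising as $G$ varies, so these deletions cannot be absorbed into the paper's one-time convention of shrinking $S$; the statement, as used later to replace an arbitrary flat subgroup scheme by a flat kernel of product form, needs flatness over all of $S$.
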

From this lemma, we can deduce that each flat subgroup scheme of $\mathcal{A}$ is contained in a flat
subgroup scheme of the same dimension and of the form
\[ 
G = G_1 \times_S \dots \times_S G_m , 
\]
where, for every $i = 1, \dots , m$, $G_i$ is a flat subgroup scheme of $\mathcal{A}_i $.\\
This implies that any flat subgroup scheme of $\mathcal{A}$ of
codimension at least 1 is contained in a $G = G_1 \times_S \dots \times_S G_m$ of codimension at least 1.
Moreover, up to enlarging $G$, and to possibly increasing its dimension, we can assume that all $G_i = \mathcal{A}_i$ except for one index $i_0$, and that $G_{i_0}$ is a maximal flat subgroup of $\mathcal{A}_{i_0}$,
In this way, $G_{i_0}$, and therefore $G$, has codimension exactly $g_{i_0}$.
By projecting on the factors, we only need to prove Theorem \ref{abelian_main_thm} in the case in which $m=1$, that is, $\mathcal{A}$ is simple.

Since by assumption our abelian scheme does not contain any isotrivial elliptic factor, the case $\mathcal{A}$ simple can be further divided in these three cases:
\begin{enumerate}
\item $g_1 \geq 2$ and $\mathcal{A}$ is isotrivial;
\item $g_1 \geq 2$ and $\mathcal{A}$ is not isotrivial;
\item $g_1 = 1$ and $\mathcal{A}$ is not isotrivial.
\end{enumerate}

In the first two cases, $G$ needs to have codimension $\ge 2$, so the claim follows from \cite{HabeggerPila} in the isotrivial case and from \cite{BarroCapuano20} in the non isotrivial one.
Therefore, we are only left to prove the third case.

We point out that, in the case that $\mathcal{A}$ is an isotrivial family of elliptic curves, we still believe the conclusion to be true, although our method does not apply directly.
If we are intersecting with $G$ of codimension $\ge 2$, this follows from \cite{HabeggerPila} and from previous works of Rémond and Viada \cite{Remond2003}, Viada \cite{Viada2008} and Galateau \cite{Galateau2010}. 
The case of $G$ of codimension 1, however, is still open, and is part of a work in progress, using different methods, with Ballini and Capuano.

\subsection{Reduction to the Legendre scheme}

In the last subsection we showed that, to prove Theorem \ref{abelian_main_thm}, it is enough to prove the following special case:
\begin{thm}
\label{Geometric_Main_Thm}
Let $S$ be an irreducible and smooth curve and let $\mathcal{E} \to S$ be a non isotrivial elliptic scheme over $S$ both defined over $\overline{\Q}$.
Let $\mathcal{A} = \mathcal{E}^n$ be the $n$-th fibered power of $\mathcal{E}$ over $S$,
and let $\mathcal{C}$ be an irreducible curve in $\mathcal{A}$ defined over $\overline{\Q}$ and not contained in any proper subgroup schemes of $\mathcal{A}$.
Let 
\[
\mathcal{C}^{\{1, sing\} } = \bigcup_{\overset{G < \mathcal{A} \text{ flat}}{\text{codim }G \geq 1} } \mathcal{C} \cap_{sing} G.
\]
Then, $\mathcal{C}^{\{ 1, sing\} }$ is a finite set.
\end{thm}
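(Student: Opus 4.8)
\textbf{Proof proposal for Theorem \ref{Geometric_Main_Thm}.}
The plan is to reduce further to the Legendre scheme $\legsch$ and then invoke Theorem \ref{Main_Thm}. First I would use the fact that any non-isotrivial elliptic scheme $\mathcal{E} \to S$ over $\overline{\Q}$ acquires, after a finite base change $S' \to S$ (which is harmless by an argument identical to Lemma \ref{ab_main_basechange_invariant_lemma}, using Proposition \ref{intersection_invariant_etale_morph_prop} to preserve tangency), a level structure; concretely, its $j$-invariant gives a non-constant map $S' \to \mathbb{A}^1_j$, and pulling back the Legendre family along a suitable cover one obtains a finite morphism $S'' \to \P \setminus \{0,1,\infty\}$ such that $\mathcal{E} \times_S S''$ is isomorphic, as an elliptic scheme, to the pullback of $\legsch$. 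This is exactly the kind of reduction carried out by Habegger in \cite{Habegger2013}, and the point is that over $\P\setminus\{0,1,\infty\}$, after a finite base change, every non-isotrivial elliptic scheme becomes a twist of, and then isomorphic to, (a base change of) the Legendre scheme. One must be slightly careful that the isomorphism is only fiberwise a priori and defined over $k(S'')$, but shrinking $S''$ lets it extend to an isomorphism of elliptic schemes.

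Next I would transport the curve. Given the isomorphism $\Psi: \mathcal{E} \times_S S'' \xrightarrow{\ \sim\ } \legsch^{(S'')}$ of elliptic schemes over $S''$, taking $n$-th fibered powers gives $\Psi^n: (\mathcal{E}\times_S S'')^n \xrightarrow{\ \sim\ } (\legsch^{(S'')})^n$, and after one more harmless base change (to land on $\P\setminus\{0,1,\infty\}$ itself) we may regard the image $\mathcal{C}''$ of each irreducible component of the pullback of $\mathcal{C}$ as an irreducible curve in $\legsch^n$. Flat subgroup schemes and the tangency condition are preserved: an isomorphism of abelian schemes carries flat subgroup schemes to flat subgroup schemes of the same codimension, and by Proposition \ref{intersection_invariant_etale_morph_prop} (an isomorphism is étale) the intersection multiplicity at each point is unchanged, so singular intersections go to singular intersections. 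The hypothesis that $\mathcal{C}$ is not contained in a proper subgroup scheme (even after finite base change) is likewise preserved, and is precisely what guarantees the non-degeneracy hypothesis of Theorem \ref{Main_Thm}: if some $\varphi_{\bm a}$ vanished identically on $\mathcal{C}''$ for $\bm a \neq 0$, then $\mathcal{C}''$, hence $\mathcal{C}$ after base change, would lie in $\ker(\varphi_{\bm a})$, a proper subgroup scheme.

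It then remains to match the two descriptions of codimension-one flat subgroups. By Lemma \ref{SubgroupsAreKer_lemma} applied with $m=1$ and $\mathcal{B}_1 = \legsch$ (so $R_1 = \Z$), every flat subgroup scheme of $\legsch^n$ of codimension $\geq 1$ is contained in $\ker(\alpha)$ for some $\alpha \in \mathrm{Mat}_n(\Z)$ of positive rank, and enlarging as in the previous subsection we may take it of the form $\ker(\varphi_{\bm a})$ for a single $\bm a \in \Z^n \setminus \{0\}$. Since a singular intersection of $\mathcal{C}''$ with a subgroup $G$ persists, with multiplicity at least as large, as a singular intersection with any flat $G' \supseteq G$ of the same dimension through the same point, the finiteness of singular intersections with all such $\ker(\varphi_{\bm a})$ — which is exactly Theorem \ref{Main_Thm} — implies the finiteness of $\mathcal{C}''^{\{1,sing\}}$, and transporting back along $\Psi^n$ and the finite base changes (each contributing only finitely many new points, and each preserving the tangency locus by Lemma \ref{ab_main_basechange_invariant_lemma}) gives the finiteness of $\mathcal{C}^{\{1,sing\}}$.

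\textbf{Main obstacle.} The delicate step is the reduction to Legendre form itself: ensuring that after a finite base change the non-isotrivial $\mathcal{E}$ really becomes isomorphic as an elliptic scheme — not merely isogenous or quadratic-twist-equivalent — to a base change of $\legsch$, and that this can be arranged over $\overline{\Q}$ with the base again an open subset of $\P\setminus\{0,1,\infty\}$ (or a cover thereof, which one then handles by the base-change lemma). Getting the twist to trivialize requires adjoining the square root of the relevant quadratic twisting character, which is again just a finite cover; the bookkeeping that all this is compatible with the $n$-fold fibered power and with the curve $\mathcal{C}$ is routine but is where the care is needed. This is precisely the content adapted from \cite{Habegger2013}, and once it is in place the rest is a formal consequence of Theorem \ref{Main_Thm} and Lemma \ref{SubgroupsAreKer_lemma}.
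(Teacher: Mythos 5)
Your proposal is correct and follows essentially the same route as the paper: a Habegger-style reduction to the Legendre scheme, transport of the curve and of flat subgroup schemes with tangency preserved via Proposition \ref{intersection_invariant_etale_morph_prop} (discarding the finitely many points over ramification/branch loci), and then Lemma \ref{SubgroupsAreKer_lemma} together with Theorem \ref{Main_Thm}. The only presentational difference is that the paper does not trivialize the quadratic twist to get an actual isomorphism with a pullback of $\legsch$, but instead uses Lemma \ref{habegger_diagram_to_Legendre_lemma} (quasi-finite flat maps $e,f$ that are fiberwise isomorphisms) and Lemma \ref{habegger_preserves_info_lemma}; the substance of the argument is the same.
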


We will show that this is implied by Theorem \ref{Main_Thm}.
To do so, we will need to use some results from a work of Habegger \cite{Habegger2013}.\\
Consider the Legendre family defined by 
\[
Y^2 = X (X - 1)(X - \lambda).
\]
This gives an example of a non isotrivial
elliptic scheme, which we call the Legendre elliptic scheme and denote by $\legsch$, over the modular curve $Y (2) = \mathbb{P}^1 \setminus \{0, 1, \infty\}$.
We write $\mathcal{A} _L$ for the $n$-fold fibered power of $\legsch$ and $\pi_L$ for the structural morphism $\pi_L : \mathcal{A}_L \to Y(2)$.

\begin{lemma}[{\cite[Lemma 5.4]{Habegger2013}}]
\label{habegger_diagram_to_Legendre_lemma}
Let $\mathcal{A}$ be as above. After possibly replacing $S$ by a Zariski open, nonempty subset, there exists an irreducible, non-singular,
 curve $S'$ defined over $\overline{\Q}$ such that we have a commutative diagram
\[\begin{tikzcd}
	{\mathcal{A}} & {\mathcal{A}'} & {\mathcal{A}_L} \\
	S & {S'} & {Y(2)}
	\arrow["f"', from=1-2, to=1-1]
	\arrow["l", from=2-2, to=2-1]
	\arrow["{\lambda}"', from=2-2, to=2-3]
	\arrow["e", from=1-2, to=1-3]
	\arrow["{\pi}"', from=1-1, to=2-1]
	\arrow["{\pi_L}", from=1-3, to=2-3]
	\arrow[from=1-2, to=2-2]
\end{tikzcd}\]
where $l$ is finite, $\lambda$ is quasi-finite, $\mathcal{A}'$ is the abelian scheme $\mathcal{A} \times_S S'$, $f$ is finite and
flat and $e$ is quasi-finite and flat. Moreover, the restriction of $f$ and $e$ to any fiber of
$\mathcal{A}' \to S'$ is an isomorphism of abelian varieties.
\end{lemma}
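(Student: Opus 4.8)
The plan is to match $\mathcal{E}$ with the Legendre scheme first at the level of $j$-invariants, by a finite base change, and then to kill the residual quadratic twist by a further quadratic base change. Since $\mathcal{E}$ is non-isotrivial, its $j$-invariant is a non-constant morphism $j_{\mathcal{E}} : S \to \mathbb{A}^1$, and in particular it attains each of the values $0$ and $1728$ at most finitely often; removing those finitely many points from $S$, we may assume $j_{\mathcal{E}}$ factors through $Y(1)' := \mathbb{A}^1 \setminus \{0,1728\}$. The assignment $\lambda \mapsto j(\lambda) = 2^8 (\lambda^2-\lambda+1)^3 \big/ \bigl(\lambda^2(\lambda-1)^2\bigr)$ is a finite morphism $Y(2) \to Y(1)$ of degree $6$; let $Y(2)'$ be the preimage of $Y(1)'$, a cofinite subset of $Y(2)$ over which $\legsch$ has $j$-invariant never equal to $0$ or $1728$.

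First I would form the fibre product $\widetilde{S} := S \times_{Y(1)} Y(2)'$ along $j_{\mathcal{E}}$ and $\lambda \mapsto j(\lambda)$, pass to an irreducible component dominating $S$, and normalize it. This yields a smooth irreducible curve over $\overline{\Q}$ carrying a finite map $l_0 : \widetilde{S} \to S$ (base change of the finite cover $Y(2) \to Y(1)$) and a quasi-finite map $\lambda_0 : \widetilde{S} \to Y(2)$ (base change of $j_{\mathcal{E}}$), such that the elliptic schemes $l_0^*\mathcal{E}$ and $\lambda_0^*\legsch$ have equal $j$-invariant function on $\widetilde{S}$, with values avoiding $0$ and $1728$. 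Hence they are quadratic twists of one another: there is $D \in k(\widetilde{S})^\times$, unique modulo squares, with $l_0^*\mathcal{E} \cong (\lambda_0^*\legsch)^{(D)}$ over $k(\widetilde{S})$. Let $S'$ be an irreducible component of the normalization of $\widetilde{S}$ in $k(\widetilde{S})(\sqrt{D})$ (this step being vacuous when $D$ is already a square), and write $l : S' \to S$ and $\lambda : S' \to Y(2)$ for the composite maps, which are respectively finite and quasi-finite. Over $k(S')$ the class of $D$ becomes trivial, so $l^*\mathcal{E} \cong \lambda^*\legsch$ over $k(S')$; since both sides are abelian schemes over the smooth curve $S'$, this extends (after possibly shrinking $S$, hence $S'$) to an isomorphism of elliptic schemes over all of $S'$, and therefore, taking $n$-th fibred powers, to an isomorphism $\mathcal{A}' := \mathcal{A}\times_S S' \;\cong\; \lambda^*\mathcal{A}_L$ of abelian schemes over $S'$.

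It then remains to read off the maps. Take $f : \mathcal{A}' \to \mathcal{A}$ to be the base-change projection along $l$: it is finite (base change of the finite $l$) and flat (base change of a dominant morphism of smooth curves, which is automatically flat), and its restriction to the fibre of $\mathcal{A}' \to S'$ over any $s' \in S'$ is the identity onto $\mathcal{A}_{l(s')}$, hence an isomorphism of abelian varieties. Take $e : \mathcal{A}' \to \mathcal{A}_L$ to be the composite of $\mathcal{A}' \cong \lambda^*\mathcal{A}_L$ with the base-change projection $\lambda^*\mathcal{A}_L \to \mathcal{A}_L$; the latter is flat (base change of the dominant, hence flat, map $\lambda$ of smooth curves) and quasi-finite (its fibres sit over the finite fibres of $\lambda$), and it too restricts to an isomorphism on each fibre over $S'$. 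Commutativity of the square is immediate from the constructions, and, after a final shrinking of $S$, the curve $S'$ is irreducible, non-singular and defined over $\overline{\Q}$, with all the asserted properties holding simultaneously.

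The substantive point — and the only real obstacle — is the quadratic twist: pulling back along $Y(2) \to Y(1)$ only identifies $j$-invariants, not the elliptic schemes themselves, so one genuinely needs the extra quadratic base change cut out by $D$. Everything else is bookkeeping: one must check that passing to an irreducible component, normalizing, and deleting finitely many points of $S$ preserves irreducibility, smoothness and the finiteness/flatness/quasi-finiteness of the maps in the diagram. The minor preliminary of excising the fibres with $j \in \{0,1728\}$ is precisely what guarantees that the twist classification is by a single square class, so that one double cover suffices.
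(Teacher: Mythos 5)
Your argument is correct, but note that the paper itself offers no proof of this statement: it is quoted verbatim as Lemma 5.4 of Habegger's paper, so the only comparison available is with Habegger's own argument, which proceeds differently. Habegger shrinks $S$ so that $\mathcal{E}$ is given by a Weierstrass equation, passes to a finite cover $S'$ over which the $2$-torsion (the roots of the cubic) is rational together with the square root needed for the coordinate change, and then writes the fibres explicitly in Legendre form, the scaling map itself furnishing $e$; your route instead matches $j$-invariants through the degree-$6$ finite map $Y(2)\to Y(1)$, identifies the two pullbacks as quadratic twists of one another (legitimate since the $j$-invariant, being non-constant, is $\neq 0,1728$ in $k(\widetilde S)$, so $\mathrm{Aut}=\{\pm 1\}$ and forms are classified by $k(\widetilde S)^\times/(k(\widetilde S)^\times)^2$), kills the twist by adjoining $\sqrt D$, and extends the generic isomorphism of elliptic schemes over $S'$. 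The two approaches are morally parallel — your $\sqrt D$ plays the role of Habegger's square root in the coordinate change — but yours trades explicit Weierstrass manipulation for the abstract twist classification plus a spreading-out/extension step; the latter is cleanest if you simply invoke that an abelian scheme over the Dedekind base $S'$ is the Néron model of its generic fibre, so the isomorphism extends over all of $S'$ with no further shrinking. Two harmless imprecisions: excising the fibres with $j\in\{0,1728\}$ is not what makes the twist classification quadratic (only the generic fibre matters, and there $j$ is non-constant), and when you later shrink $S$ you should replace $S'$ by $l^{-1}$ of the new $S$, discarding via the finiteness of $l$ the image of whatever finite bad locus appears on $S'$ — standard bookkeeping, and your verification of finiteness of $l$, quasi-finiteness and flatness of $\lambda$ and hence of $e$, and the fibrewise isomorphisms (which respect the zero sections since the generic isomorphism does and sections agreeing generically agree) is sound.
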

The following lemma, already stated in \cite{BarroCapuano16}, follows from the first part of Lemma 5.5 of \cite{Habegger2013}.
\begin{lemma}[{\cite[Lemma 2.4]{BarroCapuano16}}]
\label{habegger_preserves_info_lemma}
If $G$ is a flat subgroup scheme of $\mathcal{A}$, then $e( f ^{-1} (G))$ is a flat subgroup
scheme of $\mathcal{A}_L$ of the same dimension. Moreover, let $X$ be a subvariety of $\mathcal{A}$
dominating $S$ and not contained in a proper flat subgroup scheme of $\mathcal{A}$, let $X''$ be an
irreducible component of $f^{ -1} (X )$ and let $X'$ be the Zariski closure of $e(X'')$ in $\mathcal{A}_L$.
Then, $X'$ has the same dimension as $X $, dominates $Y (2)$ and is not contained in a
proper flat subgroup scheme of $\mathcal{A} _L $.
\end{lemma}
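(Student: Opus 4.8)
The plan is to read everything off the fine structure of the diagram in Lemma~\ref{habegger_diagram_to_Legendre_lemma}. First I would observe that, since $\mathcal{A}'=\mathcal{A}\times_S S'$ and $e$ restricts to an isomorphism of abelian varieties on every fibre over $S'$, the resulting $S'$-homomorphism $\mathcal{A}'\to\mathcal{A}_L\times_{Y(2)}S'$ is finite (being proper and quasi-finite) and fibrewise an isomorphism, hence an isomorphism; under this identification $f$ becomes the base change of the finite map $l\colon S'\to S$ and $e$ the base change of $\lambda\colon S'\to Y(2)$, which is flat because it is dominant between smooth curves. All the assertions then reduce to tracking how \emph{flat subgroup schemes}, and the subvariety $X$, behave under pullback along the finite flat $f$ and under pullback and pushforward along the flat base change $e$; the basic input is \cite[Corollary~III.9.6]{Hartshorne1977}: if $Z$ dominates the base and one base-changes along a finite, or flat, morphism of smooth curves, then every component of the preimage of $Z$ still dominates the new base and has the same dimension as $Z$.

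For the first assertion I would note that $f^{-1}(G)=G\times_{\mathcal{A}}\mathcal{A}'$ is closed in $\mathcal{A}'$; since $f$ is fibrewise a group isomorphism it is a subgroup scheme, each of its components dominates $S'$, so it is flat, and $\dim f^{-1}(G)=\dim G$. As $e$ is quasi-finite and flat, after replacing $S$ (hence $S'$) by a Zariski open so that the dense constructible set $\lambda(S')$ becomes open in $Y(2)$, the image $e\bigl(f^{-1}(G)\bigr)$ is closed; it is a subgroup on every fibre, hence a subgroup scheme, its components dominate $Y(2)$, so it is flat, and it has dimension $\dim G$ because $e$ is quasi-finite. (One could instead work with Zariski closures throughout, which is harmless for a finiteness statement.)

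For the second assertion, finiteness of $f$ and quasi-finiteness of $e$ give $\dim X'=\dim X''=\dim X$; and since $X$ dominates $S$, every component of $f^{-1}(X)=X\times_S S'$, in particular $X''$, dominates $S'$, so $e(X'')$ dominates $\lambda(S')$, which is dense in $Y(2)$, whence $X'$ dominates $Y(2)$. For the non-containment I would argue by contraposition: if $X'\subseteq H_L$ for some proper flat subgroup scheme $H_L$ of $\mathcal{A}_L$, then $X''\subseteq e^{-1}\bigl(e(X'')\bigr)\subseteq e^{-1}(X')\subseteq e^{-1}(H_L)$, and $e^{-1}(H_L)=H_L\times_{Y(2)}S'$ is a flat subgroup scheme of $\mathcal{A}'$, proper since $\dim e^{-1}(H_L)=\dim H_L<\dim\mathcal{A}_L=\dim\mathcal{A}'$. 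Pushing forward along the finite map $f$: the image $f(X'')$ is closed, irreducible, of dimension $\dim X''=\dim X$ and contained in the irreducible variety $X$, so $f(X'')=X$; and $f\bigl(e^{-1}(H_L)\bigr)$ is a closed subgroup scheme of $\mathcal{A}$ that dominates $S$, hence is flat, and is proper, because over a general $s\in S$ its fibre is the union of the finitely many sets $f_{s'}\bigl((e^{-1}H_L)_{s'}\bigr)$ with $l(s')=s$, each a proper closed subscheme of the abelian variety $\mathcal{A}_s$, and an abelian variety over an infinite field is not a finite union of proper closed subvarieties. Hence $X=f(X'')\subseteq f\bigl(e^{-1}(H_L)\bigr)$ would lie in a proper flat subgroup scheme of $\mathcal{A}$, contradicting the hypothesis on $X$.

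The conceptual content here is exactly \cite[Lemma~5.5]{Habegger2013}; the delicate points in carrying it out are the rigidity identification $\mathcal{A}'\cong\mathcal{A}_L\times_{Y(2)}S'$ that turns $f$ and $e$ into genuine base changes, and the fact that the quasi-finite $e$ need not send closed sets to closed sets, which forces either the (harmless) shrinking of $S$ or a systematic use of Zariski closures, together with checking that the property of being a proper flat subgroup scheme is stable under all four operations $f^{-1}$, $f(\cdot)$, $e^{-1}$, $e(\cdot)$. I expect the only genuinely non-formal ingredient to be the properness of $f\bigl(e^{-1}(H_L)\bigr)$, which rests on the elementary fact that an abelian variety over an infinite field is not a finite union of proper closed subgroups.
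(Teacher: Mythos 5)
The paper does not actually prove this lemma: it is quoted verbatim from \cite[Lemma 2.4]{BarroCapuano16} and attributed to the first part of \cite[Lemma 5.5]{Habegger2013}, so you are reconstructing an argument the paper only cites. Much of your reconstruction is fine (the rigidity identification $\mathcal{A}'\cong\mathcal{A}_L\times_{Y(2)}S'$, the base-change bookkeeping, the dimension counts, $f(X'')=X$), but the decisive step has a genuine gap. You claim that $f\bigl(e^{-1}(H_L)\bigr)$ is a closed \emph{subgroup scheme} of $\mathcal{A}$. By your own description, its fibre over a general $s\in S$ is the union of the finitely many subgroups $f_{s'}\bigl((e^{-1}H_L)_{s'}\bigr)$, one for each $s'\in l^{-1}(s)$; since $\deg l>1$ in general, these images are typically distinct subgroups of $\mathcal{A}_s$, and a finite union of distinct subgroups is not closed under the group law. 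Under the paper's definition (the addition map must send $G\times_S G$ into $G$), $f\bigl(e^{-1}(H_L)\bigr)$ is therefore not a subgroup scheme, so containing $X$ in it does not contradict the hypothesis that $X$ lies in no proper \emph{flat subgroup scheme} of $\mathcal{A}$. The same union-of-subgroups phenomenon is why your sentence ``it is a subgroup on every fibre, hence a subgroup scheme'' in the first assertion is unjustified: the fibre of $e(f^{-1}(G))$ over $t$ is $\bigcup_{s'\in\lambda^{-1}(t)}e_{s'}\bigl(f_{s'}^{-1}(G_{l(s')})\bigr)$, again a finite union of subgroups.

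The standard way around this — and essentially what the cited sources do — is to use the structure of flat subgroup schemes rather than pushing forward an arbitrary $H_L$. Since the Legendre scheme is non-isotrivial, the endomorphism ring of its generic fibre is $\Z$, so by Lemma \ref{SubgroupsAreKer_lemma} any proper flat subgroup scheme $H_L$ of $\mathcal{A}_L=\legsch^n$ is contained in $\ker\varphi_{\bm a}$ for some $\bm a\in\Z^n\setminus\{0\}$. Kernels of integer-coefficient maps transport across the fibrewise group isomorphisms: $e^{-1}(\ker\varphi_{\bm a})=\ker\varphi_{\bm a}^{\mathcal{A}'}=f^{-1}(\ker\varphi_{\bm a}^{\mathcal{A}})$, because any group isomorphism commutes with multiplication by integers. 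Hence $X''\subseteq e^{-1}(H_L)$ gives $X=f(X'')\subseteq\ker\varphi_{\bm a}^{\mathcal{A}}$, a proper flat subgroup scheme of $\mathcal{A}$, which is the desired contradiction; and for the first assertion the fibrewise images of $f^{-1}(G)$ under $e$ all coincide for the same reason, which is what makes $e(f^{-1}(G))$ a subgroup scheme. A further minor point: shrinking $S$ does not make $e$-images closed in $\mathcal{A}_L$, since $Y(2)$ and $\mathcal{A}_L$ are fixed and the image lies over $\lambda(S')$ only; one has to work with Zariski closures (as the statement itself does for $X'$) or show surjectivity of $\lambda$, rather than rely on the shrinking.
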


Using these two lemmas we can now prove how Theorem \ref{Geometric_Main_Thm} follows from Theorem \ref{Main_Thm}.

\begin{proof}[Proof of Theorem \ref{Geometric_Main_Thm} assuming Theorem \ref{Main_Thm}]
First, note that, since $\mathcal{C}$ is not contained in any proper subgroup scheme, it is in particular not contained in any flat subgroup scheme. Moreover, it is also not contained in any fiber, as each fiber with the zero section forms a subgroup scheme.

To ease the notation, we also extend the definition of $\mathcal{C}^{\{1, sing\}}$ to non-irreducible curves as follows:
\[
\mathcal{C}^{\{1, sing\} } := \bigcup_{\mathcal{D} \subseteq \mathcal{C} \text{ irred}}\bigcup_{\overset{G \text{ flat}}{\text{codim }G \geq 1} } \mathcal{D} \cap_{sing} G
\]
where $\mathcal{D}$ runs through the irreducible components of $\mathcal{C}$.

Keeping the notation from Lemma \ref{habegger_diagram_to_Legendre_lemma}, since $e$ is quasi-finite, if $e(f^{-1}(\mathcal{C}^{\{1, sing\}}))$ is finite, then $\mathcal{C}^{\{1, sing\}}$ is finite. 
By Lemma \ref{habegger_preserves_info_lemma} and by Proposition \ref{intersection_invariant_etale_morph_prop}, we have that
\[
\mathcal{Z} := e(f^{-1}(\mathcal{C}^{\{1, sing\}})) \setminus e(f^{-1}(\mathcal{C}))^{\{1, sing\}}
\]
is contained in the fibers over the branch locus of $\lambda$ and over the image by $\lambda$ of the ramification locus of $l$. 
Hence, $\mathcal{Z}$ is contained in the intersection between the dominant curve $e(f^{-1}(\mathcal{C}))$ and a finite number of fibers of $\pi_L$.
Then, it is a finite set, as by Lemma \ref{habegger_preserves_info_lemma} no irreducible component of $e(f^{-1}(\mathcal{C}))$ can be contained in a fiber of $\pi_L$.

It is then enough to prove that $e(f^{-1}(\mathcal{C}))^{\{1, sing\}}$ is a finite set, since $e(f^{-1}(\mathcal{C}^{\{1, sing\}})) \subseteq \mathcal{Z} \cup e(f^{-1}(\mathcal{C}))^{\{1, sing\}}$. \\
Let us consider the Zariski closure $\mathcal{C}'$ of $e(\mathcal{C}'')$ for an irreducible component $\mathcal{C}''$ of $f^{-1}(\mathcal{C})$.
Then, $\mathcal{C}'$ is irreducible, and by Lemma \ref{habegger_preserves_info_lemma}, $\mathcal{C}'$ is a curve in $\mathcal{A}_L$ dominating $Y(2)$ and not contained in a proper flat subgroup scheme. 
Moreover, $e(f^{-1}(\mathcal{C}))$ is a finite union of such $\mathcal{C}'$s.
Proving that $(\mathcal{C}')^{\{1, sing\} } $
is finite will make us conclude the proof.
By Lemma \ref{SubgroupsAreKer_lemma}, each flat subgroup scheme of codimension at least 1 of $\mathcal{A}_L$ is
contained in $\ker(\varphi_{\bm{a}})$ for some $\bm{a} \in \Z^n $. So, it is enough to show that $\bigcup \mathcal{C}' \cap_{sing} \ker(\varphi_{\bm{a}})$ is finite, where the union is taken over all $\bm{a} \in \Z^n \setminus \{ 0 \}$. The claim now follows by applying Theorem \ref{Main_Thm} since $\mathcal{C}'$ is not contained in a proper flat subgroup scheme.
\end{proof}

\section{O-minimality and point counting}
\label{Section20}

In the proof of Theorem \ref{Main_Thm} we will use some results on point counting due to Habegger and Pila \cite{HabeggerPila}. In order to state the result, we need to first introduce some general properties of o-minimal structures.
For a more in-depth treatment we refer to \cite{Dries98} and \cite{Dries96}.
\begin{defn}
A structure is a sequence $\mathcal{S} = (\mathcal{S}_N), N \geq 1$, where each $\mathcal{S}_N$ is a collection of
subsets of $\R^N$ such that, for each $N , M \geq 1$:

\begin{enumerate}
\item $\mathcal{S}_N$ is a boolean algebra (under the usual set-theoretic operations);
\item $\mathcal{S}_N$ contains every semi-algebraic subset of $\R^N$ ;
\item if $A \in \mathcal{S}_N$ and $B \in \mathcal{S}_M$, then $A \times B \in \mathcal{S}_{N + M}$ ;
\item if $A\in \mathcal{S}_{N + M}$, then $\pi (A) \in \mathcal{S}_N$, where $\pi: \R^{N + M} \to \R^N$ is the projection onto the first $N$ coordinates.
\setcounter{TMPenumnbr}{\value{enumi}}
\end{enumerate}
If $\mathcal{S}$ is a structure and, in addition,
\begin{enumerate}
\setcounter{enumi}{\value{TMPenumnbr}}
\item $\mathcal{S}_1$ consists of all finite union of open intervals and points,
\end{enumerate}
then $\mathcal{S}$ is called an \emph{o-minimal structure}.\\

Given a structure $\mathcal{S}$, we say that $S \subseteq \R^N$ is a \emph{definable set} if $S \in \mathcal{S}_N$.

\end{defn}

Let $U \in \R^{M + N}$. For $t_0 \in \R^M$, we set $U_{t_0} = \{x \in \R^N \st (t_0, x) \in U \}$ and call $U$ a \emph{family of subsets}
of $\R^N$, while $U_{t_0}$ is called the \emph{fiber} of $U$ above $t_0$. If $U$ is a definable set, then we call it a \emph{definable
family}, and one can see that the fibers $U_{t_0}$ are definable sets too. Let $S \subseteq \R^N$ and $f : S \to \R^M$ be a
function. We call $f$ a \emph{definable function} if its graph $\{(x, y) \in S\times\R^M \st y = f (x)\}$ is a definable
set. It is not hard to see that images and preimages of definable sets via definable functions are still
definable.
We recall some other properties of definable functions; for a reference see for example \cite[B.7]{Dries96}.
\begin{lemma}
\label{ominimal_derivative_lemma}
Let $U \subseteq \R^n$, and let $f: U \to \R^m$ be a definable function in a structure $\mathcal{S}$. Then:
\begin{enumerate}
\item the set $\{ x \in U \st f \text{ is continuous at } x \}$ is definable in $\mathcal{S}$;
\item the set $U' := \{ x \in U^\circ \st f \text{ is differentiable at } x \}$ is definable in $\mathcal{S}$;
\item the derivative $Df: U' \to \R^{m \times n} \cong \R^{mn}$ is definable in $\mathcal{S}$;
\item the set of all $x \in U'$ such that $f$ is $C^1$ on an open neighborhood of $x$ contained in $U$ is definable in $\mathcal{S}$;
\item if $f: U \to R^m$ is $C^1$, then the set $\{ x \in U \st \rk \left(f(x) \right) = i \}$ is definable in $\mathcal{S}$ for every $i = 1, \dots, m$.
\end{enumerate}
\end{lemma}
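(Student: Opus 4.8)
The plan is to prove each of the five items by the same routine: express the relevant analytic condition as a first-order formula in the language of the o-minimal structure $\mathcal{S}$, and then appeal to the structure axioms — $\mathcal{S}_N$ is a boolean algebra, contains every semi-algebraic set, is closed under products, and (the key point) is closed under coordinate projections, so that existential quantifiers over $\R$ can be eliminated. Throughout I would use that the norm and the Euclidean distance on $\R^k$ are semi-algebraic, hence definable, and that the graph of a definable function is a definable set by hypothesis.

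For (1), I would observe that the continuity locus of $f$ is the set of $x \in U$ satisfying $\forall \varepsilon>0\,\exists\delta>0\,\forall y\in U\,(|y-x|<\delta \implies |f(y)-f(x)|<\varepsilon)$; unwinding this, it is built from the graph of $f$ by finitely many boolean operations and projections, hence definable. The same kind of formula shows that the closure and the interior of $U$ are definable, so $U^{\circ}$ is a legitimate definable set to work with. For (2) and (3), I would write that an interior point $x$ lies in $U'$ exactly when there exists a matrix $L\in\R^{m\times n}$ with $\forall\varepsilon>0\,\exists\delta>0\,\forall h\,(0<|h|<\delta \et x+h\in U \implies |f(x+h)-f(x)-Lh|\le\varepsilon|h|)$. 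The matrix $L$ contributes only $mn$ existentially quantified real parameters, so projecting them away gives definability of $U'$; since such an $L$ is unique when it exists, the same formula with the quantifier on $L$ dropped defines the graph $\{(x,L)\st x\in U',\ L=Df(x)\}$, which is exactly the assertion that $Df$ is definable.

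For (4), I would characterise the required set as those $x$ for which there is some $\delta>0$ with $B(x,\delta)\subseteq U$, with every point of $B(x,\delta)$ lying in the (definable) set $U'$, and with $Df$ — definable by (3) — continuous on $B(x,\delta)$, the last condition being a formula of the same shape as the one in (1) applied to $Df$ in place of $f$. For (5), assuming $f$ is $C^1$, the entries of $f(x)$ are definable functions of $x$ (composites of $f$ with coordinate projections), and $\rk(f(x))\ge j$ holds iff one of the $j\times j$ minors — a polynomial in those entries — is non-zero; hence $\{x\st\rk(f(x))\ge j\}$ is definable and $\{x\st\rk(f(x))=i\}=\{\rk\ge i\}\setminus\{\rk\ge i+1\}$ is definable as a boolean combination.

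The one point that deserves attention — and it is the closest thing here to an obstacle — is making the formulas genuinely first-order: "differentiable at $x$" must be phrased with only finitely many quantified real variables, which is precisely what the entries of the candidate matrix $L$ supply, and one must keep the dependency order straight, since (2) is used in (3) and (4), and (3) is used in (4) and (5). Once that bookkeeping is done, every step is a mechanical application of the structure axioms; the lemma carries no deep content and is recorded only for use in the point-counting arguments of Section \ref{Section:proof_of_main}.
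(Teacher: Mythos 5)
Your proof is correct: the quantifier-elimination-by-projection argument (expressing continuity, differentiability with the candidate matrix $L$ as finitely many existentially quantified parameters, and the rank condition via minors) is the standard way to establish all five items, and the bookkeeping you flag — interiority of $x$ for uniqueness of $L$, and the dependency of (3)--(5) on (2)--(3) — is handled properly. Note, however, that the paper does not prove this lemma at all; it simply records it with a citation to \cite[B.7]{Dries96}, and your argument is essentially the one found in that reference, so there is nothing to compare beyond the fact that you supplied the routine verification the paper chose to omit.
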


There are many examples of o-minimal structures, see \cite{Dries98}. In this article, we are mainly interested
in the structure of globally subanalytic sets, usually denoted by $\R_{\mathrm{an}}$, and the one obtained adjoining also the graph of the real exponential function, usually denoted by $\R_{\mathrm{an}, \mathrm{exp}}$. 
The o-minimality of this last structure was proved by van
den Dries and Miller \cite{Dries94}. 
We are not going to give a detailed description of these structures as it is enough for us to know that, if $D\subseteq \R^N$ is a compact
definable set, $I$ is an open neighborhood of $D$ and $f : I \to \R^M$ is an analytic function, then $f (D)$ is also
definable.\\

We now fix an o-minimal structure $\mathcal{S}$.
\begin{prop}[{\cite[4.4]{Dries96}}]
\label{Def_Family_uniform_bound_prop}
Let $U$ be a definable family; then, there exists a positive integer $\gamma$ such
that each fiber of $U$ has at most $\gamma$ connected components.
\end{prop}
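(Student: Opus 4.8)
\medskip

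The plan is to derive the uniform bound from the Cell Decomposition Theorem, the standard engine behind uniformity statements in o-minimal structures (see \cite[Ch.~3]{Dries98}). Recall that a \emph{cell} in $\R^n$ is defined recursively --- a cell in $\R$ is a point or an open interval, and a cell in $\R^{d+1}$ is either the graph over a cell $D\subseteq\R^d$ of a definable continuous function, or the region strictly between two such functions (with $\pm\infty$ allowed) --- and that each cell is definable, connected, and homeomorphic to an open box. The Cell Decomposition Theorem states that any finite collection of definable subsets of $\R^n$ admits a finite partition into cells, each of which is either contained in or disjoint from each of the given sets; and, being defined recursively through the coordinate projections, such a decomposition is automatically compatible with the projection onto the first $M$ coordinates: the images of the cells form a cell decomposition of $\R^M$, and the cells lying over a fixed base cell are stacked over it.

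Write the ambient space as $\R^{M+N}$ with coordinates $(t,x)$, $t\in\R^M$ and $x\in\R^N$, so that the fiber of $U$ above $t$ is $U_t=\{x\in\R^N:(t,x)\in U\}$. First I would apply the Cell Decomposition Theorem to the single definable set $U\subseteq\R^{M+N}$, obtaining a finite cell decomposition $\mathcal D$ of $\R^{M+N}$ compatible with $U$ and with the projection onto the $t$-coordinates; let $\mathcal D_M$ denote the induced cell decomposition of $\R^M$. Next, fixing a base cell $D\in\mathcal D_M$ and a point $t\in D$, I would check by induction on $N$ that for every cell $C\in\mathcal D$ lying over $D$ the fiber $C_t=\{x:(t,x)\in C\}$ is again a cell of $\R^N$: a graph-section of a cell is a graph over the corresponding lower-dimensional fiber, and a band-section is a band. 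In particular each such $C_t$ is connected.

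Since $\mathcal D$ partitions $U$ into cells, for $t\in D$ we have $U_t=\bigcup C_t$, where $C$ runs over those cells of $\mathcal D$ that are contained in $U$ and lie over $D$. This presents $U_t$ as a finite union of connected sets, hence $U_t$ has at most $\#\{C\in\mathcal D:C\subseteq U\}$ connected components. The right-hand side is independent of $t$ and of the base cell $D$, the base cells cover all of $\R^M$, and fibers $U_t$ with $t$ outside the projection of $U$ are empty; so setting $\gamma:=\#\{C\in\mathcal D:C\subseteq U\}$ yields the claim.

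The argument is essentially combinatorial once the Cell Decomposition Theorem is granted, so the \quotes{main obstacle} is really the importation of that theorem, which we treat as a black box from general o-minimality, valid in any o-minimal structure. The only point requiring care is the elementary but slightly fiddly verification that cells remain cells under passing to $t$-fibers over a base cell; this is exactly the \quotes{stacked over $D$} clause of cell decomposition and is proved by the same recursion that underlies the theorem. No property of the particular structures $\R_{\mathrm{an}}$ or $\R_{\mathrm{an},\mathrm{exp}}$ enters.
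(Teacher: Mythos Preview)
Your argument via the Cell Decomposition Theorem is correct and is the standard route to this uniformity statement. Note, however, that the paper does not give its own proof: the proposition is simply quoted from \cite[4.4]{Dries96} as a black box, so there is no ``paper's proof'' to compare against beyond the citation itself. Your write-up supplies exactly the argument underlying that citation.
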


We are going to use a result from \cite{HabeggerPila}. For this, we need to define the height of a rational point.
The height used in \cite{HabeggerPila} is not the usual projective Weil height, but a coordinatewise affine height.
If $a/b$ is a rational number written in lowest terms, then $H (a/ b) = \max \{ \abs{a} , \abs{b} \}$ and, for a
$N$-tuple $(a_1, \dots, a_N ) \in \Q^N$, we set $H (a_1, \dots, a_N ) = \max _{i} H (a_i)$. For a family $Z \subseteq \R^{M_1+ M_2 + N}$,
a positive real number $T$ and $t \in \R^{M_1}$, we define

\begin{equation}
\label{height_stratification_defn_eqn}
Z_t^{\sim} (\Q , T ) = 
\{(y , z) \in Z _t \st y \in \Q ^{M_2} ,H(y) \leq T \}.
\end{equation}

By $\pi_1$ and $\pi_2$, we denote the projections of $Z_t$ to the first $M_2$ and the last $N$ coordinates,
respectively.

\begin{prop}[\cite{HabeggerPila}, Corollary 7.2]
\label{ominHPprop}
Let $Z \subseteq \R^{M_1+ M_2 + N}$ be a definable family. For every $\varepsilon > 0$,
there exists a constant $c = c (Z, \varepsilon)$ with the following property. Fix $t \in \R^{M_1}$ and $T \geq 1$. 
If $\,\abs{\pi_2 (\Gamma)} > cT^\varepsilon$ for some $\Gamma \subseteq Z_t^{\sim}(\Q , T)$, then there exists a continuous definable function
$\delta: [0, 1] \to Z_t$ such that
\begin{enumerate}
\item the composition $\pi_1 \circ \delta: [0, 1] \to \R^{M_2}$ is semi-algebraic and its restriction to $(0, 1)$ is real
analytic;
\item the composition $\pi_2 \circ \delta: [0, 1] \to \R^N$ is non-constant;
\item we have $\pi_2 (\delta (0)) \in \pi_2 (\Gamma)$.
\end{enumerate}

\end{prop}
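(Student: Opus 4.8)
The plan is to run the Pila--Zannier strategy: I will show that the set in the conclusion, call it $\mathcal{C}^{\{1,sing\}}$, has bounded height and bounded degree, and then invoke the Northcott property. Fix a number field $K_0$ over which $\mathcal{C}$, the scheme $\legsch$ and the coordinate points $P_1,\dots,P_n$ are all defined, and view the restrictions of $P_1,\dots,P_n$ to $\mathcal{C}$ as points of $\legsch$ over the function field of $\mathcal{C}$. Over a compact, simply connected $D$ in the analytification of $S$ pick a basis $f,g$ of the period lattice and branches of the elliptic logarithms $z_1,\dots,z_n$ of these points, and write $z_j = u_j f + v_j g$ with $u_j,v_j$ real-analytic in $\lambda\in D$; together with $\ddl{u_j},\ddl{v_j}$ this gives a subanalytic surface $\Sigma\subseteq\R^{4n}$, definable in $\R_{\mathrm{an}}$ (the derivatives being definable by Lemma \ref{ominimal_derivative_lemma}). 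A point $\bm{c}\in\mathcal{C}$ with $\lambda(\bm{c})\in D$ at which $\mathcal{C}$ meets $\ker(\varphi_{\bm{a}})$ tangentially produces integers $m_1,m_2$ such that, at $\lambda=\lambda(\bm{c})$,
\[
\sum_j a_j u_j = m_1, \qquad \sum_j a_j v_j = m_2, \qquad \sum_j a_j \ddl{u_j} = 0, \qquad \sum_j a_j \ddl{v_j} = 0 ,
\]
the first two relations encoding $\varphi_{\bm{a}}(\bm{c}) = O(\lambda(\bm{c}))$ and the last two the tangency (as sketched in the introduction). Hence the associated point of $\Sigma$ lies on the linear subvariety of $\R^{4n}$ cut out by these four equations, whose coefficients are $\bm{a}$ and $(m_1,m_2)$.

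\emph{Bounded height.} Here I would use Silverman's specialization theorem \cite{SilvermanSpec}. The hypothesis that $\mathcal{C}$ admits no non-trivial identical linear relation forces the group generated by $P_1,\dots,P_n$ over $k(\mathcal{C})$ to be free of rank $n$ with positive definite N\'eron--Tate pairing: a relation $\sum a_j P_j = T$ with $T$ torsion of order $m$ would give the identical relation $\sum (m a_j) P_j = O$. Now let $\bm{c}$ satisfy $\varphi_{\bm{a}}(\bm{c}) = O(\lambda(\bm{c}))$ for some $\bm{a}\neq 0$: applying the specialization theorem to the section $\varphi_{\bm{a}}$ restricted to $\mathcal{C}$, which is non-torsion and has geometric canonical height $\gg\abs{\bm{a}}^2$, and using that its value at $\bm{c}$ is the origin, one bounds $h(\lambda(\bm{c}))$ by a constant independent of $\bm{a}$, and then $h(\bm{c})$ is bounded as well. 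This step uses no tangency; it bounds the height of every point of $\bigcup_{\bm{a}}\mathcal{C}\cap\ker(\varphi_{\bm{a}})$.

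\emph{Bounded degree.} Let $\bm{c}\in\mathcal{C}^{\{1,sing\}}$ have degree $d$ over $K_0$, with $\varphi_{\bm{a}}(\bm{c}) = O(\lambda(\bm{c}))$ and tangency. Since $\bm{a}\in\Z^n$ is Galois-invariant and, by Corollary \ref{inters_invariant_iso_cor}, the intersection multiplicity is preserved under Galois, all $d$ conjugates of $\bm{c}$ again lie in $\mathcal{C}^{\{1,sing\}}$ and satisfy the same relation $\bm{a}$. If the lattice of relations satisfied by $P_1(\bm{c}),\dots,P_n(\bm{c})$ has rank $\geq 2$, then $\bm{c}$ lies on a flat subgroup of codimension $\geq 2$, hence in a finite set by \cite{BarroCapuano20}; discarding those, I may assume this lattice is $\Z\bm{a}_0$ with $\bm{a}_0$ primitive and $\bm{a} = e\bm{a}_0$. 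As $\ker(\varphi_{\bm{a}_0})$ is the smooth irreducible component of $\ker(\varphi_{\bm{a}}) = \varphi_{\bm{a}_0}^{-1}(\legsch[e])$ through the zero section, and $\bm{c}$ lies on it and on no other component, $\mathcal{C}$ is also tangent to $\ker(\varphi_{\bm{a}_0})$ at $\bm{c}$. A standard argument from the geometry of numbers, using the height bound together with a polynomial-in-$d$ bound on the torsion order of $\legsch_{\lambda(\bm{c})}$ (which lives over a field of degree $\ll d$), then gives $\abs{\bm{a}_0}\leq c\,d^{\kappa}$ for absolute $c,\kappa$, as in \cite{BarroCapuano20,MZ12}. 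After covering $S$ by finitely many pieces over each of which the $u_j,v_j,\ddl{u_j},\ddl{v_j}$ define a definable surface --- using the multiplicative uniformizer near the cusps $\{0,1,\infty\}$, where one works in $\R_{\mathrm{an},\mathrm{exp}}$ as in \cite{HabeggerPila} --- at least $\gg d$ of the conjugates land in a single piece, producing $\gg d$ points of a definable $\Sigma$ lying on linear subvarieties with integer coefficients of absolute value $\leq c'd^{\kappa}=:T$.

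Finally I would apply the counting result of Habegger and Pila to the definable family $Z = \{(\bm{a},m_1,m_2;x)\st x\in\Sigma \text{ and the four linear equations hold}\}$, with $\pi_2$ the projection to the $\R^{4n}$-factor. If $d$ exceeds $c(\varepsilon)T^{\varepsilon}$, Proposition \ref{ominHPprop} yields a continuous definable arc $\delta$ with $\pi_1\circ\delta$ semialgebraic and $\pi_2\circ\delta$ non-constant: that is, a non-constant real-analytic arc $s\mapsto(\text{the point of }\Sigma\text{ over }\lambda(s))$ with $\lambda(s)$ non-constant, along which the four equations hold with semialgebraic --- hence algebraic-function --- coefficients $a_j(s),m_1(s),m_2(s)$. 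By Bertrand's transcendence results on elliptic logarithms and their $\lambda$-derivatives \cite{Bertrand1990}, such a relation forces the coefficients to be constant unless it comes from an identical linear relation on $\mathcal{C}$, i.e.\ $\varphi_{\bm{a}_0} = O\circ\lambda$ on $\mathcal{C}$, which is excluded by hypothesis. Hence no such arc exists and $d\leq c(\varepsilon)T^{\varepsilon}\leq c(\varepsilon)(c'd^{\kappa})^{\varepsilon}$; choosing $\varepsilon<1/\kappa$ bounds $d$, and together with the height bound Northcott finishes the proof. I expect the last step to be the main obstacle: extracting a genuine contradiction with Bertrand's theorem from the Habegger--Pila arc --- that is, recognising the ``degenerate'' output of the point-counting proposition as an honest identical group relation on $\mathcal{C}$, together with the analysis near the cusps where the Betti coordinates cease to be analytic. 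Setting up $\Sigma$ and the family $Z$ with the required uniform definability, and isolating the precise transcendence statement needed, form the remainder of the technical core.
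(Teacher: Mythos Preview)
Your proposal is not a proof of the stated proposition. Proposition~\ref{ominHPprop} is a general o-minimal point-counting result, quoted verbatim from \cite[Corollary~7.2]{HabeggerPila}; it concerns an arbitrary definable family $Z\subseteq\R^{M_1+M_2+N}$ and makes no reference to elliptic schemes, curves $\mathcal{C}$, or singular intersections. The paper does not prove it either: it is simply cited as a black box. What you have written is instead a proof sketch of Theorem~\ref{Main_Thm}, the main theorem of the paper, which \emph{uses} Proposition~\ref{ominHPprop} as one of its inputs. So there is a basic mismatch between the statement you were asked about and the argument you supplied.

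For what it is worth, as a sketch of Theorem~\ref{Main_Thm} your outline is broadly in line with the paper's own argument in Sections~\ref{Section30}--\ref{Section:proof_of_main}: bounded height via Silverman specialization, reduction to rank-one relation lattices via \cite{BarroCapuano16}, small generators for the tangential relation, a definable surface $\Sigma$ built from Betti coordinates and their $\lambda$-derivatives, and the Habegger--Pila counting combined with Bertrand's transcendence (Lemma~\ref{funcTrascLemma}) to bound the degree. One point of divergence: the paper does \emph{not} work near the cusps in $\R_{\mathrm{an,exp}}$ as you suggest; it instead restricts to compact discs $D_{\bm{c}_i}$ inside $\widehat{\mathcal{C}}$ and uses Lemma~\ref{bound_on_embedding_in_compact_lemma} to ensure enough Galois conjugates land in a single disc. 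But none of this addresses the actual statement you were given, which admits no proof in the present paper.
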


\begin{remark}
For families that are definable using only Pfaffian functions, such as the Weierstrass $\wp$ function, the constant $c$ of the previous proposition can be bounded by a polynomial of effectively bounded degree, due to work of Binyamini, Jones, Schmidt and Thomas \cite[Cor. 4.6]{Binyamini2023}.
\end{remark}

\section{Periods and elliptic logarithms}
\label{Section30}

In this section, we recall some tools and results from \cite{MZ12} and \cite{BarroCapuano16} regarding the elliptic logarithms.

We start by recalling our setting. 
We defined $\lambda: \legsch \to Y(2) = \P \setminus \{0, 1, \infty \}$ to be the Legendre elliptic scheme, as a subscheme of $\P^2 \times Y(2)$, with fibers $\legsch_\lambda$ defined in affine coordinates by the equation
\[
Y^2 = X (X - 1)(X - \lambda).
\]
It naturally comes with functions $X$ and $Y$ defined on it.
We denote by $\legsch^n$ its $n$-th fibered power, with coordinates $P_1, \dots, P_n$, and with corresponding functions $x_1, \dots, x_n$ and $y_1, \dots, y_n$.

We have an irreducible curve $\mathcal{C}$, defined over $\overline{\Q}$ and embedded in $\legsch^n$, not contained in a fiber over $Y(2)$, and not contained in any proper flat subgroup of $\legsch^n$.\\

Let $\hatcalC$ be the subset of points $\bm{c} \in \mathcal{C}$ such that 
for every $j = 1,\dots, n$ the values $x_j (\bm{c})$ are different from $0, 1, \infty$ and $\lambda(\bm{c})$,
and such that $\bm{c}$ is not a singular point or a point on which
the differential of $\lambda$ vanishes. Note that, in this way, we exclude finitely many $\bm{c} \in \mathcal{C}$, and these are algebraic
points of $\mathcal{C} $. Moreover, on $\hatcalC $, the coordinate function $\lambda$ has a local
inverse everywhere.\\

Fix a point $\bm{c}_\ast \in \hatcalC$ and a (analytic) neighborhood $N_{\bm{c}_\ast}$ on which $\lambda$ has an inverse.
On $N_{\bm{c}_\ast}$ it is possible to define analytic functions $f, g$ that form a basis of the local system of periods $\Pi_{\mathcal{E}_\lambda}$ (i.e. $\mathcal{E}_{\lambda(\bm{c})} \cong \C/ f(\bm{c}) \Z + g(\bm{c}) \Z$ for any $\bm{c} \in N_{\bm{c}_\ast}$); we can take these functions as analytic continuations of hypergeometric functions.

Moreover we can define $z_1, \dots, z_n$, again analytic on $N_{\bm{c}_\ast}$, such that $\exp_ {\lambda(\bm{c})} (z_j (\bm{c})) = P_j (\bm{c})$, for any $\bm{c} \in N_{\bm{c}_\ast}$. In other words, they are elliptic logarithms for the $P_i$'s.
The explicit construction of these maps can be found in \cite{MZ12} and \cite{BarroCapuano16}.

\subsection{A functional transcendence result}

Keeping the same notation as the previous subsection, we prove the following transcendence result, that will be needed later.

\begin{lemma}
\label{funcTrascLemma_fixed_point}
The functions $z _1 , \dots , z _n, \ddl{z_1}, \dots, \ddl{z_n}$ are algebraically independent over 
$\C\left(\lambda, f, g, \ddl{f}, \ddl{g}\right)$ on $N_\ast$.

\end{lemma}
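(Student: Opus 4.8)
The plan is to prove the algebraic independence of $z_1,\dots,z_n,\ddl{z_1},\dots,\ddl{z_n}$ over the field $K := \C\bigl(\lambda, f, g, \ddl{f}, \ddl{g}\bigr)$ by reducing to a transcendence statement of Bertrand \cite{Bertrand1990} (already invoked in the introduction). Recall that $\mathcal{C}$ is not contained in a proper flat subgroup scheme of $\legsch^n$, so no nontrivial integral linear combination $a_1 z_1 + \dots + a_n z_n$ is a period-combination $a_0 f + b_0 g$ identically on $\mathcal{C}$; this is the geometric input, and the algebraic independence is its analytic upgrade. First I would set up the differential field: the base is $\C(\lambda)$ with derivation $\ddl{\cdot}$, adjoin $f,g$ (which satisfy the Picard–Fuchs equation, so $\ddl{f},\ddl{g}$ and their further derivatives stay in a finite-dimensional picture), and then adjoin the $z_j$. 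The key structural fact is that each $z_j$ satisfies an inhomogeneous Picard–Fuchs–type equation: applying $\ddl{\cdot}$ twice to $z_j$ produces, by the Legendre differential equation governing the elliptic logarithm, a relation expressing $\frac{\partial^2 z_j}{\partial\lambda^2}$ in terms of $z_j$, $\ddl{z_j}$ and elements of $\C(\lambda, x_j, y_j)\subseteq \overline{K}$, i.e. algebraic over $K$. Thus the $z_j$ together with the period lattice generate a differential module which is an extension of the trivial module by (copies of) the rank-two Picard–Fuchs module.

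Next I would argue by a Kolchin/Ax-type dimension count, or more directly by citing the relevant part of \cite{Bertrand1990} (or the elliptic analogue of the Lindemann–Weierstrass theorem in the relative setting used in \cite{MZ12, BarroCapuano16}): the transcendence degree over $\C(\lambda)$ of the differential field generated by $f,g$ and all the $z_j$ is exactly $2 + 2n + (\text{something})$ as long as the $z_j$ are $\Z$-linearly independent modulo periods, which is precisely our hypothesis on $\mathcal{C}$. Concretely, the $2n$ contributed by $z_1,\dots,z_n,\ddl{z_1},\dots,\ddl{z_n}$ over $K$ is the content we want: any algebraic relation over $K$ among these $2n$ functions would, after differentiating and using the Picard–Fuchs relations to reduce all higher derivatives back into the module, force a linear relation with constant (hence, by clearing denominators, integer) coefficients among the $z_j$ themselves modulo $\C f + \C g$ — contradicting the assumption that $\mathcal{C}$ lies in no proper flat subgroup scheme. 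I would phrase this reduction carefully: one takes a hypothetical minimal-degree polynomial relation $P(z_1,\dots,z_n,\ddl{z_1},\dots,\ddl{z_n}) = 0$ with coefficients in $K$, applies the derivation $\ddl{\cdot}$, subtracts a suitable $K$-multiple of $P$ to kill the top-degree part, and invokes minimality to conclude each coefficient's derivative is proportional to the coefficient, which linearizes the relation; then a monodromy/specialization argument (the $z_j$ are elliptic logarithms of sections, so their periods under analytic continuation around $Y(2)$ generate a $\Z$-module) forces the coefficients to be constants and then integers.

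The main obstacle will be handling the derivatives $\ddl{z_j}$ cleanly: unlike the $z_j$ themselves, which are "pure" elliptic logarithms, the $\ddl{z_j}$ mix the logarithm with the quasi-period (Legendre-type) data, so one must be sure that adjoining $\ddl{f},\ddl{g}$ to the base field already captures all the "extra" quasi-period functions that appear when differentiating $z_j$, leaving genuinely $2n$ new transcendentals rather than fewer. This amounts to checking that the map sending a section $P_j$ to the pair $(z_j, \ddl{z_j})$ modulo the period module is injective up to the expected $\Z$-linear relations — equivalently, that a relation among the $\ddl{z_j}$ alone already propagates (via the inhomogeneous Picard–Fuchs equation, reading off the $z_j$ from the second derivatives) to a relation among the $z_j$. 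I expect this to follow from the same non-degeneracy hypothesis on $\mathcal{C}$ together with the irreducibility of the Legendre Picard–Fuchs operator, but it is the point where the argument has real content rather than bookkeeping; I would isolate it as the crux and dispatch it by combining Bertrand's theorem with the explicit shape of the system $\begin{cases} z_j = u_j f + v_j g,\\ \ddl{z_j} = \ddl{u_j} f + u_j \ddl{f} + \ddl{v_j} g + v_j \ddl{g},\end{cases}$ exactly as it appears in the sketch in the introduction, noting that the $u_j, v_j$ are the (multivalued, real-analytic) coordinates of $z_j$ in the period basis and that an algebraic relation over $K$ forces an algebraic — hence, by the structure of the $u_j, v_j$, a $\Z$-linear — relation among them.
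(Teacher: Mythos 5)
Your overall route coincides with the paper's: derive from the hypothesis on $\mathcal{C}$ that $z_1,\dots,z_n,f,g$ are $\Z$-linearly independent (a relation $a_1z_1+\dots+a_nz_n=a_{n+1}f+a_{n+2}g$ would exponentiate to an identical relation \eqref{linear_cond_eqn} on $\mathcal{C}$), observe that $\wp_\lambda(z_j)=x_j-\tfrac{1}{3}(\lambda+1)$ is algebraic over $\C(\lambda)$, and then invoke Bertrand's theorem. But there is a genuine gap exactly at the point you yourself flag as the crux, and your proposed way of dispatching it does not resolve it. Bertrand's Théorème 5 gives algebraic independence of $z_1,\dots,z_n,\ddl{z_1},\dots,\ddl{z_n}$ over $\C\left(\lambda,f,g,\ddl{f}\right)$ — the derivative of only one period appears in the base field — whereas the lemma asserts independence over the a priori larger field $\C\left(\lambda,f,g,\ddl{f},\ddl{g}\right)$. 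The missing idea is that these two fields are equal: since $f,g$ form a basis of solutions of the hypergeometric (Picard--Fuchs) equation, their Wronskian $W=f\ddl{g}-g\ddl{f}$ satisfies $\frac{dW}{d\lambda}=-\frac{2\lambda-1}{\lambda(\lambda-1)}W$, hence $W=k(\lambda^2-\lambda)^{-1}\in\C(\lambda)$ and $\ddl{g}=\left(W+g\ddl{f}\right)\frac{1}{f}\in\C\left(\lambda,f,g,\ddl{f}\right)$. Nothing in your sketch produces this (essentially the Legendre relation); you never pin down the exact base field in Bertrand's statement, which is precisely where the care is needed.

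Your proposed substitutes would not fill the hole either. The claim that an algebraic relation over $K$ ``forces an algebraic — hence, by the structure of the $u_j,v_j$, a $\Z$-linear — relation'' begs the question: passing from an algebraic relation to an integral linear one is exactly the content of the transcendence theorem being invoked, not something the Betti coordinates yield for free. Likewise, the minimal-degree-polynomial/derivation trick with a monodromy argument, or a Kolchin/Ax-type dimension count, would amount to reproving (a case of) Bertrand's theorem rather than using it, and ``irreducibility of the Legendre Picard--Fuchs operator'' is not the mechanism needed. Once you (i) quote Bertrand's result with its precise base field $\C\left(\lambda,f,g,\ddl{f}\right)$ and (ii) add the Wronskian computation above to conclude $\C\left(\lambda,f,g,\ddl{f}\right)=\C\left(\lambda,f,g,\ddl{f},\ddl{g}\right)$, the proof closes exactly as in the paper.
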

\begin{proof}
The functions $z _1 , \dots , z _n , f , g$ are analytic functions of $\lambda$, linearly 
independent over $\Z$. Indeed, a relation $a_1 z _1 + \dots + a_n z _n = a_{n+1} f + a_{n+2} g$, with integer
coefficients, would map via $\exp_\lambda$ to a relation of the form \eqref{linear_cond_eqn} on $N_\ast$,
and therefore
on all of $\mathcal{C}$, which cannot hold by the hypothesis of the theorem.
Moreover, if $\wp_\lambda$ is
the Weierstrass $\wp$-function associated with $\Pi_{\mathcal{E}_\lambda} = f(\lambda) \Z + g(\lambda) \Z$, the $\wp_\lambda (z _i )$ are algebraic functions of
$\lambda$ because $\wp_\lambda (z _j ) = x_j - \frac{1}{3} (\lambda + 1)$ (see \cite[(3.8), p. 1683]{MZ10}).
Therefore, applying \cite[Théorème 5, p. 136]{Bertrand1990} we have that $z_1, \dots, z_n, \ddl{z_1}, \dots, \ddl{z_n}$ are algebraically independent over $\C\left(\lambda, f, g, \ddl{f}\right)$.\\
To conclude we will prove that $\ddl{g} \in \C\left(\lambda, f, g, \ddl{f}\right)$, and so actually $\C\left(\lambda, f, g, \ddl{f}\right)=\C\left(\lambda, f, g, \ddl{f}, \ddl{g}\right)$. This part of the proof was suggested to us by Daniel Bertrand.\\
Since $f$ and $g$ are hypergeometric series,
they satisfy the hypergeometric differential equation (see \cite[p. 182]{Husemoeller1987}):
 
\[
\lambda(1-\lambda) \frac{d ^2 \omega}{d\lambda^2} + (1-2\lambda) \frac{d \omega}{d \lambda} - \frac{1}{4} \omega = 0.
\]

Since they are linearly independent, they form a basis of the space of solution of such a differential equation.
We can therefore consider the Wronskian associated with them:
\[
W(\lambda) := f \frac{ \partial g}{\partial \lambda} - g \frac{\partial f}{\partial \lambda}.
\]
By explicit computation, we get that
\[
\frac{d W}{d \lambda} = - \frac{2 \lambda - 1}{\lambda(\lambda-1)} W;
\]
solving this differential equation gives $W = k (\lambda^2-\lambda)^{-1}$ for some $k \in \C$, hence $W \in \C (\lambda)$.
We conclude that
\[
\ddl{g} = \left( W + g \ddl{f} \right) \frac{1}{f} \in \C \left( \lambda, f, g, \ddl{f} \right) ,
\]
proving the claim.
\end{proof}

We would like now to extend our functions $f , g, z _1 , \dots , z _n$ on
$\widehat{\mathcal{C}}$.
If $\bm{c} \in \widehat{\mathcal{C}}$, 
one can continue $f$ and $g$ to a neighborhood $N_{\bm{c}}$ of $\bm{c}$. 
In fact, if we
choose $\bm{c} \in \widehat{\mathcal{C}}$ and a path from $\bm{c}_\ast$ to $\bm{c}$ lying in 
$\widehat{\mathcal{C}}$, we can easily continue $f$ and $g$
along the path.\\

To continue $z _j$ from a point $\bm{c}_\ast$ to a $\bm{c}$ in 
$\widehat{\mathcal{C}}$, it is sufficient to verify that if $N_1$ and $N_2$
are two open small subsets in 
$\widehat{\mathcal{C}}$, with $N_1 \cap N_2$ connected, and if $z _j$ has analytic
definitions $z'_j$ on $N_1$ and $z'' _j$ on $N_2$, then $z _j$ has an analytic definition on the union
$N_1 \cup N_2$.
But we saw that $\exp_\lambda (z _j ) = P_j$ for every $j = 1, \dots, n$; hence on $N_1 \cap N_2$
we have $\exp_\lambda (z'_j ) = \exp_\lambda (z''_j )$. This means that there exist rational integers $u, v$
with $z''_j = z'_j + u f + v g$ on this intersection, and they must be constant there. 
Hence it is enough to change $z ''_j$ to $z ''_j - u f - v g$ on $N_2$.\\
Using the same path, it is clear that we can continue the function $(f, g, z _1 , \dots , z _n )$
from a small neighborhood of $\bm{c}_\ast$ to a small neighborhood $N_{\bm{c}} \subseteq \widehat{\mathcal{C}}$ of $\bm{c}$, and that the
obtained function $( f^{\bm{c}} , g ^{\bm{c}} , z _1 ^{\bm{c}} , \dots , z _n ^{\bm{c}} )$ is analytic on $N_{\bm{c}}$. Moreover, the functions
preserve algebraic independence, as the following lemma shows.

\begin{lemma}
\label{funcTrascLemma}
The functions $z _1 ^{\bm{c}} , \dots , z _n ^{\bm{c}} ,
\ddl{z _1 ^{\bm{c}}} , \dots , \ddl{z _n ^{\bm{c}}}$ are algebraically independent over $\C\left(\lambda, f ^{\bm{c}} , g ^{\bm{c}}, \ddl{f ^{\bm{c}}} , \ddl{g ^{\bm{c}}} \right)$
on $N_{\bm{c}}$.
\end{lemma}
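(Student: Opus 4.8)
The plan is to reduce to Lemma~\ref{funcTrascLemma_fixed_point} by transporting the algebraic independence along the path used to build the continued functions. Recall that, by construction, $(f^{\bm{c}}, g^{\bm{c}}, z_1^{\bm{c}}, \dots, z_n^{\bm{c}})$ on $N_{\bm{c}}$ is the analytic continuation of $(f, g, z_1, \dots, z_n)$ from $N_{\bm{c}_\ast}$ along a fixed path $\gamma \subseteq \widehat{\mathcal{C}}$ with endpoints $\bm{c}_\ast$ and $\bm{c}$; the adjustments by $uf + vg$ in the previous discussion are exactly the ones needed to make such a continuation along a chain of small neighborhoods well defined and single-valued on $N_{\bm{c}}$. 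Since $\lambda$ is single-valued on $\widehat{\mathcal{C}}$ and analytic continuation commutes with $\partial/\partial\lambda$, the whole tuple $(\lambda, f, g, \ddl{f}, \ddl{g}, z_1, \dots, z_n, \ddl{z_1}, \dots, \ddl{z_n})$ continues along $\gamma$ to $(\lambda, f^{\bm{c}}, g^{\bm{c}}, \ddl{f^{\bm{c}}}, \ddl{g^{\bm{c}}}, z_1^{\bm{c}}, \dots, z_n^{\bm{c}}, \ddl{z_1^{\bm{c}}}, \dots, \ddl{z_n^{\bm{c}}})$.

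First I would invoke the standard fact that continuation along $\gamma$ induces a ring isomorphism $\sigma_\gamma$ between the ring of (meromorphic) germs at $\bm{c}_\ast$ that continue along $\gamma$ and the corresponding ring of germs at $\bm{c}$, that it commutes with $\partial/\partial\lambda$, and that $\sigma_\gamma^{-1}$ is given by continuation along the reversed path. Restricting $\sigma_\gamma$ to the subfield generated over $\C$ by $\lambda, f, g, \ddl{f}, \ddl{g}$ yields a field isomorphism $K := \C(\lambda, f, g, \ddl{f}, \ddl{g}) \xrightarrow{\ \sim\ } K^{\bm{c}} := \C(\lambda, f^{\bm{c}}, g^{\bm{c}}, \ddl{f^{\bm{c}}}, \ddl{g^{\bm{c}}})$, and by the first paragraph this isomorphism sends $z_j \mapsto z_j^{\bm{c}}$ and $\ddl{z_j} \mapsto \ddl{z_j^{\bm{c}}}$. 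Now suppose, for contradiction, that there were a nonzero polynomial $P \in K^{\bm{c}}[X_1, \dots, X_{2n}]$ with
\[
P\bigl(z_1^{\bm{c}}, \dots, z_n^{\bm{c}}, \ddl{z_1^{\bm{c}}}, \dots, \ddl{z_n^{\bm{c}}}\bigr) = 0
\]
on $N_{\bm{c}}$. Applying $\sigma_\gamma^{-1}$ coefficientwise produces a polynomial $P^{\sigma^{-1}} \in K[X_1, \dots, X_{2n}]$, still nonzero since $\sigma_\gamma^{-1}$ is a field isomorphism; and applying the ring homomorphism $\sigma_\gamma^{-1}$ to the displayed identity, using that it maps $z_j^{\bm{c}} \mapsto z_j$ and $\ddl{z_j^{\bm{c}}} \mapsto \ddl{z_j}$, gives $P^{\sigma^{-1}}(z_1, \dots, z_n, \ddl{z_1}, \dots, \ddl{z_n}) = 0$ on $N_{\bm{c}_\ast}$, contradicting Lemma~\ref{funcTrascLemma_fixed_point}.

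I expect the only delicate point to be the bookkeeping already indicated in the first paragraph: one must check that the $uf+vg$-adjusted definition of $z_j^{\bm{c}}$ genuinely coincides with the ordinary analytic continuation of the fixed branch $z_j$ along $\gamma$, so that continuing backwards returns $z_j$ itself (and not $z_j$ plus a period), and that the same holds for $f^{\bm{c}}, g^{\bm{c}}$ and their $\lambda$-derivatives; once this identification is made, everything else is a routine transport of structure. A minor technical caveat is that all the germs above are a priori only meromorphic (because of denominators), but since $N_{\bm{c}}$ may be shrunk at will and $\gamma$ is compact, all the continuations exist and are meromorphic on a neighborhood of $\bm{c}$, so the expressions make sense as meromorphic functions there.
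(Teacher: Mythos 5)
Your proposal is correct and takes essentially the same route as the paper: the paper's proof is the one-line observation that any algebraic relation on $N_{\bm{c}}$ can be continued back (along the path used to define $f^{\bm{c}}, g^{\bm{c}}, z_j^{\bm{c}}$) to a neighborhood of $\bm{c}_\ast$, contradicting Lemma~\ref{funcTrascLemma_fixed_point}. You simply make explicit the transport-of-structure bookkeeping (the field isomorphism induced by continuation, its compatibility with $\partial/\partial\lambda$, and the harmlessness of the integer period adjustments $uf+vg$, which in any case lie in the base field), which the paper leaves implicit.
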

\begin{proof}
Any algebraic relation can be continued to a neighborhood $N_\ast$ of some
$\bm{c}_\ast \in \lambda^{-1} (\Lambda)$, contradicting Lemma \ref{funcTrascLemma_fixed_point}.
\end{proof}
Furthermore, the lattice $\Pi_{\mathcal{E}_\lambda}$ is still generated by $f ^{\bm{c}}$ and $g ^{\bm{c}}$ on $N_{\bm{c}}$; see Lemma 6.1
of \cite{MZ12} or Lemma 4.1 of \cite{MZ10}.
Now fix $\bm{c} \in \widehat{\mathcal{C}}$ and $N_{\bm{c}} \subseteq \widehat{\mathcal{C}}$.
Since we are avoiding singular points and points on
which the differential of $\lambda$ vanishes, $\lambda$ gives an analytic isomorphism $\lambda : N_{\bm{c}} \to \lambda(N_{\bm{c}} )$.
Therefore, we can view $z _1^{\bm{c}} , \dots , z _n^{\bm{c}} , f ^{\bm{c}} , g ^{\bm{c}}$ as analytic functions on $\lambda(N_{\bm{c}} )$.

\section{The main estimate}
\label{SectionMainEstimate}
Let us fix a $\bm{c}_\ast \in \widehat{\mathcal{C}}$  and a neighborhood $N_{\bm{c}_\ast}$ of $\bm{c}_\ast$ in $\widehat{\mathcal{C}}$.
Moreover, let us fix a closed disc $D_{\bm{c}_\ast}$ inside $\lambda (N_{\bm{c}_\ast})$, 
centered in $\lambda(\bm{c}_\ast)$.

In Section \ref{Section30} we defined the analytic
functions $f , g$ and $z_i$ on a small neighborhood of $\bm{c}_\ast$, which we may assume to be $N_{\bm{c}_\ast}$ up to shrinking, as a basis for the local system of periods of $\legsch_\lambda$ and elliptic logarithms of the $P_j$.
For the rest of this section, we suppress the dependence on $\bm{c}_\ast$ and $D$ in the notation, since they are fixed.\\
We use Vinogradov’s $\ll$ notation. The implied constants will always depend on $D$.\\

Given $\bm{a} \in \Z^n \setminus \{0\}$, we define $D(\bm{a})$ to be the set of $\lambda \in D$ such that 
\begin{equation}
\label{complex_tangency_condition_eqn}
\left\lbrace
\begin{aligned} 
&\sum_{i=1}^{n} a_i z_i(\lambda) = a_{n+1} f(\lambda) + a_{n+2} g(\lambda), \\
&\sum_{i=1}^{n} a_i \ddl{z_i} (\lambda) = a_{n+1} \ddl{f}(\lambda) + a_{n+2} \ddl{g}(\lambda),
\end{aligned}
\right.
\end{equation} 
for some $a_{n+1}, a_{n+2} \in \Z$.
Notice that $\lambda(\bm{c}) \in D(\bm{a})$ corresponds to $\mathcal{C}$ being tangent to the subgroup $\sum a_i P_i = O$ in $\bm{c}$.\\

For a vector of integers $\bm{a}$ we denote by $\abs{\bm{a}} = \max \{\, \abs{a_1} , \dots , \abs{a_n}\}$.

\begin{prop}
\label{MainEstProp}
Under the hypotheses of Theorem \ref{Main_Thm}, for every $\varepsilon > 0$ we have
$\abs{D(\bm{a})} \ll_\varepsilon \abs{\bm{a}}^\varepsilon$, for every non-zero $\bm{a} \in \Z^n$.
\end{prop}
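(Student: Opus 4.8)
The plan is to bound $\abs{D(\bm{a})}$ using the point-counting machinery of Proposition \ref{ominHPprop}, exploiting the transcendence result of Lemma \ref{funcTrascLemma_fixed_point}. First I would set up the relevant definable family. Consider the map $\lambda \mapsto (u_1(\lambda), \dots, u_n(\lambda), v_1(\lambda), \dots, v_n(\lambda), \ddl{u_j}(\lambda), \ddl{v_j}(\lambda), \dots)$, where the $u_j, v_j$ are the real coordinates of $z_j$ with respect to the basis $f, g$, i.e. $z_j = u_j f + v_j g$, together with the corresponding derivatives, obtained by differentiating the period relations. Restricting $\lambda$ to the compact disc $D$ and using that the functions $f, g, z_j$ (and hence $u_j, v_j$ and their $\lambda$-derivatives) are real-analytic on a neighborhood of $D$, the image is a compact definable set $\Sigma \subseteq \R^{4n}$ in the o-minimal structure $\R_{\mathrm{an}}$. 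The condition that $\lambda \in D(\bm{a})$ translates, via \eqref{complex_tangency_condition_eqn}, into the statement that the point of $\Sigma$ parametrized by $\lambda$ lies on a linear subvariety cut out by equations with integer coefficients $a_1, \dots, a_{n+2}$: writing out $\sum a_i z_i = a_{n+1} f + a_{n+2} g$ in the $f, g$ basis gives $\sum a_i u_i = a_{n+1}$ and $\sum a_i v_i = a_{n+2}$, and likewise the derivative relation splits into two further linear equations in the derivative coordinates with the same $a_i$ (and the same $a_{n+1}, a_{n+2}$, which are then forced). So each $\lambda \in D(\bm{a})$ gives a rational point of height $\ll \abs{\bm{a}}$ on an auxiliary variety, parametrizing a linear section of $\Sigma$.

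The key step is then to show $D(\bm{a})$ is finite for each fixed $\bm{a}$, so that $\abs{\pi_2(\Gamma)}$ — the number of distinct points of $\Sigma$ arising — equals $\abs{D(\bm{a})}$. This is where Lemma \ref{funcTrascLemma_fixed_point} enters: if $D(\bm{a})$ were infinite, it would have an accumulation point in $D$, and since the defining functions in \eqref{complex_tangency_condition_eqn} are analytic, the two relations would hold identically in $\lambda$ on $N_{\bm{c}_\ast}$; but such an identity is a nontrivial algebraic (indeed linear) dependence among $z_1, \dots, z_n, \ddl{z_1}, \dots, \ddl{z_n}$ over $\C(\lambda, f, g, \ddl{f}, \ddl{g})$ — in fact the two relations force $\sum a_i z_i - a_{n+1}f - a_{n+2}g \equiv 0$, contradicting the hypothesis of Theorem \ref{Main_Thm} directly (or Lemma \ref{funcTrascLemma_fixed_point}). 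Hence $\abs{D(\bm{a})} < \infty$.

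Now I would run the dichotomy of Proposition \ref{ominHPprop}. Fix $\varepsilon > 0$ and let $c = c(Z, \varepsilon)$ be the constant it provides, where $Z$ is the definable family whose fibers record the linear sections of $\Sigma$ (coordinates: the $a_i$ rescaled appropriately as the "rational" block, the point of $\Sigma$ as the last block). Suppose for contradiction that $\abs{D(\bm{a})} > c\abs{\bm{a}}^\varepsilon$ for some $\bm{a}$. Then Proposition \ref{ominHPprop} yields a continuous definable $\delta:[0,1]\to Z_t$ with $\pi_1\circ\delta$ semi-algebraic and real-analytic on $(0,1)$, $\pi_2\circ\delta$ non-constant, and $\pi_2(\delta(0))$ a genuine point of $\Sigma$ coming from some $\lambda_0 \in D(\bm{a})$. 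The image of $\pi_2 \circ \delta$ is then a non-constant semi-algebraic (in fact the composition lands in $\Sigma$, and $\Sigma$ is a real-analytic curve, being the image of the one-dimensional $D$) arc inside $\Sigma$ on which the linear relations persist with coefficients varying semi-algebraically. Pulling back along the parametrization $\lambda \mapsto \Sigma$, which is a real-analytic isomorphism onto its image near $\lambda_0$, this produces a real-analytic arc of $\lambda$-values — hence a non-trivial open piece, by analytic continuation an identity on $N_{\bm{c}_\ast}$ — along which relations of the form \eqref{complex_tangency_condition_eqn} hold with the coefficients $a_{n+1}, a_{n+2}$ (and possibly the $a_i$) moving algebraically in $\lambda$. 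Clearing denominators, this is a nontrivial algebraic relation among $z_1,\dots,z_n,\ddl{z_1},\dots,\ddl{z_n}$ over $\C(\lambda, f, g, \ddl{f}, \ddl{g})$, contradicting Lemma \ref{funcTrascLemma_fixed_point}. Therefore $\abs{D(\bm{a})} \leq c\abs{\bm{a}}^\varepsilon$ for all non-zero $\bm{a}$, which is the claim.

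The main obstacle is the last step: carefully arguing that the semi-algebraic block $\pi_1 \circ \delta$ — which a priori records only that some linear relation holds, with the $a_i$ possibly appearing with non-constant ratios — actually forces a genuine \emph{algebraic} dependence among the $z_j$ and their derivatives over $\C(\lambda, f, g, \ddl f, \ddl g)$, rather than merely a relation with transcendental (semi-algebraic but not algebraic in $\lambda$) coefficients. The resolution is that $\pi_1 \circ \delta$ is semi-algebraic \emph{as a function of the parameter}, but once composed with the semi-algebraic parametrization of the $\lambda$-axis piece of $\Sigma$ (possible because that piece is a semi-algebraic curve), the coefficients become algebraic functions of $\lambda$; combined with the real-analyticity on $(0,1)$ and analytic continuation, one upgrades to an identity over $\overline{\C(\lambda)}$, and then absorbs everything into $\C(\lambda, f, g, \ddl f, \ddl g)$ since $f, g$ are already in that field and the $a_i$ can be taken in $\overline{\C(\lambda)}$ — after clearing, a nontrivial polynomial relation with coefficients in $\C(\lambda, f, g, \ddl f, \ddl g)$ results. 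Making the bookkeeping of which coordinates are "rational" (the $M_2$ block) versus "analytic" (the $N$ block) precise, so that $\pi_2(\Gamma)$ really counts distinct elements of $D(\bm{a})$ and the finiteness of $D(\bm{a})$ guarantees $\Gamma$ can be taken to have $\abs{\pi_2(\Gamma)} = \abs{D(\bm{a})}$, is the other place where care is needed.
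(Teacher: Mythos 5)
Your overall architecture (Betti coordinates plus their $\lambda$-derivatives, the definable set $\Sigma=\Theta(D)$ in $\R_{\mathrm{an}}$, the Habegger--Pila dichotomy of Proposition \ref{ominHPprop}, contradiction with functional transcendence) is the paper's, but two of your steps have genuine gaps. First, the passage from counting points of $\Sigma$ to counting elements of $D(\bm{a})$: you claim that finiteness of $D(\bm{a})$ lets you take $\abs{\pi_2(\Gamma)}=\abs{D(\bm{a})}$. Finiteness is irrelevant here: nothing makes $\Theta$ injective on $D(\bm{a})$, and Proposition \ref{ominHPprop} only controls the number of \emph{distinct} $\Sigma$-points, i.e.\ $\abs{\Theta(D(\bm{a}))}$. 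To deduce a bound on $\abs{D(\bm{a})}$ you need a bound on the fibers of $\Theta$ restricted to $D(\bm{a})$ that is \emph{uniform in $\bm{a}$} (a fiber bound growing with $\abs{\bm{a}}$ would ruin the conclusion $\ll_\varepsilon\abs{\bm{a}}^\varepsilon$). The paper supplies exactly this via Lemma \ref{uniform_bound_linear_rel_lemma}: all $\lambda$ in a given fiber satisfy the same relation \eqref{Single_Linear_cond_Eqn} with the same integers $a_{n+1}=\sum a_iu_i$, $a_{n+2}=\sum a_iv_i$, and the number of such $\lambda\in D$ is at most a constant $\gamma$ depending only on $D$, whence $\abs{D(\bm{a})}\leq\gamma\,\abs{\Sigma^{(1)}(\bm{a},T_0)}$. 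This nontrivial input (from \cite{BarroCapuano17}) is missing from your proposal, and the substitute you offer does not do the job.

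Second, your derivation of the contradiction from the definable path $\delta$ rests on the claim that the relevant piece of $\Sigma$ is a semi-algebraic curve, so that the coefficients become \emph{algebraic} functions of $\lambda$. That is false: $\Sigma$ is the image of $D$ under the transcendental map $\Theta$ and is only definable in $\R_{\mathrm{an}}$, and Proposition \ref{ominHPprop} asserts semi-algebraicity only of the coefficient block $\pi_1\circ\delta$ as a function of the parameter; the correspondence between the parameter and $\lambda$ goes through $\Theta$ and is not algebraic, so the coefficients are a priori only definable, not algebraic, in $\lambda$. The paper's argument never needs them to be: it uses only that the image of $\pi_1\circ\delta$ lies on a real algebraic curve, so on a suitable infinite connected $D'\subseteq D$ the $n+2$ coefficient functions satisfy $n+1$ independent algebraic relations with constant coefficients; together with the two relations coming from \eqref{real_linear_condition_eqn} this forces $\mathrm{trdeg}_F F\left(z_1,\dots,z_n,\ddl{z_1},\dots,\ddl{z_n}\right)\leq 2n-1$ over $F=\C\left(\lambda,f,g,\ddl{f},\ddl{g}\right)$, contradicting Lemma \ref{funcTrascLemma}. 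Some such dimension/transcendence-degree count is needed; the ``coefficients algebraic in $\lambda$'' shortcut is not available. (Your preliminary observation that $D(\bm{a})$ is finite for fixed $\bm{a}$ is fine, but note that the integers $a_{n+1},a_{n+2}$ may vary with $\lambda$, so one must first use compactness of $D$ to reduce to a fixed pair before invoking analytic continuation.)
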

Before proving this estimate, let us collect a
few more preliminary facts.\\

Let us define
\[
 \Delta = f \overline{g} - \overline{f} g ,
\]
which does not vanish on $D$, since $f (\lambda)$ and $g (\lambda)$ are $\R$-linearly independent for every $\lambda \in D$.
Moreover, for every $j=1, \dots, n$ let $u_j, v_j$ be the functions from $D$ to $\C$ defined by
\begin{align*}
u_j = \frac{z_j \overline{g} - \overline{z_j} g}{\Delta} , \qquad 
v_j = \frac{z_j \overline{f} - \overline{z_j} f}{\Delta}. 
\end{align*}
One can easily check that these functions are real-valued and, furthermore, that we have
\[
z_j = u_j f + v_j g;
\]
$u_j$ and $v_j$ are therefore the so-called \emph{Betti coordinates} of $P_j$.
Taking derivatives on both sides yields
\[
\ddl{z_j} = \ddl{u_j} f  + \ddl{v_j} g + u_j \ddl{f} + v_j \ddl{g}.
\]
If we view $D$ as a subsets of $\R^2 $, then $u _j, v _j, \ddl{u_j}, \ddl{v_j}$ are real analytic functions on a neighborhood of $D$.\\

Define now $\Theta : D \to \R^{4n}$ by
\[
\lambda \mapsto \left(u_1(\lambda), v_1(\lambda), \dots, u_n (\lambda), v_n(\lambda), \ddl{u_1}(\lambda), \ddl{v_1}(\lambda), \dots, \ddl{u_n}(\lambda), \ddl{v_n}(\lambda)\right)
\]
and set $\Sigma = \Theta(D)$. Since $\Theta$ is analytic and $D$ is a closed disc, we have that $\Sigma$ is definable in $\R_{\mathrm{an}} $, as image of a definable set under a definable function.

\begin{lemma}[\cite{BarroCapuano17}, Lemma 4.2]
\label{uniform_bound_linear_rel_lemma}
Under the hypotheses of Theorem \ref{Main_Thm}, there exists a constant $\gamma$ (depending
only on $D$) such that, for every choice of integers 
$a_1, \dots, a_{n + 2}$, not all zero, the number of $\lambda$ in $D$
with
\begin{equation}
\label{Single_Linear_cond_Eqn}
a_1 z_1 (\lambda ) + \dots + a _n z_n (\lambda ) = a _{n + 1} f (\lambda ) + a _{n + 2} g (\lambda ),
\end{equation}
is at most $\gamma$.
\end{lemma}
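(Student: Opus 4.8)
The plan is to re-express the linear relation \eqref{Single_Linear_cond_Eqn} in terms of the Betti coordinates $u_j,v_j$ and to draw the uniformity in $\bm{a}$ from the o-minimal uniform-boundedness statement, Proposition \ref{Def_Family_uniform_bound_prop}. First I would use $z_j=u_jf+v_jg$ to rewrite \eqref{Single_Linear_cond_Eqn} as
\[
\left(\sum_{j=1}^{n} a_j u_j(\lambda)-a_{n+1}\right)f(\lambda)+\left(\sum_{j=1}^{n} a_j v_j(\lambda)-a_{n+2}\right)g(\lambda)=0 .
\]
Since $u_j,v_j$ are real-valued and the $a_i$ are integers, the two coefficients above are real numbers, while $f(\lambda)$ and $g(\lambda)$ are $\R$-linearly independent for every $\lambda\in D$. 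Hence a point $\lambda\in D$ satisfies \eqref{Single_Linear_cond_Eqn} if and only if
\[
\sum_{j=1}^{n} a_j u_j(\lambda)=a_{n+1}\qquad\text{and}\qquad\sum_{j=1}^{n} a_j v_j(\lambda)=a_{n+2}.
\]

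Next I would package all these conditions into a single definable family. Put
\[
U=\left\{(\bm{t},\lambda)\in\R^{n+2}\times D\st \sum_{j=1}^{n} t_j u_j(\lambda)=t_{n+1},\ \sum_{j=1}^{n} t_j v_j(\lambda)=t_{n+2}\right\},
\]
regarded as a family of subsets of $D\subseteq\R^2$ with parameter $\bm{t}\in\R^{n+2}$. Because $u_j$ and $v_j$ are real analytic on a neighbourhood of the compact disc $D$, their graphs over $D$ are definable in $\R_{\mathrm{an}}$, and the two equations cutting out $U$ are polynomial in $\bm{t}$ and in these graphs; projecting away the graph coordinates shows that $U$ is a definable set, hence a definable family. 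By Proposition \ref{Def_Family_uniform_bound_prop} there is an integer $\gamma$, depending only on $U$ (hence only on $D$, everything else being fixed), such that every fibre $U_{\bm{t}}$ has at most $\gamma$ connected components.

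It then remains to check that, for a non-zero integer tuple $\bm{a}=(a_1,\dots,a_{n+2})$, the fibre $U_{\bm{a}}$ — which by the first step is exactly the solution set of \eqref{Single_Linear_cond_Eqn} in $D$ — is finite, so that its number of connected components equals its number of points. Consider $F_{\bm{a}}(\lambda)=\sum_{j=1}^{n} a_j z_j(\lambda)-a_{n+1}f(\lambda)-a_{n+2}g(\lambda)$, an analytic function on a neighbourhood of $D$. If $F_{\bm{a}}\equiv 0$ and some $a_j$ with $j\le n$ is non-zero, then applying $\exp_\lambda$ produces a relation of the form \eqref{linear_cond_eqn} on $N_{\bm{c}_\ast}$, hence, by analytic continuation along $\widehat{\mathcal{C}}$, identically on $\mathcal{C}$, contradicting the hypothesis of Theorem \ref{Main_Thm}; this is precisely the argument already used in the proof of Lemma \ref{funcTrascLemma_fixed_point}. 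If instead $a_1=\dots=a_n=0$, then $F_{\bm{a}}\equiv 0$ reads $a_{n+1}f+a_{n+2}g\equiv 0$ with $(a_{n+1},a_{n+2})\neq 0$, contradicting the pointwise $\R$-linear independence of $f$ and $g$ on $D$. Hence $F_{\bm{a}}\not\equiv 0$; being analytic it has isolated zeros, and as $D$ is compact, $U_{\bm{a}}$ is finite. Therefore $U_{\bm{a}}$ has at most $\gamma$ elements, which is the asserted bound.

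I expect the crux to be the uniformity rather than the finiteness: for a single $\bm{a}$ the finiteness of the solution set is immediate from analyticity, but the number of zeros of an analytic function in a fixed disc is not controlled once the coefficients are allowed to grow, so the statement genuinely rests on viewing the relations as one definable family and invoking Proposition \ref{Def_Family_uniform_bound_prop}. The points needing a little care are that $U$ is really definable (which uses that $u_j,v_j$ extend analytically past the compact $D$) and that the passage from a component count to a point count is legitimate, which is exactly where the per-$\bm{a}$ finiteness enters.
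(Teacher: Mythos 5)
The paper does not prove this lemma at all — it imports it verbatim from \cite{BarroCapuano17}, Lemma 4.2 — so there is no internal proof to compare against; your argument is correct and is essentially the standard one behind the cited result: pass to Betti coordinates, observe that the solution sets are fibers of a definable family in $\R_{\mathrm{an}}$, invoke Proposition \ref{Def_Family_uniform_bound_prop} for the uniform bound, and use the non-degeneracy hypothesis of Theorem \ref{Main_Thm} (plus the identity theorem and $\R$-linear independence of $f,g$) to see that each integer fiber is finite, so components are points. This is exactly the toolkit the paper itself deploys in Section \ref{SectionMainEstimate}, so your reconstruction is consistent with the quoted source.
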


In what follows $(u_1, v_1, \dots, r_n, s_n)$ will denote the coordinates in $\R^{4n}$.\\
For $T > 0$ , let us denote by $\Sigma^{(1)} (\bm{a}, T )$ the set of points of $\Sigma$ of coordinates $(u_1, v_1, \dots, r_n, s_n)$ such that
there exist $a_{n + 1}, a_{n + 2} \in \Z \cap  [- T , T ]$ with
\begin{equation}
\label{real_linear_condition_eqn}
\left\{
\begin{aligned}
a_1 u_1 + \dots + a_n u_n &= a_{n+1}, \\
a_1 v_1 + \dots + a_n v_n &= a_{n+2}, \\
a_1 r_1 + \dots + a_n r_n &= 0, \\
a_1 s_1 + \dots + a_n s_n &= 0.
\end{aligned}
\right.
\end{equation}

\begin{lemma}
\label{Habegger_bound_lemma}
Under the hypotheses of Theorem \ref{Main_Thm}, for every $\varepsilon > 0$, we have
\[
\abs{\Sigma^{(1)}(\bm{a}, T)} \ll _\varepsilon (\max\{\abs{\bm{a}}, T\})^\varepsilon .
\]

\end{lemma}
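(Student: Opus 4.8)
The plan is to derive the estimate from the point counting result Proposition \ref{ominHPprop}, using the functional transcendence statement Lemma \ref{funcTrascLemma} to exclude its ``algebraic'' conclusion. First I set up the relevant definable family. Put $N=\max\{\abs{\bm{a}},T\}$ and $\bm{b}=\bm{a}/N\in[-1,1]^n$. For $\bm{p}=(u_1,v_1,\dots,r_n,s_n)\in\Sigma$ and $\bm{\beta}\in\R^n$ write $L_u(\bm{\beta},\bm{p})=\sum_i\beta_i u_i$, and analogously $L_v,L_r,L_s$; these are bilinear, hence semi-algebraic. Let $W$ be the set of triples $(\bm{\beta},x,\bm{p})\in[-1,1]^n\times\R^2\times\R^{4n}$ with $\bm{p}\in\Sigma$, $L_r(\bm{\beta},\bm{p})=L_s(\bm{\beta},\bm{p})=0$ and $x=(L_u(\bm{\beta},\bm{p}),L_v(\bm{\beta},\bm{p}))$. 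Since $\Sigma$ is definable in $\R_{\mathrm{an}}$ and the other conditions are semi-algebraic, $W$ is a definable family (with $M_1=n$, $M_2=2$ and last factor $\R^{4n}$) which does \emph{not} depend on $\bm{a}$ or $T$. By \eqref{real_linear_condition_eqn}, every $\bm{p}\in\Sigma^{(1)}(\bm{a},T)$ lies in $\pi_2(\Gamma)$, where $\Gamma$ is the set of $(x,\bm{p})\in W_{\bm{b}}$ with $x\in\tfrac1N\Z^2$ and $\abs{x}\leq T/N$: the last two equations in \eqref{real_linear_condition_eqn} give $L_r(\bm{b},\bm{p})=L_s(\bm{b},\bm{p})=0$, and the first two give $x=(a_{n+1}/N,a_{n+2}/N)$. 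As $H(x)\leq N$ for such $x$, we have $\Gamma\subseteq W_{\bm{b}}^{\sim}(\Q,N)$, so $\abs{\Sigma^{(1)}(\bm{a},T)}\leq\abs{\pi_2(\Gamma)}$.

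Now fix $\varepsilon>0$ and let $c=c(W,\varepsilon)$ be the constant of Proposition \ref{ominHPprop}, which is independent of $\bm{a}$ and $T$. If $\abs{\pi_2(\Gamma)}>cN^\varepsilon$, then Proposition \ref{ominHPprop}, applied with $t=\bm{b}$ and height bound $N$, produces a continuous definable $\delta:[0,1]\to W_{\bm{b}}$ such that $\pi_1\circ\delta$ is semi-algebraic and real analytic on $(0,1)$, $\pi_2\circ\delta$ is non-constant, and $\pi_2(\delta(0))\in\pi_2(\Gamma)$. It is therefore enough to show that no such $\delta$ exists; this forces $\abs{\pi_2(\Gamma)}\leq cN^\varepsilon$, hence $\abs{\Sigma^{(1)}(\bm{a},T)}\ll_\varepsilon(\max\{\abs{\bm{a}},T\})^\varepsilon$.

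Assume such a $\delta$ exists. Then $\pi_2\circ\delta$ is a non-constant definable arc contained in $\Sigma=\Theta(D)$; restricting to a subinterval and lifting it through the analytic map $\Theta$ (the $\Theta$-preimage of a one-dimensional definable set is at least one-dimensional, so it contains a $1$-cell) produces a non-constant real analytic arc $\lambda:(0,1)\to D$ along which, since $\delta$ takes values in $W_{\bm{b}}$, the homogeneous relations $\sum_i a_i\ddl{u_i}=\sum_i a_i\ddl{v_i}=0$ hold, while $\tau\mapsto(\sum_i a_i u_i,\sum_i a_i v_i)$ traces the semi-algebraic curve $\pi_1\circ\delta$ and takes an integral value at the boundary point coming from $\delta(0)$. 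Differentiating the identity $\sum_i a_i z_i=(\sum_i a_i u_i)f+(\sum_i a_i v_i)g$ along the arc, and using the expressions for $\ddl{u_j}$, $\ddl{v_j}$ in terms of the derivatives of $z_j,f,g$ together with the $\R$-linear independence of $f(\lambda),g(\lambda)$, one deduces that $\sum_i a_i u_i$ and $\sum_i a_i v_i$ are constant along the arc. Thus $\sum_i a_i z_i=a_{n+1}f+a_{n+2}g$ on the arc for fixed integers $a_{n+1},a_{n+2}$; as $z_1,\dots,z_n,f,g$ are holomorphic in $\lambda$ and a non-degenerate arc has an accumulation point in $D$, the identity principle extends this relation to all of $D$. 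We thus obtain a non-trivial relation $\sum_i a_i z_i=a_{n+1}f+a_{n+2}g$ with $\bm{a}\neq0$, contradicting Lemma \ref{funcTrascLemma}.

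The subtle step is this last one: converting the analytic information carried by $\delta$ — that the linearised relations of \eqref{real_linear_condition_eqn} hold along the lifted arc — into an \emph{identical} relation on $D$. One must exploit the precise shape of $r_j=\ddl{u_j}$, $s_j=\ddl{v_j}$ and the $\R$-linear independence of the periods to obtain the constancy of $\sum_i a_i u_i$, $\sum_i a_i v_i$ along the arc before invoking holomorphicity and Lemma \ref{funcTrascLemma}; here the semi-algebraicity of $\pi_1\circ\delta$ and the rational point $\pi_2(\delta(0))\in\pi_2(\Gamma)$ may also be needed, and Lemma \ref{uniform_bound_linear_rel_lemma} is convenient for controlling the lifted arc. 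By contrast, the definability of $W$, the inclusion $\Sigma^{(1)}(\bm{a},T)\subseteq\pi_2(\Gamma)$, and the normalisation that places $\max\{\abs{\bm{a}},T\}$ into the height are routine.
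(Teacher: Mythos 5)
Your proof is correct in substance, but it reaches the bound by a genuinely different route than the paper. The paper takes $W\subseteq\R^{n+2}\times\Sigma$ with \emph{all} the coefficients $\alpha_1,\dots,\alpha_{n+2}$ as free variables, counts them as the rational block of height at most $T'=\max\{\abs{\bm{a}},T\}$ (no family parameter is used), and, when Proposition \ref{ominHPprop} produces an arc $\delta$, the coefficients vary along it: semi-algebraicity of $\pi_1\circ\delta$ gives $n+1$ independent algebraic relations among the $\alpha_i$, the defining equations of $W$ give two more linking them to $z_j,\ddl{z_j},f,g$, and a transcendence-degree count ($n+3$ relations among the $3n+2$ functions over $F=\C\left(\lambda,f,g,\ddl{f},\ddl{g}\right)$, forcing $\mathrm{trdeg}\leq 2n-1$) contradicts Lemma \ref{funcTrascLemma}. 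You instead put the normalized vector $\bm{a}/N$ into the parameter block (using the uniformity of $c(Z,\varepsilon)$ in $t$, which Proposition \ref{ominHPprop} does provide) and count only the two rationals $(a_{n+1}/N,a_{n+2}/N)$; since the coefficients are then \emph{fixed} along the arc, your contradiction is more direct: the vanishing of $\sum a_i\ddl{u_i}$ and $\sum a_i\ddl{v_i}$ along a $C^1$ lift of the arc forces $\sum a_iu_i$ and $\sum a_iv_i$ to be constant there (your differentiation argument is right, because $dU/d\tau$ and $dV/d\tau$ are real and $f,g$ are $\R$-linearly independent), and the identity principle upgrades this to $\sum a_iz_i\equiv c_1f+c_2g$ on $D$, contradicting Lemma \ref{funcTrascLemma}. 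Your version needs a differentiable (cell-decomposition) lift of the arc, where the paper only needs an infinite set and analytic continuation of algebraic relations, but in exchange the final contradiction is a plain linear relation rather than a transcendence-degree count; the paper's family also bounds the larger set of points satisfying \emph{some} bounded-height tangency relation, a strength not needed here. Two small points to tidy in your write-up: $N$ should be taken to be an integer (e.g.\ replace $T$ by $\lceil T\rceil$) so that $(a_{n+1}/N,a_{n+2}/N)$ is rational of height at most $N$; and you need not (and, from the closure point $\delta(0)$ alone, cannot quite) show the constants are integers --- constancy of $\sum a_iu_i,\sum a_iv_i$ with arbitrary complex constants already contradicts Lemma \ref{funcTrascLemma}.
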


\begin{proof}
Let us set $T' = \max \{ \abs{\bm{a}}, T \}$ and fix $\varepsilon > 0 $.\\
Let us define $W$ to be the set of $(\alpha_1, \dots, \alpha_{n+2}, u_1, \dots, s_n) \in \R^{n+2} \times \Sigma$ such that
\begin{equation}
\label{real_linear_condition_extended_eqn}
\left\{
\begin{aligned}
\alpha_1 u_1 + \dots + \alpha_n u_n &= \alpha_{n+1}, \\
\alpha_1 v_1 + \dots + \alpha_n v_n &= \alpha_{n+2}, \\
\alpha_1 r_1 + \dots + \alpha_n r_n &= 0, \\
\alpha_1 s_1 + \dots + \alpha_n s_n &= 0;
\end{aligned}
\right.
\end{equation}
this is a definable set in $\R_{\mathrm{an}}$.
Using the notation introduced in \eqref{height_stratification_defn_eqn},
we denote by $W^{\sim}(\Q, T')$
the set of tuples $(\alpha_1, \dots , \alpha_{n + 2}, u_1 , \dots, s_n) \in W$ 
with rational $\alpha_1, \dots , \alpha_{n+2}$ of height at most $T'$.
Note that
$\pi_2 (W^{\sim}(\Q, T') ) \supseteq \Sigma^{(1)} (\bm{a}, T )$, where $\pi_2: W \to \Sigma$ is the projection to $\Sigma$.
Then, $\abs{\Sigma^{(1)} (\bm{a}, T )} \leq \abs{\pi_2 (W^{\sim}(\Q, T'))}$.
We claim that $\abs{\pi_2 (W^{\sim}(\Q, T'))} \ll_\varepsilon (T')^\varepsilon $.

Assume by contradiction that this is not the case; then, by Proposition \ref{ominHPprop}, there exists a continuous
definable $\delta: [0, 1] \to W$ such that $\delta_1 := \pi_1 \circ \delta: [0, 1] \to \R^{n + 2}$ is semi-algebraic and
$\delta_2 := \pi_2 \circ \delta: [0, 1] \to \Sigma$ is non-constant. Therefore, there is a connected infinite subset $E \subseteq [0, 1]$
such that $\delta_1 (E )$ is contained in a real algebraic curve and $\delta_2 (E )$ has positive dimension. Then,
there exists a connected infinite $D' \subseteq D$ with $\Theta(D') \subseteq \delta_2 (E )$.\\
The coordinate functions $\alpha_1, \dots , \alpha_{n + 2}$ on $D'$ satisfy $n + 1$ independent 
algebraic relations with coefficients in $\C$. Moreover, the relations given by
\eqref{real_linear_condition_extended_eqn} translate to

\begin{equation*}
\left\lbrace
\begin{aligned}
\alpha_1 z_1 + \dots + \alpha_n z_n &= \alpha_{n+1} f + \alpha_{n+2} g, \\
\alpha_1 \ddl{z_1} + \dots + \alpha_n \ddl{z_n} &= \alpha_{n+1} \ddl{f} + \alpha_{n+2} \ddl{g},
\end{aligned}
\right.
\end{equation*}
adding 2 algebraic relations among the $\alpha_1, \dots, \alpha_{n+2}$ and the $z_j, \ddl{z_j}, f, g, \ddl{f}, \ddl{g}$.\\
Thus, on $D'$, and therefore by continuation on the whole $D$,
the $n + 2 + 2n$ functions $\alpha_1, \dots, \alpha_{n+2}, z_1, \dots, z_n, \ddl{z_1}, \dots, \ddl{z_n}$
satisfy $n+1+2$ independent algebraic relations over $F=\C\left(\lambda, f, g, \ddl{f}, \ddl{g}\right)$. Therefore,
\[ 
\mathrm{trdeg}_F F\left(z_1, \dots, z_n, \ddl{z_1},\dots, \ddl{z_n}\right) \leq 2n - 1.
\]
This contradicts Lemma \ref{funcTrascLemma}, and proves the claim and the lemma.

\end{proof}
\begin{proof}[Proof of Proposition \ref{MainEstProp}]
If $t \in D (\bm{a})$, then $\Theta(t )$ satisfies \eqref{real_linear_condition_eqn}
for some integers $a_{n + 1}$ and $a_{n + 2}$. 
Now, since
$D$ is compact, we have that the sets 
$z_i (D) , f (D) , g (D)$ are bounded, and
therefore we can choose $a_{n + 1}, a_{n + 2}$ bounded solely in terms of $\abs{\bm{a}}$.
Therefore, we have $\Theta(t ) \in \Sigma^{(1)} (\bm{a}, T_0)$, with $T_0 \ll \abs{\bm{a}}$. 
Now, by Lemma \ref{uniform_bound_linear_rel_lemma}, we have
$\abs{D (\bm{a})} \ll \abs{\Sigma^{(1)} (\bm{a}, T_0)}$ and the claim follows from Lemma \ref{Habegger_bound_lemma}.
\end{proof}

\section{Small generators of the relations lattices}
In this section we recall general facts about linear relations on elliptic curves.\\
For a point $(\xi_1, \dots, \xi_N ) \in \overline{\Q}^N$, the absolute logarithmic Weil height $h (\xi_1, \dots, \xi_N )$ is defined by
\[
h (\xi_1, \dots, \xi_N ) = \frac{1}{[\Q(\xi_1, \dots, \xi_N):\Q]} \sum_v \log \max \left\lbrace 1, \abs{\xi_1}_{v}, \dots, \abs{\xi_N}_{v} \right\rbrace,
\]
where $v$ runs over a suitably normalized set of valuations of $\Q(\xi_1, \dots, \xi_N)$.\\

We recall our running setting. We have an irreducible curve $\mathcal{C}$ embedded in the $n$th fibered power of the Legendre elliptic scheme $\legsch^n \to Y(2)$, not contained in a fiber or proper flat subgroup of $\legsch^n$.
We considered the subset $\hatcalC$, obtained by removing from $\mathcal{C}$ finitely many points, and showed that for any point $\bm{c} \in \widehat{\mathcal{C}}$, we can define generators of the period lattice $f(\bm{c}')$ and $g(\bm{c}')$, and the elliptic logarithms $z_i$ of the projections $P_i(\bm{c}') \in \lambda(\bm{c}')$, for $\bm{c}'$ in a neighborhood $N_{\bm{c}_\ast}$ of $\bm{c}$.
Moreover, we can look at such elliptic logarithms as a function of $\lambda$, so it does make sense to write $\ddl{z_i}$.\\

We define, for $\bm{c} \in \widehat{\mathcal{C}}$, the lattice of relations:
\begin{align*}
L(\bm{c}) &= \{(a_1, \dots, a _n) \in \Z^n \st a_1 P_1(\bm{c}) + \dots + a _n P_n(\bm{c}) = O(\lambda(\bm{c})) \in \legsch_{\lambda(\bm{c})} \}\\
&= \{\bm{a} \in \Z^n \st \sum_{i=1}^{n} a_i z_i(\lambda(\bm{c})) = a_{n+1} f(\lambda(\bm{c})) + a_{n+2} g(\lambda(\bm{c})) \text{ for some } a_{n+1}, a_{n+2} \in \Z \}.
\end{align*}
This is a sublattice of $\Z^n$, and it is of some positive rank $r$ whenever $\bm{c}$ is contained in a proper subgroup of $\legsch^n$.\\
We can also define the lattice of \emph{singular} relations:
\begin{align*}
L^{sing}(\bm{c}) &= \{(a_1, \dots, a _n) \in \Z^n \st a_1 P_1(\bm{c}) + \dots + a _n P_n(\bm{c}) = O(\lambda(\bm{c})) \text{ tangentially}\}\\
&=
\begin{multlined}[t] 
\left\lbrace  \bm{a} \in \Z^n \ST 
\exists a_{n+1}, a_{n+2} \in \Z :  
	\sum_{i=1}^{n} a_i z_i(\lambda(\bm{c})) = a_{n+1} f(\lambda(\bm{c})) + a_{n+2} g(\lambda(\bm{c})), \right.  \\
	\left. \sum_{i=1}^{n} a_i \ddl{z_i} (\lambda(\bm{c})) = a_{n+1} \ddl{f}(\lambda(\bm{c})) + a_{n+2} \ddl{g}(\lambda(\bm{c}))
\right\rbrace.
\end{multlined}
\end{align*}

Suppose now that $\bm{c} \in \hatcalC$ is one of the points considered in Theorem \ref{Main_Thm}, i.e. there exits $\bm{a} \in \Z^n\setminus \{0\}$ with
$ \varphi_{\bm{a}}(\bm{c}) = O(\lambda(\bm{c})) $
and $\mathcal{C}$ and $\ker (\varphi_{\bm{a}})$ intersect tangentially at $\bm{c}$.\\
In this situation we have that  the ranks of $L^{sing}(\bm{c})$ and $L(\bm{c})$ are both positive, and $\lambda(\bm{c})$ is an algebraic number, as $\mathcal{C}$ is defined over $\overline{\Q}$ and not contained in any proper algebraic subgroup of $\legsch^n$.
Under this condition, we would like to show that $L^{sing} (\bm{c})$ has a set of
generators with small norm $\abs{\bm{a}} = \max \{ a_1 , \dots, a_n \}$.
Unfortunately we are able to prove such a result only in the case when $\rk(L^{sing}(\bm{c})) = \rk(L(\bm{c}))$, but we will see that this will suffice.\\

By slight abuse of notation we will omit in the following the dependence on $\bm{c}$, writing $\lambda$ for $\lambda(\bm{c})$ and $P_1, \dots, P_n$ for $P_1(\bm{c}), \dots, P_n(\bm{c})$.\\
Suppose that $P_1, \dots, P_n \in \legsch_\lambda$ are 
defined over some finite extension $K$ of $\Q(\lambda)$ of degree $d = [K: \Q]$. 
Furthermore, assume that $P_1, \dots, P_n$ have
Néron–Tate height $\hat{h}$ at most $q \geq1$ (for the definition of Néron–Tate height, see for example
p. 255 of \cite{Masser88}).
In case the $P_i$ are all torsion, we have $\hat{h}(P_i)=0$, and we put $q=1$.\\

We recall the following lemma.

\begin{lemma}[\cite{BarroCapuano16}, Lemma 6.1, based on \cite{Masser88}]
\label{SmallGenerators_lemma}
Under the above hypotheses, there are generators $\bm{a}_1, \dots, \bm{a}_r$ of $L$ with
\[
\abs{\bm{a}_i} \leq \delta_1 d ^{\delta_2} (h (\lambda) + 1)^{2n} q ^{\frac{1}{2}(n - 1)},
\]
for some positive constants $\delta_1, \delta_2$ depending only on $n$.
\end{lemma}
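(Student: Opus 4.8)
The plan is to translate the statement into the geometry of numbers, with the N\'eron--Tate height pairing playing the role of the ambient metric; this is the strategy of \cite{Masser88}, so the work here will mostly consist in inserting the numerology of the Legendre family. I would write $\phi\colon\Z^n\to\legsch_\lambda(K)$ for the map $\bm a\mapsto a_1P_1+\dots+a_nP_n$, so that $L=\ker\phi$ and $\Gamma:=\phi(\Z^n)$ has rank $m:=n-r$, and I would let $\langle\,\cdot\,,\,\cdot\,\rangle$ denote the N\'eron--Tate pairing on $\legsch_\lambda(K)\otimes\R$. It is positive semidefinite with radical the torsion subgroup, so its pullback $\phi^{*}\langle\,\cdot\,,\,\cdot\,\rangle$ is a positive semidefinite form on $\R^n$ with radical exactly $L\otimes\R$; it descends to a positive definite form on $\R^n/(L\otimes\R)$ for which the image of $\Z^n$ is a lattice isomorphic to $\Gamma/\Gamma_{\mathrm{tors}}$ of covolume $\operatorname{Reg}(\Gamma)$. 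A short basis of $L$ will then be extracted from Minkowski's second theorem, once the covolume of $L$ is under control and its first minimum is bounded below — the latter being trivial, since $L\subseteq\Z^n$ forces it to be $\ge1$.

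First I would collect the three quantitative ingredients. (i) \emph{Upper bound for the regulator.} By Cauchy--Schwarz, $|\langle P_i,P_j\rangle|\le\sqrt{\hat h(P_i)\hat h(P_j)}\le q$, so Hadamard's inequality bounds every $k\times k$ minor of the Gram matrix $\big(\langle P_i,P_j\rangle\big)_{i,j}$ by $k^{k/2}q^{k}$; since $r\ge1$ forces $m\le n-1$ and $q\ge1$, this gives $\operatorname{Reg}(\Gamma)\ll_n q^{\,n-1}$, and a square root of such a determinant — which is what will enter when one passes from a covolume, or from a Cram\'er-type expression for the coefficients of a relation, back to the size $\abs{\bm a}$ — is $\ll_n q^{(n-1)/2}$. (ii) \emph{Lower bound for heights of non-torsion points.} Here I would invoke the explicit Lehmer-type lower bound for the Legendre family: a non-torsion point $P\in\legsch_\lambda(\overline\Q)$ whose field of definition has degree bounded in terms of $d$ satisfies $\hat h(P)\ge\kappa$ with $\kappa\gg_n d^{-c_1}(h(\lambda)+1)^{-c_2}$ for absolute constants $c_1,c_2$. (iii) \emph{Torsion bound.} Again by explicit bounds, $|\legsch_\lambda(K)_{\mathrm{tors}}|\ll_n d^{\,c_3}(h(\lambda)+1)^{c_4}$; this controls the index $[L^{\sharp}:L]$, where $L^{\sharp}:=(L\otimes\Q)\cap\Z^n$ is the primitive closure of $L$, because $L^{\sharp}/L$ embeds into $\Gamma_{\mathrm{tors}}$.

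Next I would combine these. The primitive sublattice $L^{\sharp}$ satisfies $\operatorname{covol}(L^{\sharp})\cdot\operatorname{covol}\big(\pi(\Z^n)\big)=1$, where $\pi$ is orthogonal projection of $\R^n$ onto $(L\otimes\R)^{\perp}$ and $\pi(\Z^n)\cong\Gamma/\Gamma_{\mathrm{tors}}$. Comparing the Euclidean structure carried by $\pi(\Z^n)$ with the N\'eron--Tate one — the classes $\overline{P_j}$ have Euclidean length $\le1$ and N\'eron--Tate length in $[\sqrt\kappa,\sqrt q]$ — and using (i) and (ii), one gets a lower bound for $\operatorname{covol}\big(\pi(\Z^n)\big)$ and hence $\operatorname{covol}(L)=[L^{\sharp}:L]\operatorname{covol}(L^{\sharp})\ll_n d^{\,\delta_2}(h(\lambda)+1)^{2n}q^{(n-1)/2}$. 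Finally, since the first minimum of $L$ is $\ge1$, Minkowski's second theorem produces a $\Z$-basis $\bm a_1,\dots,\bm a_r$ of $L$ with Euclidean norms $\ll_n\operatorname{covol}(L)$, hence with sup-norms satisfying the stated inequality after adjusting $\delta_1,\delta_2$.

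The main obstacle I anticipate is precisely the comparison in the previous paragraph: making the relation between the Euclidean structure on $\Z^n/L^{\sharp}$ and the pulled-back N\'eron--Tate structure quantitatively effective — one has to choose the representatives and bases cleverly so that the lengths of the $\overline{P_j}$ actually translate into the bound — while simultaneously keeping the torsion in play so that $L$, and not merely $L^{\sharp}$, acquires a genuine basis, and bookkeeping the exponents so that the power of $q$ stays at $(n-1)/2$ and the powers of $d$ and $h(\lambda)+1$ stay polynomial. This is the content of \cite[Lemma 6.1]{BarroCapuano16} together with the relevant estimates in \cite{Masser88}.
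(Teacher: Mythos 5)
The paper does not prove this lemma itself: it is quoted from Lemma 6.1 of \cite{BarroCapuano16}, whose proof combines Masser's geometry-of-numbers lemma from \cite{Masser88} with explicit Lehmer-type height lower bounds and torsion bounds for the Legendre curve. Your plan reconstructs exactly that strategy (control the covolume of the relation lattice via the N\'eron--Tate pairing, the torsion index and the height lower bound, then extract a short basis by Minkowski), so it is essentially the same approach as the cited proof, with the precise exponents deferred, as there, to the known estimates for $\legsch_\lambda$.
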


Before stating and proving the next result we notice that, by definition, $L^{sing} \subseteq L$. In general we do not expect them to be equal.

\begin{lemma}
\label{RelationsNoGap_lemma}
One has that
\[
	\Q L^{sing} \cap L = L^{sing};
\]
in particular, if $\rk(L^{sing}) = \rk(L)$, then $L^{sing} = L$.
\end{lemma}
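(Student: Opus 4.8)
The key structural fact I would exploit is that $L^{\mathrm{sing}}$ is, by its very definition, the kernel of a $\Z$-linear map on $L$. Concretely: fix $\bm{c} \in \hatcalC$, and for $\bm{a} \in L$ there is (by definition of $L$) a choice of $a_{n+1}, a_{n+2} \in \Z$ with $\sum a_i z_i(\lambda) = a_{n+1} f(\lambda) + a_{n+2} g(\lambda)$; since $f(\lambda), g(\lambda)$ are $\R$-linearly independent, this pair $(a_{n+1}, a_{n+2})$ is uniquely determined by $\bm{a}$, and the assignment $\bm{a} \mapsto (a_{n+1}, a_{n+2})$ is $\Z$-linear (this is just saying that the period-lattice coordinates, or Betti coordinates, of a $\Z$-linear combination of the $P_i$ are the corresponding $\Z$-linear combination of the Betti coordinates). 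Then $\bm{a} \in L^{\mathrm{sing}}$ iff in addition $\sum a_i \ddl{z_i}(\lambda) = a_{n+1}\ddl{f}(\lambda) + a_{n+2}\ddl{g}(\lambda)$, i.e. iff $\bm{a}$ lies in the kernel of the $\Z$-linear functional $\psi: L \to \C$ given by $\psi(\bm{a}) = \sum a_i \ddl{z_i}(\lambda) - a_{n+1}\ddl{f}(\lambda) - a_{n+2}\ddl{g}(\lambda)$, where $(a_{n+1},a_{n+2})$ is the pair attached to $\bm{a}$ above.

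Granting that, the statement is pure lattice theory: if $M \subseteq L$ is the kernel of a group homomorphism $\psi$ from $L$ to a torsion-free group (here $\C$, or its $\Z$-linear image in $\C$), then $L/M$ is torsion-free, hence $M = \Q M \cap L$ — an element $\bm{a} \in L$ with $k\bm{a} \in M$ for some nonzero integer $k$ satisfies $k\,\psi(\bm{a}) = \psi(k\bm{a}) = 0$, so $\psi(\bm{a}) = 0$, so $\bm{a} \in M$. Applying this with $M = L^{\mathrm{sing}}$ gives $\Q L^{\mathrm{sing}} \cap L = L^{\mathrm{sing}}$. The "in particular" clause is then immediate: if $\rk(L^{\mathrm{sing}}) = \rk(L)$ then $\Q L^{\mathrm{sing}} = \Q L$ (as subspaces of $\Q^n$ of equal dimension, one contained in the other), so $L^{\mathrm{sing}} = \Q L^{\mathrm{sing}} \cap L = \Q L \cap L = L$.

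The only genuinely substantive point — and where I would be careful — is the linearity and well-definedness of the auxiliary pair $(a_{n+1}, a_{n+2})$ as a function of $\bm{a} \in L$, equivalently the claim that $\psi$ is a well-defined homomorphism on all of $L$. Well-definedness of the individual pair is clear from $\R$-linear independence of $f(\lambda), g(\lambda)$; linearity in $\bm{a}$ then follows because if $\bm{a}, \bm{a}'$ have attached pairs $(a_{n+1}, a_{n+2})$, $(a'_{n+1}, a'_{n+2})$ then $\sum(a_i + a'_i) z_i = (a_{n+1}+a'_{n+1}) f + (a_{n+2}+a'_{n+2}) g$, and uniqueness forces this to be the pair attached to $\bm{a}+\bm{a}'$. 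So $\psi(\bm{a}) = \sum a_i \ddl{z_i} - a_{n+1}\ddl{f} - a_{n+2}\ddl{g}$ is $\Z$-linear in $\bm{a}$, $L^{\mathrm{sing}} = \ker \psi$, and the abstract lattice argument finishes the proof. I expect no real obstacle beyond writing this cleanly; the content is entirely in recognizing $L^{\mathrm{sing}}$ as a kernel.
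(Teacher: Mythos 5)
Your proposal is correct and is essentially the paper's own argument in slightly more abstract clothing: the paper likewise uses the $\R$-linear independence of $f(\lambda),g(\lambda)$ to get uniqueness of the pair $(a_{n+1},a_{n+2})$, deduces that for $\mu\bm{a}\in L^{sing}$ the auxiliary coefficients are $\mu a_{n+1},\mu a_{n+2}$, and then cancels $\mu$ in the derivative relation --- exactly your observation that $L^{sing}$ is the kernel of a $\Z$-linear map into the torsion-free group $\C$. No gap; the lattice-theoretic packaging and the paper's direct computation are the same proof.
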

\begin{proof}
Clearly we have $\Q L^{sing} \cap L\supseteq L^{sing} $, so we only have to prove the other inclusion.

Let $\bm{a} \in \Q L^{sing} \cap L$. Since $\bm{a} \in L$, there exist some integers $a_{n+1}, a_{n+2}$ such that $\sum_{i=1}^{n} a_i z_i(\lambda) = a_{n+1} f(\lambda) + a_{n+2} g(\lambda)$, which are unique because $f$ and $g$ are linearly independent over $\R$.\\
Since $\bm{a} \in \Q L^{sing}$, then for some integer $\mu$ we have $\mu \bm{a} \in L^{sing}$, hence for some $b_{n+1}, b_{n+2} \in \Z$, we have
\[
\left\lbrace
\begin{aligned} 
&\sum_{i=1}^{n} \mu a_i z_i(\lambda) - b_{n+1} f(\lambda) - b_{n+2} g(\lambda) = 0\\
&\sum_{i=1}^{n} \mu a_i \ddl{z_i} (\lambda) - b_{n+1} \ddl{f}(\lambda) - b_{n+2} \ddl{g}(\lambda) = 0
\end{aligned}
\right. 
\]
We know, moreover, that $\sum_{i=1}^{n} \mu a_i z_i(\lambda) - \mu a_{i+1} f(\lambda) - \mu a_{i+2} g(\lambda) = 0$; since the $b_i$'s are unique again by the linear independence of $f$ and $g$, we have that $\mu a_i = b_i$ for $i=n+1, n+2$.
Hence we have
\begin{align*}
0 &= \sum_{i=1}^{n} \mu a_i \ddl{z_i} (\lambda) - \mu a_{n+1} \ddl{f}(\lambda) - \mu a_{n+2} \ddl{g}(\lambda)\\
&= \mu \left(\sum_{i=1}^{n} a_i \ddl{z_i} (\lambda) - a_{n+1} \ddl{f}(\lambda) - a_{n+2} \ddl{g}(\lambda) \right)
\end{align*}
so
\[
\sum a_i \ddl{z_i} (\lambda) - a_{n+1} \ddl{f}(\lambda) - a_{n+2} \ddl{g}(\lambda) = 0,
\]
from which we deduce that actually $\bm{a} \in L^{sing}$, as desired.
\end{proof}

Combining the two lemmas we get the following.
\begin{coroll}
\label{SmallGeneratorsSing_lemma}
Let $\bm{c}, \lambda, q, d$ be as in Lemma \ref{SmallGenerators_lemma}; 
assume $\rk (L^{sing}(\bm{c}))=\rk (L(\bm{c}))$. 
Then, there are generators $\bm{a}_1, \dots, \bm{a}_r$ of $L^{sing}(\bm{c})$ with
\[
\abs{\bm{a}_i} \leq \delta_1 d ^{\delta_2} (h (\lambda) + 1)^{2n} q ^{\frac{1}{2}(n - 1)},
\]
for some positive constants $\delta_1, \delta_2$ depending only on $n$.
\end{coroll}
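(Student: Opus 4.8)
The corollary is an immediate consequence of combining Lemma~\ref{SmallGenerators_lemma} with Lemma~\ref{RelationsNoGap_lemma}, so the argument is short. Under the hypothesis $\rk(L^{sing}(\bm{c})) = \rk(L(\bm{c}))$, Lemma~\ref{RelationsNoGap_lemma} gives directly that $L^{sing}(\bm{c}) = L(\bm{c})$ as lattices. Therefore any set of generators of $L(\bm{c})$ is automatically a set of generators of $L^{sing}(\bm{c})$, and conversely they have the same rank $r$.

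First I would invoke Lemma~\ref{SmallGenerators_lemma}, whose hypotheses (namely that $P_1,\dots,P_n \in \legsch_\lambda$ are defined over a finite extension $K$ of $\Q(\lambda)$ of degree $d$, with N\'eron--Tate heights bounded by $q \geq 1$) are exactly the ones carried over in the statement of the present corollary. That lemma produces generators $\bm{a}_1, \dots, \bm{a}_r$ of $L = L(\bm{c})$ satisfying
\[
\abs{\bm{a}_i} \leq \delta_1 d^{\delta_2} (h(\lambda) + 1)^{2n} q^{\frac{1}{2}(n-1)}
\]
for suitable constants $\delta_1, \delta_2$ depending only on $n$. Then, using the identification $L^{sing}(\bm{c}) = L(\bm{c})$ from the previous paragraph, the same tuples $\bm{a}_1, \dots, \bm{a}_r$ generate $L^{sing}(\bm{c})$, and they obviously satisfy the same bound. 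This completes the proof.

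There is essentially no obstacle here: the corollary is purely a matter of chaining the two preceding results, and the only thing to check is that the rank hypothesis lets one apply Lemma~\ref{RelationsNoGap_lemma} in the stated form, which it does verbatim. The substantive work has already been done in Lemma~\ref{SmallGenerators_lemma} (the Masser-type height bound for generators of relation lattices) and in Lemma~\ref{RelationsNoGap_lemma} (the ``no gap'' property $\Q L^{sing} \cap L = L^{sing}$); here one merely records their combination in the form that will be convenient for the Pila--Zannier counting argument in Section~\ref{Section:proof_of_main}.
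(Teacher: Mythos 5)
Your proposal is correct and follows exactly the paper's intended argument: the paper states the corollary with the remark ``Combining the two lemmas we get the following,'' i.e.\ the rank hypothesis plus Lemma~\ref{RelationsNoGap_lemma} gives $L^{sing}(\bm{c}) = L(\bm{c})$, and then Lemma~\ref{SmallGenerators_lemma} supplies the small generators. Nothing is missing.
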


\section{Bounded height}
Let $\mathcal{C}'$ be the set of points $\bm{c}$ of
$\hatcalC$ considered by Theorem \ref{Main_Thm}, i.e. such that $P_1(\bm{c})$, $\dots$, $P_n(\bm{c})$ satisfy
a linear relation with integer coefficients on the specialized elliptic curve $\legsch_{\lambda(\bm{c})}$, and fix $\bm{c}_0 \in \mathcal{C}'$. 
Since $\mathcal{C}$ is defined over $\overline{\Q}$, the
$x_j (\bm{c}_0 )$ and $\lambda(\bm{c}_0 )$ must be algebraic, unless the $P_j$ are identically linearly dependent on $\mathcal{C}$,
which we excluded by hypothesis.

Then, by Silverman’s specialization Theorem
\cite{SilvermanSpec} there exists $\gamma_1 > 0$ such that
\begin{equation}
\label{heightBound}
h(\lambda(\bm{c}_0 )) \leq \gamma_1.
\end{equation}

This bound has some important consequences that we will see in the rest of this section.\\ 
First, we will show a ``Large Galois Orbit'' type of statement. 
Let $k$ be a number field on which $\mathcal{C}$ is defined. 
We can also assume that the finitely
many points we excluded from $\mathcal{C}$ to get 
$\hatcalC$, which are algebraic, are defined over $k$.\\
In the proof of Theorem \ref{Main_Thm} we want to get a bound on the degree of points in $\mathcal{C}^{\{1, sing\}}$, using Proposition \ref{MainEstProp} and the fact that every point in $\mathcal{C}^{\{1, sing\}}$ has many Galois conjugates also in $\mathcal{C}^{\{1, sing\}}$.
However, Proposition \ref{MainEstProp} gives a bound only in a small neighborhood of a point.
The next lemma shows that a good proportion of Galois conjugates are actually in the same neighborhood.\\
Let $\delta > 0$ be a small real number,
and let
\begin{equation*}
\mathcal{C}^\delta = \left\{ \bm{c} \in \mathcal{C} \st \lVert \bm{c} \rVert \leq \frac{1}{\delta}, \lVert \bm{c} - \bm{c}' \rVert \geq \delta \text{ for all } \bm{c} \in \mathcal{C}\setminus \hatcalC \right\},
\end{equation*}
where $\lVert \cdot \rVert$ is the standard norm of $\C^{2n + 1}$.
\begin{lemma}
\label{bound_on_embedding_in_compact_lemma}
There is a positive $\delta $ such that there are at least $\frac{1}{2} [k(\bm{c}_0 ) : k]$ different
$k$-embeddings $\sigma$ of $k(\bm{c}_0 )$ in $\C$ such that $\sigma (
\bm{c}_0 )$ lies in $\mathcal{C}^\delta$ for all $\bm{c}_0 \in \mathcal{C}'$.
\end{lemma}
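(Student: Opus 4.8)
The plan is to deduce this from the height bound \eqref{heightBound} combined with a standard counting argument for points of bounded height in a fixed number field. First I would observe that since $\bm{c}_0 \in \mathcal{C}'$, all coordinates $x_j(\bm{c}_0), y_j(\bm{c}_0)$ and $\lambda(\bm{c}_0)$ are algebraic, and by \eqref{heightBound} we have $h(\lambda(\bm{c}_0)) \le \gamma_1$. Using that $\mathcal{C}$ is an algebraic curve defined over the number field $k$, the remaining coordinates of $\bm{c}_0$ are bounded (in height) by a fixed function of $h(\lambda(\bm{c}_0))$, since on the curve $\mathcal{C}$ the functions $x_j, y_j$ are algebraic over $k(\lambda)$; thus there is a uniform bound $h(\bm{c}_0) \le \gamma_2$, with $\gamma_2$ independent of $\bm{c}_0 \in \mathcal{C}'$. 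Consequently every point $\bm{c}_0 \in \mathcal{C}'$, together with all of its $k$-conjugates, lies in the set of algebraic points of $\mathcal{C}$ of height at most $\gamma_2$.

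Next I would translate this height bound into an archimedean bound. For a point $\xi \in \overline{\Q}$ with $[\mathbb{Q}(\xi):\mathbb{Q}] = D$ and $h(\xi) \le \gamma_2$, the number of complex embeddings $\tau$ with $|\tau(\xi)| > R$ is small compared to $D$ when $R$ is large: indeed $\sum_{\tau} \log\max\{1, |\tau(\xi)|\} \le D \cdot h(\xi) \le D\gamma_2$ (the archimedean part of the height is at most the whole height), so the number of embeddings with $|\tau(\xi)| > R$ is at most $D\gamma_2 / \log R$. Applying this coordinatewise to the finitely many coordinate functions of $\bm{c}_0 \in \C^{2n+1}$, there is a radius $R_0$, depending only on $\gamma_2$ and $n$, such that the number of $k$-embeddings $\sigma$ of $k(\bm{c}_0)$ with $\lVert \sigma(\bm{c}_0)\rVert > 1/\delta$ is at most $\tfrac{1}{4}[k(\bm{c}_0):k]$ once $1/\delta \ge R_0$ (here I use that the number of embeddings of $k(\bm{c}_0)$ over $k$ is comparable to the number over $\mathbb{Q}$ up to the fixed factor $[k:\mathbb{Q}]$, and absorb that into the choice of $\delta$; more cleanly one bounds directly the $k$-embeddings).

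For the second condition defining $\mathcal{C}^\delta$, namely $\lVert \bm{c} - \bm{c}'\rVert \ge \delta$ for all $\bm{c}' \in \mathcal{C}\setminus\hatcalC$, I would use that $\mathcal{C}\setminus\hatcalC$ is a \emph{fixed} finite set of algebraic points, all defined over $k$. Hence the set of $\bm{c} \in \mathcal{C}$ within distance $\delta$ of this finite set is, for $\delta$ small, a union of finitely many small analytic neighborhoods; since $\mathcal{C}'$ is assumed disjoint from $\mathcal{C}\setminus\hatcalC$ (we have $\mathcal{C}' \subseteq \hatcalC$), and the set $\mathcal{C}\setminus\hatcalC$ is $\sigma$-stable, the $k$-conjugates $\sigma(\bm{c}_0)$ again lie in $\hatcalC$ and are at positive distance from $\mathcal{C}\setminus\hatcalC$ — but to get a \emph{uniform} $\delta$ I need to rule out that conjugates accumulate to the bad points. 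This is handled by the same height argument: if $\bm{c}' \in \mathcal{C}\setminus\hatcalC$ is one of the finitely many excluded points, the quantity $\lVert\sigma(\bm{c}_0) - \bm{c}'\rVert$ cannot be too small for more than a bounded-proportion of $\sigma$, because $\prod_\sigma$ of a suitable coordinate difference is, up to bounded factors, the norm of a nonzero algebraic number, hence at least a negative power of its denominator, which is controlled by $\gamma_2$; summing $-\log$ of these over $\sigma$ and using the product formula bounds the number of $\sigma$ for which the difference is smaller than $\delta$ by $C[k(\bm{c}_0):\mathbb{Q}]/\log(1/\delta)$, again at most $\tfrac14[k(\bm{c}_0):k]$ for $\delta$ small enough. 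Combining the two estimates, for a suitable fixed $\delta$ at least $[k(\bm{c}_0):k] - \tfrac14[k(\bm{c}_0):k] - \tfrac14[k(\bm{c}_0):k] = \tfrac12[k(\bm{c}_0):k]$ embeddings $\sigma$ satisfy both conditions, i.e.\ $\sigma(\bm{c}_0) \in \mathcal{C}^\delta$.

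The main obstacle is making the exclusion of the "bad locus" $\mathcal{C}\setminus\hatcalC$ genuinely uniform over all $\bm{c}_0 \in \mathcal{C}'$: one must argue that the conjugates of $\bm{c}_0$ do not cluster arbitrarily close to the finitely many excluded points, and this requires the product-formula/height input rather than a purely topological argument. Once that is phrased correctly (e.g.\ by choosing a coordinate function on $\mathcal{C}$ that separates $\bm{c}_0$ from each $\bm{c}' \in \mathcal{C}\setminus\hatcalC$ and applying the standard lower bound for the norm of a nonzero algebraic integer combined with the denominator bound coming from $h(\bm{c}_0) \le \gamma_2$), the rest is bookkeeping with the normalized absolute values and the definition of $h$.
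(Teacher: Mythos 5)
Your proposal is correct and is essentially the argument the paper relies on: the paper proves this lemma by citing Lemma 8.2 of \cite{MZ14}, whose proof is precisely this height/conjugate-counting argument — bounded height of $\lambda(\bm{c}_0)$ from \eqref{heightBound} propagated to all coordinates since $x_j,y_j$ are algebraic over $k(\lambda)$ on $\mathcal{C}$, then the standard estimate that at most $[F:\Q]\,h(\xi)/\log T$ embeddings give $|\sigma(\xi)|>T$, applied both to the coordinates and to reciprocals of coordinate differences with the finitely many excluded points (which are defined over $k$). Your handling of the uniformity of $\delta$ via these height bounds matches that proof, so no further comparison is needed.
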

\begin{proof}
See Lemma 8.2 of \cite{MZ14}.
\end{proof}

\begin{remark}
We would like to point out that it would be possible to avoid the restriction to a smaller domain and the use of the previous lemma by either constructing the Betti coordinates $u_j, v_j$ in a more global way, following \cite{Gao20a}, or by using the results from Peterzil and Starchenko \cite{PeterzilStarchenko2004} and Jones and Schmidt \cite{Jones2020}, who proved that the Weierstrass $\wp$ function, and hence its derivative, can be defined globally in the structure $\R_{\mathrm{an,exp}}$.
\end{remark}

Secondly, we show a bound for the Néron-Tate heights and the degree of the coordinates of the points in $\mathcal{C}^{\{1, sing\}}$, needed in order to apply Corollary \ref{SmallGeneratorsSing_lemma}.
\begin{lemma}[{\cite[Lemma 7.2]{BarroCapuano16}}]
\label{NeronTateHeightBound_lemma}
There exist positive constants $\gamma_2, \gamma_3$ such that, for every $\bm{c}_0 \in \mathcal{C}'$ and every $i = 1,\dots, n $ we have
\[
\hat{h}(P_i (\bm{c}_0)) \leq  \gamma_2,
\]
and the $P_i(\bm{c}_0)$'s are defined over some number field $K \supseteq k(\lambda(\bm{c}_0))$, with
\[
[K:\Q] \leq \gamma_3 [k(\lambda(\bm{c}_0)):k].
\]
\end{lemma}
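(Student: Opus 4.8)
The plan is to deduce the statement from the height bound $h(\lambda(\bm{c}_0)) \le \gamma_1$ of \eqref{heightBound}, using standard functoriality of heights on the fixed curve $\mathcal{C}$ together with Silverman's comparison between the naive and the canonical height on an elliptic curve.

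For the degree bound: since $\mathcal{C}$ is not contained in a fiber, $\lambda|_{\mathcal{C}}$ is non-constant, hence a finite morphism onto $Y(2)$; set $D_0 = [k(\mathcal{C}):k(\lambda)]$. For $\bm{c}_0 \in \mathcal{C}$ let $K = k(\bm{c}_0)$ be the field generated over $k$ by the coordinates of $\bm{c}_0$; then every $P_i(\bm{c}_0)$ lies in $\legsch_{\lambda(\bm{c}_0)}(K)$, $K \supseteq k(\lambda(\bm{c}_0))$, and $[K:k(\lambda(\bm{c}_0))] \le D_0$ (the residue degree at a point is at most the generic degree), whence
\[
[K:\Q] = [K:k(\lambda(\bm{c}_0))]\cdot[k(\lambda(\bm{c}_0)):k]\cdot[k:\Q] \le D_0\,[k:\Q]\,[k(\lambda(\bm{c}_0)):k].
\]
So $\gamma_3 = D_0[k:\Q]$ works; the finitely many points of $\mathcal{C}\setminus\hatcalC$, which we may assume defined over $k$, cause no trouble.

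For the height bound, I would first bound $h(x_i(\bm{c}_0))$. If $x_i$ is constant on $\mathcal{C}$ this is a fixed number; otherwise the Zariski closure of the image of $(\lambda, x_i)$ in $\P^1 \times \P^1$ is a curve cut out by a single bihomogeneous polynomial $F$ with coefficients in $k$, so away from a finite set of points $x_i(\bm{c}_0)$ is a root of the one-variable polynomial $F(\lambda(\bm{c}_0), \cdot)$, whose coefficients are polynomials in $\lambda(\bm{c}_0)$ of bounded degree, and the standard estimate for the height of a root gives $h(x_i(\bm{c}_0)) \le N_i\, h(\lambda(\bm{c}_0)) + O(1) \le N_i\gamma_1 + O(1)$ for all $\bm{c}_0 \in \mathcal{C}$. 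Next, on each fiber $\legsch_{\lambda(\bm{c}_0)}$, Silverman's explicit bound for the difference between the Néron--Tate height and the naive $x$-coordinate Weil height gives
\[
\bigl|\,\hat{h}(P_i(\bm{c}_0)) - \tfrac12\, h(x_i(\bm{c}_0))\,\bigr| \le c\,\bigl(1 + h\bigl(j(\legsch_{\lambda(\bm{c}_0)})\bigr)\bigr),
\]
and since the $j$-invariant of the Legendre curve is a fixed rational function of $\lambda$, $h\bigl(j(\legsch_{\lambda(\bm{c}_0)})\bigr) \ll h(\lambda(\bm{c}_0)) + 1 \le \gamma_1 + O(1)$. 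Combining the two estimates yields $\hat{h}(P_i(\bm{c}_0)) \le \gamma_2$ uniformly in $\bm{c}_0$ (for torsion points there is nothing to prove, as $\hat{h}$ vanishes).

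The point needing the most care is this last step: one cannot claim that the fibers $\legsch_{\lambda(\bm{c}_0)}$ form a \emph{finite} family, since $[k(\lambda(\bm{c}_0)):\Q]$ is unbounded, so the uniformity of $\gamma_2$ really depends on the fact that the error constant in Silverman's bound depends only on the height of the coefficients (equivalently of the $j$-invariant) of the Weierstrass model, which is controlled by $h(\lambda(\bm{c}_0)) \le \gamma_1$. Everything else is routine bookkeeping with the height machine on the fixed curve $\mathcal{C}$.
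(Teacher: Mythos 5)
Your proof is correct, but it takes a different route from the paper in the sense that the paper does not prove this lemma at all: it cites Lemma 7.2 of \cite{BarroCapuano16} verbatim, adding only the remark that the hypothesis there (two independent relations at $\bm{c}_0$) enters that proof solely through a height bound of the form \eqref{heightBound}, which in the present situation is supplied by Silverman's specialization theorem. What you have written is a self-contained reconstruction of essentially the argument behind the cited lemma: the degree bound via $[k(\bm{c}_0):k(\lambda(\bm{c}_0))]\le [k(\mathcal{C}):k(\lambda)]$ for points in fibers of $\lambda|_{\mathcal{C}}$, and the height bound by eliminating to a single relation $F(\lambda,x_i)=0$ defined over $k$, bounding $h(x_i(\bm{c}_0))$ linearly in $h(\lambda(\bm{c}_0))\le\gamma_1$, and then invoking Silverman's explicit comparison between $\hat h$ and $\tfrac12\,h\circ x$ with error controlled by the height of the $j$-invariant (equivalently of the coefficients of the Legendre model), which is a rational function of $\lambda$; the uniformity issue you flag at the end is indeed the crucial point, and you handle it correctly, since one cannot treat the fibers $\legsch_{\lambda(\bm{c}_0)}$ as a finite family. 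Two minor points of bookkeeping: the finitely many $\bm{c}_0$ for which $F(\lambda(\bm{c}_0),\cdot)$ vanishes identically should also be absorbed into $\gamma_2$ (you make the analogous remark only for the degree bound), and to quote the residue-degree inequality cleanly one should extend $\lambda|_{\mathcal{C}}$ to a finite map from the normalization of the projective closure of $\mathcal{C}$ onto $\mathbb{P}^1$, which changes nothing since residue fields only grow there. Your self-contained version buys independence from the external reference and makes explicit where the constant $\gamma_2$ comes from; the paper's citation buys brevity at the cost of asking the reader to check that the hypotheses of the quoted lemma are only used through \eqref{heightBound}.
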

\begin{remark}
In the assumptions of Lemma 7.2 of \cite{BarroCapuano16} the authors assume that $\bm{c}_0$ satisfies two linearly independent conditions on the specialized elliptic curve $\legsch_{\lambda(\bm{c}_0)}$. However, the proof uses this only to get a height bound like \eqref{heightBound}, so the same proof works in our situation as well.
\end{remark}

\section{Proof of Theorem \ref{Main_Thm}}
\label{Section:proof_of_main}

We want to show that there are at most finitely many $\bm{c}$ on the curve such that
$P_1 (\bm{c}), \dots , P_n (\bm{c})$ satisfy a linear relation with integer coefficients on $\legsch_{\lambda(\bm{c})}$ with multiplicity $\ge 2$. 
Let $\mathcal{C}^{\{1, sing\}}$ be the set of such points.
Since $\mathcal{C} \setminus \hatcalC$ is finite, as observed in Section \ref{Section30}, we only need to prove the finiteness of
\[
\mathcal{C}'':=\mathcal{C}^{\{1, sing\}} \cap \hatcalC .
\]
In the previous section we showed that for $\bm{c} \in \mathcal{C}''\subseteq\mathcal{C}'$ the height of $\lambda(\bm{c})$ is bounded by a constant $\gamma_1$.
By Northcott’s theorem \cite{Northcott49} we then only need to bound the degree $d$ of $\lambda(\bm{c})$ over $k$, a number field over which $\mathcal{C}$ and the points of $\mathcal{C} \setminus \hatcalC$ are defined.

Fix one $\bm{c}_0 \in \mathcal{C}''$ and $d _0 := [k (\bm{c}_0 ) : k ]$ which we suppose large.
It is enough to prove that $d_0$ is bounded, as $\lambda(\bm{c}_0) \in k (\bm{c}_0)$.
By Theorem 1.1 of \cite{BarroCapuano16}, up to removing a finite number of points from $\mathcal{C}''$ we can assume that the $P_i(\bm{c}_0)$'s do not satisfy two linearly independent linear relations, that is, $\rk(L(\bm{c}_0)) = 1$. 
Since they satisfy a linear relation with tangency condition, we also have $\rk(L^{sing}(\bm{c}_0)) = 1$.\\

By Lemma \ref{bound_on_embedding_in_compact_lemma}, we can
choose $\delta$, independently of $\bm{c}_0$, such that $\bm{c}_0$ has at least $\frac{1}{2} d _0$ conjugates in $\mathcal{C}^\delta$.
Now, since $\mathcal{C}^\delta$ is compact, there are 
$\bm{c}_1, \dots, \bm{c}_{\gamma_4} \in \widehat{\mathcal{C}}$ with corresponding neighborhoods 
$N_{\bm{c}_1}, \dots, N_{\bm{c}_{\gamma_4}}$ and 
$D_{\bm{c}_1}, \dots, D_{\bm{c}_{\gamma_4}} \subseteq \pi(\widehat{\mathcal{C}}) $, 
where $D_{\bm{c}_i} \subseteq \lambda (N_{\bm{c}_i} )$ contains $\lambda (\bm{c}_i )$ and is analytically isomorphic to a closed disc, and we have that
the $\lambda^{-1} (D_{\bm{c}_i}) \cap N_{\bm{c}_i}$ for $i = 1, \dots, \gamma_4$ cover $\mathcal{C}^\delta$.

We can assume that $D_{\bm{c}_1}$ contains $t_0^\sigma = \lambda (\bm{c}_0^\sigma )$ for at least $\frac{1}{2\gamma_4} d_0$ conjugates $\bm{c}_0^\sigma $. 
Since each $t \in \lambda(\mathcal{C})$ has a uniformly bounded number of preimages $\bm{c} \in \mathcal{C} $, 
we can suppose that there are at
least $\frac{1}{\gamma_5} d_0$ distinct such $t_0^\sigma$ in $D_{\bm{c}_1}$.\\

Now, the corresponding points $P_1 (\bm{c}_0^\sigma ), \dots, P_n (\bm{c}_0^\sigma )$ satisfy the same relations. 
Moreover, by Corollary \ref{inters_invariant_iso_cor}, the tangency of these relations is also preserved,
so we have that $\rk(L^{sing}(\bm{c}_0^\sigma))=\rk(L(\bm{c}_0^\sigma))=1$.

By Lemma \ref{NeronTateHeightBound_lemma}, $\hat{h}(P_i(\bm{c}_0^\sigma)) \leq \gamma_2$, and the $P_i(\bm{c}_0^\sigma)$ are defined over a field $K$ of degree $\leq \gamma_3 d_0$ over $\Q$.
Hence, by applying Corollary \ref{SmallGeneratorsSing_lemma} and recalling \eqref{heightBound}, there exist $\bm{a} \in L^{sing}(\bm{c}_0^\sigma)$ such that 
\begin{equation}
\label{GeneratorsBound_eq}
\abs{\bm{a}} \leq \gamma_7 d_0^{\gamma_8}.
\end{equation}

Recall that in Section \ref{Section30} we defined on $D_{\bm{c}_1}$ the functions $f, g$ to be generators of the period lattice of $\legsch_\lambda$, and the $z_i$ as the elliptic logarithms of the $P_i$.
Since $\bm{a} \in L^{sing}(\bm{c}_0^\sigma)$, we have
\begin{equation*}
\left\lbrace \begin{aligned}
a_1 z_1(t_0^\sigma) + \dots + a_n z_n(t_0^\sigma) &= a_{n+1} f(t_0^\sigma) + a_{n+2} g(t_0^\sigma), \\
a_1 \ddl{z_1}(t_0^\sigma) + \dots + a_n \ddl{z_n}(t_0^\sigma) &= a_{n+1} \ddl{f}(t_0^\sigma) + a_{n+2} \ddl{g}(t_0^\sigma). \\
\end{aligned} \right.
\end{equation*}
Notice also that the same relationships are satisfied by all the other Galois conjugates of $t_0^\sigma$ in $D_{\bm{c}_1}$, by Corollary \ref{inters_invariant_iso_cor}.\\
In Section \ref{SectionMainEstimate} we defined $D_{\bm{c}_1}(\bm{a})$ as the set of $t \in D_{\bm{c}_1}$ such that relations of the form \eqref{complex_tangency_condition_eqn} (same as the above) hold. 
In particular, we have that $t_0^\sigma \in D_{\bm{c}_1}(\bm{a})$ for all the $t_0^\sigma$ as above.
Therefore, $\abs{D_{\bm{c}_1}(\bm{a})} \geq \frac{1}{\gamma_5} d_0$.\\
On the other hand, by Proposition \ref{MainEstProp} we have $\abs{D_{\bm{c}_1} (\bm{a})} \leq \gamma_9 \abs{\bm{a}}^\varepsilon$, so applying \eqref{GeneratorsBound_eq} we have that 
$\abs{D_{\bm{c}_1} (\bm{a})} \leq \gamma_{10} d_0^{\gamma_8 \varepsilon}$.
Therefore if we choose $\varepsilon = 1 / (2 \gamma_8)$ we have a contradiction if $d_0$ is large enough.

We just deduced that $d_0$ is bounded and, by \eqref{heightBound} and Northcott's Theorem, we have the claim of Theorem \ref{Main_Thm}.

\qed

\section{An application to elliptic divisibility sequences}

In this section we discuss an application of Theorem \ref{Main_Thm} to geometric elliptic divisibility sequences.\\

Let $\mathcal{C}$ be an irreducible curve. A \emph{divisibility sequence} on $\mathcal{C}$ is a sequence of divisors $(D_n)_{n \geq1}$ of $\mathcal{C}$ such that $\forall m, n \in \N$, if $m \mid n$ then $D_m \mid D_n$, that is, $D_n - D_m$ is an effective divisor.
This notion is the function field analogue of an integral divisibility sequence.\\
Let $\pi:\mathcal{G} \to \mathcal{C}$ be a group scheme with zero-section $O$.
As noted by Silverman in \cite{Silverman2005}, every section $P:\mathcal{C} \to \mathcal{G}$ that is not identically torsion, i.e. $nP \neq O$ for all $n\geq 1$, has an associated divisibility sequence given by
\begin{equation}
\label{eqn:geom_seq}
D_{nP} := O^*(nP),
\end{equation}
i.e., $D_{nP}$ is the pull-back along the zero section of the divisor $nP$ on $\mathcal{G}$; such divisibility sequences are called \emph{geometric}.\\
A conjecture of Silverman, stated in \cite{Silverman2005}, predicts that, if $\mathcal{G}$ has relative dimension at least $2$, and the images of the multiples of $P$ are dense in the generic fiber, then
$D_{nP} = D_P$ for infinitely many $n>0$.
This question has been studied by Silverman \cite{Silverman2004} and Ghioca, Hsia and Tucker \cite{Ghioca2018} when $\mathcal{G}$ is the product of two elliptic schemes, and by Barroero, Capuano and Turchet \cite{BCT2024} in the case of split semiabelian schemes.
The results proved in \cite{Ghioca2018} and \cite{BCT2024} are stronger than the one mentioned above: under the conditions predicted by Silverman's conjecture, the set of $n \in \N$ for which $D_{nP} = D_P$ is the complement of a finite union of arithmetic progressions.

Generalizing the setting above, if we have two sections $P$ and $Q$ of $\mathcal{G} \to \mathcal{C}$, we can construct a sequence of divisors by replacing the zero section in \eqref{eqn:geom_seq} with $Q$, provided that $nP \neq Q$ for all $n\geq 1$.
We then get
\begin{equation}
\label{D_nP_Q_eqation}
D_{nP, Q} := Q^*(nP) = \pi_*(nP \cap Q),
\end{equation}
where, by slight abuse of notation, we are identifying $nP$ and $Q$ with their image, and the intersection is intended to be the scheme-theoretic intersection as zero-cycles.\\
These sequences have also been studied by Ghioca, Hsia and Tucker \cite{Ghioca2018} and Barroero, Capuano and Turchet \cite{BCT2024},
for $\mathcal{G}$ the product of two elliptic schemes and a split semiabelian scheme respectively.
They proved that, under the same conditions on $P$ as the above mentioned conjecture of Silverman's, $D_{nP, Q}$ is bounded by a divisor independent of $n$.

If the relative dimension of $\mathcal{G}$ is 1, then there is no expectation that $D_{nP} = D_P$ infinitely often.
However, in this case it is interesting to study when $D_{nP}$ is reduced, that is, it is of the form $D_{nP} = \sum_i t_i$ for $t_i \in \mathcal{C}$ distinct points (equivalently, each non-zero coefficient of $D_{nP}$ is equal to 1).
This condition is an analogue for an integer to be square-free.

In \cite{UU21}, the authors prove that, for $\mathcal{G}=\mathcal{E}$ an elliptic scheme and $P$ non identically torsion, $D_{nP}$ is reduced for all $n$ outside a finite union of arithmetic progressions.
Using Theorem \ref{Main_Thm} we prove that, under the further assumption that $\mathcal{E}$ is not isotrivial, a similar statement holds for $D_{nP, Q}$.
We start by proving a consequence of Theorem \ref{Main_Thm}.

\begin{thm}
\label{Application_thm_1}
Let $\pi: \mathcal{E} \to S$ be a non-isotrivial elliptic scheme over a smooth curve $S$, and let $P, Q$ be sections $S \to \mathcal{E}$.
If there exist infinitely many $s \in S(\overline{\Q})$ for which there exists $m_s \in \Z$ such that $m_s P(s) = Q(s)$ tangentially, then, for some $m \in \Z$, we have that $Q=m P$ identically on $S$.
\end{thm}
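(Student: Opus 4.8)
The plan is to deduce this from Theorem~\ref{Geometric_Main_Thm} (hence, ultimately, from Theorem~\ref{Main_Thm}) applied to the section curve of $(P,Q)$ inside $\mathcal{E}^2$, after peeling off a degenerate case that is settled by the classical relative Manin--Mumford theorem. Shrinking $S$ is harmless, since the desired conclusion $Q=mP$ is an equality of sections over an irreducible curve and so may be checked on a non-empty open set.

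\textbf{Setup.} Let $\Gamma:=(P,Q)(S)\subseteq\mathcal{E}^2$ be the image of the section $(P,Q)\colon S\to\mathcal{E}^2$. Being a section of $\mathcal{E}^2\to S$, the map $(P,Q)$ is a closed immersion, so $\Gamma$ is an irreducible curve over $\overline{\Q}$, dominant over $S$, not contained in any fibre, and $s\mapsto(P(s),Q(s))$ is injective. For $m\in\Z$ the homomorphism $\varphi_{(m,-1)}\colon\mathcal{E}^2\to\mathcal{E}$, $(x,y)\mapsto mx-y$, is surjective with smooth kernel $\ker\varphi_{(m,-1)}=\{(x,mx)\}\cong\mathcal{E}$, a flat subgroup scheme of codimension~$1$. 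A direct local computation --- using that $\varphi_{(m,-1)}$ is smooth, that $\ker\varphi_{(m,-1)}=\varphi_{(m,-1)}^{-1}(O(S))$, and the $S$-automorphism of $\mathcal{E}$ given by fibrewise translation by $-Q$ (Proposition~\ref{intersection_invariant_etale_morph_prop} and Corollary~\ref{inters_invariant_iso_cor}) --- shows that, whenever $mP(s_0)=Q(s_0)$, the intersection multiplicity of $\Gamma$ and $\ker\varphi_{(m,-1)}$ at $(P(s_0),Q(s_0))$ equals the intersection multiplicity of the sections $mP$ and $Q$ over $s_0$. Thus ``$m_sP(s)=Q(s)$ tangentially'' is equivalent to ``$(P(s),Q(s))\in\Gamma\cap_{sing}\ker\varphi_{(m_s,-1)}$'', and the hypothesis therefore produces infinitely many pairwise distinct points of $\Gamma^{\{1,sing\}}$.

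\textbf{Dichotomy.} Since $\mathcal{E}$ is non-isotrivial, $\mathcal{E}_\eta$ has non-constant $j$-invariant, hence admits no complex multiplication and $\operatorname{End}_{k(S)}(\mathcal{E}_\eta)=\Z$; consequently $\Gamma$ fails to lie in a proper subgroup scheme of $\mathcal{E}^2$ if and only if $P_\eta$ and $Q_\eta$ are $\Z$-linearly independent modulo torsion. \emph{Case~1:} if this independence holds, then $\mathcal{A}=\mathcal{E}^2$ and $\mathcal{C}=\Gamma$ satisfy the hypotheses of Theorem~\ref{Geometric_Main_Thm}, so $\Gamma^{\{1,sing\}}$ is finite --- contradicting the previous paragraph. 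Hence this case cannot occur. \emph{Case~2:} there is a primitive $\bm{b}=(b_1,b_2)\in\Z^2\setminus\{0\}$ with $T:=b_1P+b_2Q$ identically torsion on $S$, of order $N_T$ (if $a_1P+a_2Q$ is torsion, divide $(a_1,a_2)$ by its gcd). Here I would use only the relations $m_sP(s)=Q(s)$, together with: (i) relative Manin--Mumford for the non-isotrivial $\mathcal{E}$ --- a section of $\mathcal{E}\to S$ taking torsion values at infinitely many points of $S(\overline{\Q})$ is identically torsion (\cite{MZ08,MZ10}, extended to arbitrary non-isotrivial $\mathcal{E}$, or directly via Silverman's specialization theorem \cite{SilvermanSpec}); and (ii) a pigeonhole step --- if $P$ is identically torsion of exact order $N$, the $N$ sections $jP$ ($0\le j<N$) are distinct, each special $s$ satisfies $Q(s)=m_sP(s)=j_sP(s)$ for some $j_s\in\{0,\dots,N-1\}$, and infinitely many of these $s$ share a common $j_s$, whence $Q=jP$ identically. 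From $N_Tb_1P+N_Tb_2Q=O$ and $Q(s)=m_sP(s)$ one gets $N_T(b_1+b_2m_s)P(s)=O$, so for each special $s$ either $P(s)$ is torsion or $b_1+b_2m_s=0$; the latter forces $b_2\mid b_1$, hence $|b_2|\le 1$ by primitivity. If $b_2=0$ then $P=\pm T$ is identically torsion and (ii) concludes. If $|b_2|\ge2$ then $b_1+b_2m_s\ne0$ always, so $P(s)$ is torsion for infinitely many $s$, hence $P$ is identically torsion by (i), and (ii) concludes. If $|b_2|=1$ then $Q=mP+S_1$ with $m:=-b_1b_2$ and $S_1:=b_2T$ identically torsion; if $S_1=O$ we are done, and otherwise $S_1(s)$ has exact order $\operatorname{ord}(S_1)>1$ for all but finitely many special $s$, forcing $(m_s-m)P(s)=S_1(s)\ne O$ and hence $P(s)$ torsion, so $P$ (and then $Q=mP+S_1$) is identically torsion by (i), and (ii) concludes. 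In every sub-case $Q=mP$ for some $m\in\Z$.

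\textbf{Main obstacle.} The substantive ingredient, Theorem~\ref{Geometric_Main_Thm} (hence Theorem~\ref{Main_Thm}), is already available, so the genuinely delicate point is the translation carried out in the Setup: one must verify carefully that the ``tangency of the sections $m_sP$ and $Q$'' relevant to non-reducedness of $D_{nP,Q}$ really is a \emph{singular} intersection of $\Gamma$ with a codimension-$1$ flat subgroup scheme of $\mathcal{E}^2$, so that the main theorem applies. Once this is in place, the remaining work --- the case analysis in Case~2 --- is routine bookkeeping whose only non-elementary input is relative Manin--Mumford.
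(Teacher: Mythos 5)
Your overall architecture (pass to the curve $\Gamma=(P,Q)(S)\subseteq\mathcal{E}^2$, translate tangency of the sections $m_sP$ and $Q$ into a singular intersection of $\Gamma$ with the codimension-one flat subgroup $\ker\varphi_{(m_s,-1)}$, and invoke Theorem~\ref{Geometric_Main_Thm}/\ref{Main_Thm} to force an identical relation between $P$ and $Q$) is essentially the paper's route, and your Case~1, your $b_2=0$ sub-case and the pigeonhole step (ii) are fine. However, there is a genuine gap in Case~2: your ingredient (i) --- ``a section of a non-isotrivial elliptic scheme taking torsion values at infinitely many points of $S(\overline{\Q})$ is identically torsion'' --- is false. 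For a single non-torsion section the torsion locus $\{s\in S(\overline{\Q}): P(s)\ \text{torsion}\}$ is infinite (it is a \emph{likely} intersection: the section is a curve in the surface $\mathcal{E}$ and meets the torsion multisections $\mathcal{E}[N]$ for infinitely many $N$); this is exactly why Masser and Zannier need \emph{two} points in \cite{MZ08,MZ10}, and why the present paper needs the tangency condition at all. Silverman's specialization theorem \cite{SilvermanSpec} only bounds the height of such $s$, not their number, so it does not rescue (i). Consequently your sub-cases $\lvert b_2\rvert\ge 2$ and $\lvert b_2\rvert=1$ with $S_1\neq O$, which rest entirely on (i), do not go through as written.

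The missing idea, which is the one the paper uses, is that the tangency survives the manipulation: from $m_sP(s)=Q(s)$ holding \emph{tangentially} and the identical relation ($aP=bQ$, or $N_T(b_1P+b_2Q)=O$) holding \emph{identically}, one deduces that the torsion relation $(a-bm_s)P(s)=O$ (equivalently $N_T(b_1+b_2m_s)P(s)=O$) also holds tangentially, i.e.\ $P$ meets a torsion section with multiplicity at least $2$ at each of these infinitely many points. One then concludes that $P$ is identically torsion not from relative Manin--Mumford for one section (false), but from the finiteness of \emph{singular} torsion points of a non-torsion section, namely \cite[Proposition~2.4]{CDMZ21} (see also \cite{UU21}). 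With that substitution your bookkeeping closes: once $P$ is identically torsion your pigeonhole argument (ii) gives $Q=jP$ identically (equivalently, the minimal multipliers $m_s$ can be reduced modulo the order of $P$, which is how the paper's own dichotomy --- bounded versus unbounded minimal multipliers, rather than your dichotomy on $\Gamma$ lying in a proper subgroup scheme --- terminates).
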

\begin{proof}
Consider $(s_n)_n$ an infinite sequence of pairwise distinct $s_n \in S(\overline{\Q})$ for which there exists $m_{s_n} \in \Z$ such that
\begin{equation}
\label{tangent_intersection_section_eqn}
m_{s_n} P(s_n) = Q(s_n) \text{  tangentially}.
\end{equation}
We may also assume that $m_{s_n}$ has the smallest possible absolute value among the integers for which such a relation holds.

If the $m_{s_n}$ are bounded, then, up to taking a subsequence, we have that for a fixed $m$ there exist infinitely many $s \in S(\overline{\Q})$ where $m P(s) = Q(s)$, from which the claim follows.
We will therefore assume from now on that the $m_{s_n}$ are unbounded.

From Theorem \ref{Main_Thm} it follows that, for some $a, b \in \Z$ not both zero,
\begin{equation}
\label{relation_identical_section_eqn}
a P = b Q
\end{equation}
identically on $S$.
Combining this with \eqref{tangent_intersection_section_eqn}, we find that $P(s_n)$ is torsion for infinitely many $n$, since we have
\begin{equation}
\label{p_section_is_torsion_eqn}
O(s_n)=a P(s_n) - b Q(s_n) = a P(s_n) - b m_n P(s_n) = (a-b m_n)P(s_n)
\end{equation}
and $(a- b m_n)$ is non-zero infinitely often, as $m_n$ is unbounded.
Moreover, since \eqref{tangent_intersection_section_eqn} is true tangentially and \eqref{relation_identical_section_eqn} is true identically, we get that \eqref{p_section_is_torsion_eqn} also holds tangentially.
By \cite[Proposition 2.4]{CDMZ21} (see also \cite{UU21}), $P$ must be identically torsion, and so the same is true for $Q$, by \eqref{relation_identical_section_eqn}.

\end{proof}

\begin{thm}
\label{thm:elliptic_div}
Let $\pi:\mathcal{E} \to \mathcal{C}$ be a non-isotrivial elliptic scheme, and let $P, Q:\mathcal{C} \to \mathcal{E}$ be two sections such that no multiple of $P$ is identically equal to $Q$.
Let $D_{nP, Q}$ be as in \eqref{D_nP_Q_eqation}.
Then the set 
\[
\Xi = \{ n \in \N \st D_{nP, Q} \text{ is not reduced} \}
\]
is the union of a finite set and a finite union of arithmetic progressions.
\end{thm}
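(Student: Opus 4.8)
The plan is to translate the non‑reducedness of $D_{nP,Q}$ into a tangency statement and feed it into Theorem~\ref{Geometric_Main_Thm} (equivalently Theorem~\ref{Main_Thm}) applied to the curve traced out by the pair $(P,Q)$ inside $\mathcal{E}^2$. Let $\Gamma\subseteq\mathcal{E}^2=\mathcal{E}\times_{\mathcal{C}}\mathcal{E}$ be the image of the section $c\mapsto(P(c),Q(c))$; it is an irreducible curve, dominant over $\mathcal{C}$ and isomorphic to $\mathcal{C}$. Since $D_{nP,Q}=\pi_*(nP\cap Q)$ records at each $c$ the intersection multiplicity $i\bigl(Q(c);nP,Q;\mathcal{E}\bigr)$, and $nP\neq Q$ identically by hypothesis, $D_{nP,Q}$ fails to be reduced exactly when $nP$ and $Q$ are tangent at $Q(c)$ for some $c\in\mathcal{C}$. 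Composing with the fibrewise translation $x\mapsto x-Q(\pi x)$ of $\mathcal{E}$ (an automorphism of the surface $\mathcal{E}$, hence preserving intersection multiplicities) this is equivalent to $nP-Q$ being tangent to the zero section $O$ at $c$, and hence to $\Gamma$ being tangent at $(P(c),Q(c))$ to the codimension‑one flat subgroup scheme $\ker\varphi_{(n,-1)}=\{(x,y):nx=y\}$. In terms of the elliptic logarithms $z_P,z_Q$ on a small piece of $\widehat{\mathcal{C}}$ this tangency at $c$ means $nP(c)=Q(c)$ together with $n\,z_P'(c)=z_Q'(c)$, so that
\[
\Xi=\bigcup_{c}\bigl\{\,n\in\N\ :\ nP(c)=Q(c),\ n\,z_P'(c)=z_Q'(c)\,\bigr\},
\]
the union being over those $c$ for which $(P(c),Q(c))\in\Gamma\cap_{sing}\ker\varphi_{(n,-1)}$ for some $n$.

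I would then split into two cases according to whether $\Gamma$ lies in a proper subgroup scheme of $\mathcal{E}^2$. Since $\mathcal{E}$ is non‑isotrivial its generic endomorphism ring is $\Z$ and stays so after any finite base change, so by Lemma~\ref{SubgroupsAreKer_lemma} this happens exactly when $a_0P+b_0Q=O$ identically for some $(a_0,b_0)\in\Z^2\setminus\{0\}$, and this condition is insensitive to finite base change. If no such relation exists, $\Gamma$ satisfies the hypotheses of Theorem~\ref{Geometric_Main_Thm} with $\mathcal{A}=\mathcal{E}^2$, so $\Gamma^{\{1,sing\}}$ is finite and the union above runs over finitely many $c$. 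For each such $c$ the admissible $n$ can be read off directly: if $P(c)$ is non‑torsion there is at most one $n$ with $nP(c)=Q(c)$; if $P(c)$ has order $N_c$ and $Q(c)=n_0P(c)$, the $n$ with $nP(c)=Q(c)$ form the residue class $n\equiv n_0\pmod{N_c}$, among which the tangent ones are either the whole class (when $z_P'(c)=z_Q'(c)=0$), or at most one (when $z_P'(c)\neq0$), or none. Taking the union over the finitely many relevant $c$ exhibits $\Xi$ as a finite set together with a finite union of arithmetic progressions.

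In the complementary case there is an identity $a_0P+b_0Q=O$ with $(a_0,b_0)\neq0$. If $P$ is identically torsion (in particular if $b_0=0$), then $nP$, and hence $D_{nP,Q}$, depends only on $n$ modulo the order of $P$, so $\Xi$ is a union of residue classes. Otherwise $b_0\neq0$ and $P$ is not identically torsion; differentiating the identity gives $z_Q'=-\tfrac{a_0}{b_0}z_P'$ on $\widehat{\mathcal{C}}$, so the tangency condition $n\,z_P'(c)=z_Q'(c)$ becomes $(b_0n+a_0)\,z_P'(c)=0$. By Lemma~\ref{funcTrascLemma_fixed_point} applied to the single section $P$ (after reducing to the Legendre scheme if needed) one has $z_P'\not\equiv0$, so its zero set $\mathcal{Z}$ is finite; hence for every $n\neq-a_0/b_0$ any witnessing $c$ lies in $\mathcal{Z}$. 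Arguing exactly as above, each $c\in\mathcal{Z}$ contributes at most one $n$ or one full arithmetic progression, and the single value $n=-a_0/b_0$ (if a positive integer) is added by hand. This again yields the stated shape for $\Xi$.

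The main obstacle is the bookkeeping in the last step: passing from the fact that $\Xi$ is a union, indexed by finitely many base points, of sets each of which is a single integer or an arithmetic progression, to the clean conclusion requires carefully tracking, at each relevant $c$, the order of $P(c)$, whether $Q(c)\in\langle P(c)\rangle$, and the vanishing of $z_P'(c)$ and $z_Q'(c)$ — a coarse "$\Xi$ is contained in a finite union of arithmetic progressions" is not enough on its own. A secondary point requiring care is the equivalence between the no‑relation hypothesis of Theorem~\ref{Geometric_Main_Thm} and the non‑existence of an identical $\Z$‑linear relation between $P$ and $Q$, which is where non‑isotriviality enters (identifying flat subgroups of $\mathcal{E}^2$ with the kernels $\ker\varphi_{\bm a}$, $\bm a\in\Z^2$, and ruling out new relations after base change). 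Once Theorem~\ref{Main_Thm} is granted, no further serious input is needed.
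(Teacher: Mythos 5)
Your overall architecture is close to the paper's: the paper also reduces to finitely many base points by applying Theorem \ref{Main_Thm} to the pair $(P,Q)$ viewed in $\mathcal{E}^2$ (this is packaged as Theorem \ref{Application_thm_1}, whose degenerate case $aP=bQ$ is disposed of via \cite[Prop.\ 2.4]{CDMZ21}), and then analyzes each fiber separately; for the fiberwise structure the paper simply quotes \cite[Thm 1.4]{UU21} rather than computing by hand. The genuine problem with your version is the hand computation itself: you write the tangency of $nP$ and $Q$ at $c$ as ``$nP(c)=Q(c)$ together with $n\,z_P'(c)=z_Q'(c)$''. This drops the derivatives of the periods. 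The correct condition, as in \eqref{complex_tangency_condition_eqn}, is that for the \emph{specific} integers $a,b$ with $n z_P(\lambda(c))-z_Q(\lambda(c))=a f+b g$ one also has $n\ddl{z_P}-\ddl{z_Q}=a\ddl{f}+b\ddl{g}$ at $\lambda(c)$; the terms $a\ddl{f}+b\ddl{g}$ vanish only in the isotrivial case, which is exactly what is excluded. Consequently your trichotomy at a torsion point is wrong as stated: writing $n=n_0+kN_c$, the tangency condition is $A+kB=0$, where $A$ is the tangency defect of $n_0P$ against $Q$ at $c$ and $B$ is the tangency defect of $N_cP$ against the zero section at $c$ (both including the period-derivative terms), so whether the whole residue class occurs is governed by ``$N_cP$ tangent to $O$ at $c$'', not by the vanishing of $z_P'(c)$ and $z_Q'(c)$. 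The structural conclusion (each $\Xi_c$ is empty, a singleton, or a full progression) survives, but only after redoing the computation with the period terms; as written the criterion is false.

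The same omission breaks your degenerate case. From $a_0P+b_0Q=O$ identically you get $a_0z_P+b_0z_Q=\alpha f+\beta g$ locally, hence $a_0\ddl{z_P}+b_0\ddl{z_Q}=\alpha\ddl{f}+\beta\ddl{g}$, not $a_0z_P'+b_0z_Q'=0$; the correct reformulation of ``$nP$ tangent to $Q$ at $c$'' is ``$(b_0n+a_0)P$ tangent to the zero section at $c$''. The set you call $\mathcal{Z}=\{z_P'=0\}$ is therefore not the relevant locus, and it is not even well defined on $\widehat{\mathcal{C}}$, since $z_P$ is only defined up to adding $uf+vg$ and hence $z_P'$ up to adding $u\ddl{f}+v\ddl{g}$, which changes its zero set from branch to branch. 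What you actually need is finiteness of the set of $c$ at which some non-zero multiple of $P$ meets the zero section tangentially when $P$ is not identically torsion; this is exactly the $n=1$ case of Theorem \ref{Main_Thm} (or the results of \cite{CDMZ21,UU21}), and it is how the paper's route, via Theorem \ref{Application_thm_1} and \cite[Thm 1.4]{UU21}, avoids these pitfalls. So the proposal needs repair at both of these points, although the repairs are available within the paper's toolkit and would yield the same conclusion.
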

\begin{proof}
For $s \in \mathcal{C}$ let $\Xi_s = \{ n \in \N \st 2s \leq D_{nP, Q} \}$, that is, the set of $n\in \N$ such that $nP$ and $Q$ intersect tangentially in $s$.
Clearly, $\Xi = \bigcup_{s \in \mathcal{C}} \Xi_s$.
Moreover, by Theorem \ref{Application_thm_1}, there are only finitely many $s \in \mathcal{C}$ such that $\Xi_s$ is nonempty. It is then enough to show that for each $s \in \mathcal{C}$ we have that $\Xi_s$ is either empty, a singleton, or a finite union of arithmetic progressions.
Suppose that $\Xi_s$ is nonempty, and let $n_0$ be its smallest element. For any other $n \in \Xi_s$ we have that both $n_0 P$ and $n P$ intersect $Q$ tangentially over $s$. Hence, $(n - n_0) P$ intersects the zero section tangentially over $s$. 
By \cite[Thm 1.4]{UU21}, we can deduce that $\Xi_s - n_0$ is a finite union of arithmetic progressions starting from 0, so $\Xi_s$ itself is a finite union of arithmetic progressions starting from $n_0$.
\end{proof}
\begin{remark}
In a similar way as Theorem \ref{abelian_main_thm}, we don't expect the condition of $\mathcal{E}$ being non-isotrivial to be necessary. 
Indeed, having an isotrivial analogue of Theorem \ref{Main_Thm} would suffice, following the same proof as above.
Such a theorem is part of a work in progress with Ballini and Capuano.
\end{remark}
\begin{remark}
The result of Theorem \ref{thm:elliptic_div} is much weaker than what Ulmer and Urz{\'u}a get in the special case $Q = O$ (\cite[Thm 1.4]{UU21}). 
There, the authors prove that $\{ n \in \N \st D_{nP} \text{ is not reduced} \}$ is the finite union of arithmetic progressions \emph{starting from 0}, implying that if $D_{P}$ is reduced, then $D_{nP}$ is reduced infinitely often.
In our case, the arithmetic progressions may not start from 0, so Theorem \ref{thm:elliptic_div} does not provide an easy method to check wether $D_{nP, Q}$ is reduced infinitely often.
\end{remark}

\section*{Acknowledgments}
We would like to thank Laura Capuano for many
useful discussions and comments, and Daniel Bertrand for his suggestions for the proof of Lemma \ref{funcTrascLemma_fixed_point}, and David Masser for the interest in this work and for suggesting potential future applications.\\
We also thank Nelson Alvarado, Luca Ferrigno, Federico Pieroni and Roberto Vacca, for helpful discussions.\\
We would also like to thank Harry Schmidt, and the whole Mathematics department of the University of Warwick, for hosting the author during part of the process of writing the present article.\\
The author was supported by the National Group for Algebraic
and Geometric Structures, and their Applications (GNSAGA INdAM).

\bibliographystyle{alpha-abbr3}
\bibliography{biblio}

\end{document}